\documentclass[12pt]{article}
\usepackage[title]{appendix}
\usepackage{subfigure}
\usepackage{float}
\usepackage{pifont}
\usepackage{ulem}
\usepackage{amssymb,amsmath,amsfonts}
\usepackage{mathrsfs}
\usepackage{verbatim}
\usepackage{latexsym}
\usepackage{graphicx}
\usepackage{indentfirst}
\usepackage{color}
\usepackage[dvipsnames]{xcolor}
\usepackage{bm}
\usepackage{microtype}
\usepackage{epstopdf}
\usepackage{calligra}
\usepackage{slashed}
\usepackage[colorlinks=true, linkcolor=blue, citecolor=red]{hyperref}

\makeatletter
\@addtoreset{equation}{section}

\makeatother

\begin{document}

\newtheorem{theorem}{Theorem}[section]
\newtheorem{lemma}[theorem]{Lemma}
\newtheorem{proposition}[theorem]{Proposition}
\newtheorem{remark}[theorem]{Remark}
\newtheorem{corollary}[theorem]{Corollary}
\newtheorem{definition}[theorem]{Definition}
\newtheorem{assumption}[theorem]{Assumption}
\newtheorem{example}[theorem]{Example}

\newcommand{\proof}{\noindent {\it Proof}.}
\def\R{\mathbb{R}}
\def\1{{\bf 1}}
\def\e{\mathrm{e}}
\def\d{\mathrm{d}}
\title{Exponential Contraction for Underdamped Langevin Diffusions with Superlinear Forces}
\author{Shan Huang\thanks{School of Mathematics and Statistics, Lanzhou University, Lanzhou, Gansu 730000, China; School of Mathematics and Statistics, Northeast Normal University, Changchun, Jilin 130024, China.}
\and Xiaoyue Li\thanks{Corresponding author: lixy@tiangong.edu.cn. School of Mathematical Sciences, Tiangong University, Tianjin 300387, China. This author was supported by the National Natural Science Foundation of China (Nos. 12371402 and 11971096), the National Key R\&D Program of China (No. 2020YFA0714102), the Natural Science Foundation of Jilin Province (No. YDZJ202101ZYTS154), and the Fundamental Research Funds for the Central Universities (No. 2412021ZD013).}
}
\date{}
\maketitle
\begin{abstract}
Underdamped Langevin diffusions model kinetic sampling and thermally driven inertial dynamics, but quantitative convergence is difficult when degenerate noise is coupled with a nonconvex superlinear force. We prove unit-prefactor exponential contraction in an explicit weighted Kantorovich cost under radial confinement, a hypocoercive Lyapunov condition, and \(|\nabla U(x)-\nabla U(y)|\leq L_1(1+|x|^\ell+|y|^\ell)|x-y|\), \(0\leq\ell\leq2\). No smallness condition is imposed on \(L_1\). At the critical exponent \(\ell=2\), a quartic lower bound on \(U\) is used together with an anisotropic position--velocity localization to obtain a nonempty endpoint regime. The proof combines regularized reflection--synchronous coupling, an exponential Lyapunov weight, tightness, and an occupation-time argument at the singular coupling direction. The result yields exponential convergence and uniqueness of the invariant Gibbs measure within the finite-cost class. It covers power-law confining potentials below quartic growth and nonconvex quartic Duffing potentials with cubic force.

\medskip \noindent
{\small{\bf Keywords: underdamped Langevin diffusion; reflection coupling; quantitative contraction; superlinear force; ergodicity} }
\end{abstract}
\section{Introduction}

The Langevin equation originates in Langevin's stochastic description of Brownian motion \cite{Langevin1908}, and Kramers later incorporated this model into reaction-rate theory \cite{Kramers1940}. In normalized units, the underdamped Langevin dynamics are
\begin{equation}\label{eq4.1}
	\d X_t=V_t\,\d t,\qquad \d V_t=-\gamma V_t\,\d t-\nabla U(X_t)\,\d t+\sqrt{2\gamma}\,\d B_t,
\end{equation}
where \(\gamma>0\) is the friction parameter, \(B\) is a \(d\)-dimensional Brownian motion, and \(U:\mathbb R^d\to\mathbb R\) is a confining potential. Under the regularity, integrability, and nonexplosion conditions stated below, the dynamics admit the Gibbs probability measure
\[\mu_*(\d x,\d v)=\mathfrak Z^{-1}\exp\!\left(-U(x)-\frac{|v|^2}{2}\right)\d x\,\d v,\qquad \mathfrak Z:=(2\pi)^{\frac d2}\int_{\mathbb R^d}\mathrm e^{-U(x)}\,\d x.\]
Once convergence to equilibrium is established, \eqref{eq4.1} can therefore be used to sample from the spatial density proportional to \(\mathrm e^{-U}\). The equation also provides the continuous-time basis for Hamiltonian Monte Carlo \cite{DuaneEtAl1987,Neal2011}, stochastic-gradient MCMC \cite{MaChenFox2015}, and kinetic Langevin algorithms \cite{ChengEtAl2018,DalalyanRiou2020}. The inclusion of momentum may reduce the random-walk behavior of overdamped schemes, while coupling arguments yield quantitative convergence guarantees for Hamiltonian Monte Carlo beyond globally convex targets \cite{BouRabeeEberleZimmer2020}. These physical and computational applications motivate explicit conditions and rates for convergence of the law of \eqref{eq4.1} to \(\mu_*\).

Two structural features make this convergence problem substantially different from its overdamped counterpart. First, Brownian noise acts directly only on \(V_t\), so the diffusion coefficient has rank \(d\) on the \(2d\)-dimensional phase space. Second, although \(\mu_*\) is invariant, the Markov semigroup is not reversible with respect to \(\mu_*\). Indeed, the velocity Ornstein--Uhlenbeck operator is symmetric and dissipative, whereas the Hamiltonian transport \(v\cdot\nabla_x-\nabla U(x)\cdot\nabla_v\) is antisymmetric. Commutators of velocity derivatives with this transport field generate the missing position directions, which gives H\"ormander hypoellipticity and smooths the transition probabilities \cite{Hormander1967}. Long-time convergence requires a further mechanism: the transport must transfer dissipation from velocity to position. This transfer is the source of hypocoercivity and is also the reason that a useful distance must combine the position and velocity variables rather than treat them independently.

The analytic theory of kinetic Fokker--Planck equations has clarified these mechanisms from several complementary perspectives. H\'erau and Nier \cite{HerauNier2004} proved regularization and exponential convergence for high-degree potentials using hypoelliptic and spectral methods, with estimates relating the kinetic operator to Witten Laplacians. Villani \cite{Villani2009} developed a general hypocoercive framework based on mixed derivatives and modified entropy functionals. Dolbeault, Mouhot and Schmeiser \cite{DMS2015} obtained constructive \(L^2\)-decay estimates through a micro--macro decomposition under explicit coercivity assumptions. More recently, Cao, Lu and Wang \cite{CaoLuWang2023} derived an explicit \(L^2\)-convergence rate under a Poincar\'e inequality for the spatial Gibbs marginal, with refined dependence on the friction parameter. Brigati and Stoltz \cite{BrigatiStoltz2025} used a space--time Poincar\'e--Lions inequality to treat more general kinetic equilibria, including heavy-tailed distributions, and obtained exponential or algebraic decay according to the available functional inequality. These results provide strong norm estimates and a detailed analytic description of hypocoercivity, but they do not directly give the pathwise transportation contraction considered in this paper.

A second approach combines Lyapunov stability with Harris-type recurrence. Wu \cite{Wu2001} established exponential convergence and deviation principles for stochastic damping Hamiltonian systems. Talay \cite{Talay2002} proved exponential convergence for the continuous process and studied the long-time behavior of an implicit Euler discretization. Mattingly, Stuart and Higham \cite{MSH2002} treated locally Lipschitz vector fields and degenerate noise through Lyapunov estimates and minorization/reachability arguments, covering both continuous dynamics and suitable numerical approximations. Hairer and Mattingly \cite{HairerMattingly2011} gave a direct quantitative proof of the classical Harris theorem in Lyapunov-weighted norms, combining a drift condition with localized minorization. Hairer, Mattingly and Scheutzow \cite{HMS2011} developed a weak Harris theorem based on contraction in a distance-like function and also showed that asymptotic coupling may imply uniqueness and convergence without exact finite-time coupling. For the superlinear locally Lipschitz forces studied here, however, a direct quantitative small-set argument would require uniform overlap bounds for transition probabilities on a recurrent compact set. Although such bounds may follow qualitatively from controllability and hypoelliptic regularity, their dependence on the localization radius and on the local regularity constants is difficult to make explicit. Harris theory thus provides the global structure of the argument, but it does not by itself supply a computable local contraction adapted to the kinetic geometry.

Reflection coupling provides such a local contraction mechanism for nondegenerate overdamped diffusions. Eberle \cite{Eberle2016} showed that, when a drift is contractive outside a bounded region but may be noncontractive inside it, reflection coupling can contract a suitably chosen concave transform of the Euclidean distance. Eberle, Guillin and Zimmer \cite{EGZHarris2019} combined this construction with Lyapunov weights to obtain quantitative Harris-type contraction estimates. For kinetic Langevin dynamics, Eberle, Guillin and Zimmer \cite{EGZKinetic2019} introduced a position--velocity difference adapted to the degenerate noise and combined reflection with synchronous coupling. A weighted concave function of the resulting kinetic distance yields explicit contraction when \(\nabla U\) is globally Lipschitz and satisfies an appropriate dissipativity condition at infinity. Related coupling constructions have subsequently been developed for kinetic mean-field equations. Bolley, Guillin and Malrieu \cite{BGM2010} studied convergence to equilibrium and particle approximation for a weakly self-consistent Vlasov--Fokker--Planck equation. Guillin, Le Bris and Monmarch\'e \cite{GBM2022} obtained exponential convergence and uniform-in-time propagation of chaos in nonconvex cases. Schuh \cite{Schuh2024} established global contractivity and uniform propagation of chaos for distribution-dependent forces, while Chen, Lin, Ren and Wang \cite{ChenLinRenWang2024} derived uniform-in-time estimates through functional convexity, entropy, and hypocoercivity. These developments show that quantitative kinetic stability can persist under nonconvexity and mean-field dependence, provided that the force satisfies suitable global regularity or smallness conditions.

The global Lipschitz condition in \cite{EGZKinetic2019} excludes potentials with unbounded Hessian, including many polynomial confinements. In this paper, we replace it with the locally Lipschitz growth condition
\[|\nabla U(x)-\nabla U(y)|\leq L_1\bigl(1+|x|^\ell+|y|^\ell\bigr)|x-y|,\qquad x,y\in\mathbb R^d,\qquad 0\leq\ell\leq2.\]
Thus, on a position ball of radius \(R\), the Lipschitz coefficient of the force may grow like \(R^\ell\). We additionally assume radial confinement and a potential--dissipation inequality that relates \(\langle\nabla U(x),x\rangle\) to \(U(x)\). Our contraction theorem covers every \(0\leq\ell<2\) without any smallness restriction on \(L_1\). At \(\ell=2\), we assume in addition that \(U\) has a quartic lower bound; this endpoint condition is satisfied by quartic Duffing potentials and again places no smallness restriction on \(L_1\). For the power-law potential \(U(x)=|x|^p-|x|^2+C_U\), the local Lipschitz modulus of \(\nabla U\) has order \(|x|^{p-2}\). Hence \(2<p<4\) corresponds to \(0<\ell<2\), whereas quartic confinement with cubic force corresponds to \(\ell=2\).

The principal difficulty is not the local existence of the dynamics, but the interaction between localization and the coupling geometry. Let \(L_R:=L_1(1+2R^\ell)\) denote the local Lipschitz coefficient when both position variables have norm below \(R\). A direct adaptation of a globally Lipschitz kinetic coupling would allow \(L_R\) to determine the coefficients of the kinetic distance and would enlarge the region on which its concave transform must be constructed. We avoid this feedback by fixing the geometric coefficients. For two phase-space points \((x,v)\) and \((x',v')\), set
\[z:=x-x',\qquad y:=v-v',\qquad q:=2z+\gamma^{-1}y,\qquad r:=|z|+|q|.\]
The coefficient \(2\) in \(q\) is fixed throughout the paper, so no radius-dependent parameter enters the definition of \(r\). The local coefficient \(L_R\) appears only in the concavity profile used to construct the distance. Below the endpoint we localize both position and velocity at scale \(R\). At \(\ell=2\), we instead use position scale \(R\) and velocity scale \(R^2\). More precisely, the concavity profile contains
\[\Lambda_R:=2\vee\frac{L_R}{\gamma^2},\qquad h_R(s):=\frac{\Lambda_R\gamma^2}{8}s^2+\eta_{1,R}\gamma s.\]
At the fixed local scale used in the Lyapunov comparison, \(h_R\) grows strictly slower than the available Lyapunov exponent. For \(\ell<2\), the loss has order \(R^\ell+R\) and is absorbed by quadratic growth. At \(\ell=2\), it has order \(R^2\), while a point leaving the anisotropic localization set has either quartic potential energy or velocity of order \(R^2\), so \(\mathcal V\) has order \(R^4\). This is the mechanism that makes the critical regime nonempty.

The global part of the construction is provided by a hypocoercive Lyapunov function. We define
\[\mathcal V(x,v):=\frac18\!\left(\!2U(x)+\frac{\gamma^2}{2}|x|^2+\gamma\langle x,v\rangle+|v|^2\!\right)\!, F(x,v):=\mathrm e^{\mathcal V(x,v)}\!, G(x,v):=a\bigl(1+\mathcal V(x,v)\bigr)\!,\]
where \(a>0\) is chosen explicitly from the parameters in the assumptions. The mixed term \(\langle x,v\rangle\) transfers velocity dissipation to the position variable. We prove that there exists a finite constant \(C>0\) such that \(\mathcal LF\leq C-FG\). The factor \(G\), which grows linearly in \(\mathcal V\), is essential: when at least one copy of the process leaves \(S_R\), the negative quantity \(-FG\) absorbs the force difference and the positive contributions produced by the kinetic distance. Inside \(S_R\), we use the weighted cost
\[\rho_R\bigl((x,v),(x',v')\bigr):=f_R(r)\left(1+\varepsilon_RF(x,v)+\varepsilon_RF(x',v')\right),\]
where \(\varepsilon_R>0\) is explicit and \(f_R\) is increasing, concave, and constant beyond the local coupling diameter. Its second derivative is chosen so that the negative contribution generated by reflection controls the \(L_Rr\)-term from the force difference. The same function also contains a second concavity contribution on a smaller scale, which compensates for the bounded positive part of \(2C-FG-F'G'\) when both Lyapunov weights remain moderate.

A further difficulty is that the region on which reflection is active may intersect the set \(q=\mathbf0\) away from the diagonal. Indeed, reflection is fully active when both copies lie in \(S_{R,\delta}\) and \(|z|^2+|y|^2\geq\delta^2\). These conditions do not exclude \(q=2z+\gamma^{-1}y=\mathbf0\), so the direction \(q/|q|\) is singular inside the active region. We resolve this problem through a regularized weak-limit construction inspired by \cite{EberleZimmer2019}. For \(n\geq2\), let \(\chi_n\) vanish on \([0,n^{-1}]\) and equal one on \([2n^{-1},\infty)\), and define
\[\mathbf e_n(q):=\begin{cases}\chi_n(|q|)q/|q|,&q\neq\mathbf0,\\ \mathbf0,&q=\mathbf0,\end{cases}\qquad \alpha_{n,\beta}:=1-n^{-\beta},\qquad \mathsf R_{n,\beta}(q):=\mathbf I_d-2\alpha_{n,\beta}\mathbf e_n(q)\mathbf e_n(q)^{\mathrm T},\]
where \(0<\beta<2\). The matrix \(\mathsf R_{n,\beta}\) modifies only the radial component in the direction \(\mathbf e_n(q)\) and leaves its orthogonal complement synchronous. Since \(\mathsf R_{n,\beta}\) is not orthogonal, we introduce the covariance correction
\[\Sigma_{n,\beta}(q):=\left(\mathbf I_d-\mathsf R_{n,\beta}(q)\mathsf R_{n,\beta}(q)^{\mathrm T}\right)^{\frac12},\qquad \mathsf R_{n,\beta}\mathsf R_{n,\beta}^{\mathrm T}+\Sigma_{n,\beta}\Sigma_{n,\beta}^{\mathrm T}=\mathbf I_d.\]
An additional independent Brownian motion carrying the coefficient \(\Sigma_{n,\beta}\) ensures that both coordinates of every regularized coupling have exactly the Langevin law. The error relative to ideal radial reflection satisfies
\[1-\alpha_{n,\beta}|\mathbf e_n(q)|^2\leq n^{-\beta}+\mathbf1_{\{|q|<2/n\}}.\]
The first quantity converges to zero uniformly. The second is supported in a shrinking tube around the singular set. Tightness of the regularized path laws produces a weak path-space coupling. On the portion of the singular set that remains after the deterministic estimates are applied, at least one scalar component of the drift of the limiting \(q\)-process is bounded away from zero. A finite covering argument selects these components locally, and the scalar occupation-time formula shows that the limiting process spends zero Lebesgue time there. This removes the shrinking-tube contribution as \(n\) tends to infinity.

The drift estimate for \(\rho_R\) is organized by an exhaustive four-case classification. If at least one coordinate lies outside \(S_R\), the inequality \(\mathcal LF\leq C-FG\) controls the complete finite-variation contribution. If both coordinates lie in \(S_R\), but at least one lies outside \(S_{R,\delta}\), comparison with a boundary point of \(S_R\) reduces the estimate to the first case, up to an error that vanishes with \(\delta\). If both coordinates lie in \(S_{R,\delta}\) and \(|z|^2+|y|^2<\delta^2\), the cost and its finite-variation contribution are both of order \(\delta\). Finally, if both coordinates lie in \(S_{R,\delta}\) and \(|z|^2+|y|^2\geq\delta^2\), reflection is fully active and the defining differential inequality for \(f_R\) gives the required negative contribution. The only additional quantity in this last case is the shrinking-tube occupation term described above. Passing successively to the limits \(n\to\infty\) and \(\delta\downarrow0\) yields, for every explicitly admissible \(R\geq R_\star\),
\[\mathcal W_{\rho_R}(\mu P_t,\nu P_t)\leq\mathrm e^{-ct}\mathcal W_{\rho_R}(\mu,\nu),\qquad t\geq0,\qquad c:=1\wedge\eta_{5,R}>0,\]
where \(R_\star\), \(\eta_{5,R}\), and \(\rho_R\) are constructed explicitly in Section~\ref{Sec2}. The prefactor is exactly one. The estimate implies uniqueness of the Gibbs measure in the corresponding finite-cost class and exponential convergence of every initial law in that class.

The absence of a smallness condition on \(L_1\) is a substantive distinction from the one-particle specialization of \cite{GBM2022}. In that work, after the interaction is set to zero, the locally Lipschitz extension still requires the coefficient \(L_\psi\) to satisfy \cite[(C.2)]{GBM2022}. In contrast, our fixed kinetic geometry places the radius-dependent Lipschitz coefficient only in the concavity profile, and the endpoint localization separates the quadratic local modulus from quartic Lyapunov growth. Consequently, power-law confining potentials \(U(x)=|x|^p-|x|^2+C_U\), \(2<p<4\), are covered for arbitrary \(L_1\), while quartic potentials are covered under the explicit lower bound in Assumption \ref{as4.4}, also for arbitrary \(L_1\). Section~\ref{Sec6} verifies all assumptions for the nonconvex bistable Duffing oscillator with cubic force and presents numerical illustrations of coupling, equilibrium density, and moment relaxation. The simulations illustrate the theorem-covered dynamics but do not claim to resolve the conservative theoretical rate.

The remainder of the paper is organized as follows. Section~\ref{Sec2} states the assumptions, introduces the quantitative constants and weighted transportation cost, and presents the contraction and ergodicity results. Section~\ref{Sec3} constructs the regularized reflection--synchronous couplings and identifies their weak path-space limits. Section~\ref{Sec4} establishes the Lyapunov, localization, and regularized coupling estimates required by the proof. Section~\ref{Sec5} removes the singular-set error, proves the contraction theorem, and derives its consequences for arbitrary initial laws. Section~\ref{Sec6} treats the bistable Duffing oscillator and presents the numerical illustrations.

\section{Assumptions and Main Results}\label{Sec2}
This section fixes the notation and assumptions, introduces the quantitative constants and transportation cost, and states the contraction and ergodicity results used throughout the paper.
\subsection{Notation and Assumptions}
Throughout, $\mathbb{N}$ denotes the set of nonnegative integers, while $\mathbb R_+:=(0,\infty)$ denotes the strictly positive reals. For $a,b\in\mathbb R$, write $a\wedge b:=\min\{a,b\}$, $a\vee b:=\max\{a,b\}$, and $a^+:=a\vee0$. We denote the indicator of a set $A$ by $\1_A$, the
Euclidean norm by $|\cdot|$, the matrix operator norm by $\|\cdot\|_{\rm op}$, and matrix transpose by ${\mathrm T}$. The symbol $C$
denotes a finite positive constant whose value may change from line to line. For symmetric matrices, $A\preceq B$ means that $B-A$ is positive semidefinite. For a topological space $S$, $C_b(S)$ denotes the bounded continuous real-valued functions on $S$, and $\delta_\xi$ denotes the Dirac probability measure at $\xi$.
The \(d\times d\) identity matrix is denoted by \(\mathbf I_d\), and
\[\mathbb S_d^+:=\{A\in\mathbb R^{d\times d}:A^{\mathrm T}=A,\ y^{\mathrm T}Ay\geq0\ \text{for every }y\in\mathbb R^d\}.\]
For $p\in\mathbb{N}$, a function $f:\mathbb{R}^d\to\mathbb{R}$ is said to belong to
$C^p(\mathbb{R}^d;\mathbb{R})$ if, for every multi-index
$\alpha=(\alpha_1,\ldots,\alpha_d)\in\mathbb{N}^d$ with \(|\alpha|:=\alpha_1+\cdots+\alpha_d\le p\),
the partial derivative \(D^\alpha f(x)\)
exists for all $x\in\mathbb{R}^d$ and the map $x\mapsto D^\alpha f(x)$ is continuous on
$\mathbb{R}^d$. For $f\in C^1(\mathbb{R}^d;\mathbb{R})$, its gradient is denoted by
$\nabla f$ and is given by
\[\nabla f(x)
=
\left(
\frac{\partial f}{\partial x_1}(x),
\frac{\partial f}{\partial x_2}(x),
\ldots,
\frac{\partial f}{\partial x_d}(x)
\right)^{\mathrm T}.\]
For $f\in C^2(\mathbb{R}^d;\mathbb{R})$, its Laplacian is denoted by $\Delta f$ and is
defined by
\[\Delta f(x)
=
\sum_{i=1}^d \frac{\partial^2 f}{\partial x_i^2}(x).\]

Let \((\Omega,\mathcal{F},\mathbb{P},\{\mathcal{F}_t\}_{t\ge0})\)
be a complete filtered probability space satisfying the usual conditions, namely,
$\{\mathcal{F}_t\}_{t\ge0}$ is right-continuous and $\mathcal{F}_0$ contains all
$\mathbb{P}$-null sets. Let $\{B_t\}_{t\ge0}$ be a $d$-dimensional
$\{\mathcal{F}_t\}_{t\ge0}$-Brownian motion.
Let \(E:=\mathbb R^d\times\mathbb R^d\) be the phase space, and let $\mathcal{P}(E)$ denote the space of probability measures on \(E\). For $\mu,\nu\in\mathcal{P}(E)$, define
\begin{equation*}
\mathcal{W}_{\varrho}(\mu,\nu)
:=
\inf_{\Gamma\in\Pi(\mu,\nu)}
\int_{E\times E}
\varrho(\xi,\xi')\,\Gamma(\mathrm{d}\xi,\mathrm{d}\xi'),
\end{equation*}
where $\xi=(x,v)$ and $\xi'=(x',v')$. Here $\Pi(\mu,\nu)$ denotes the set of all couplings of $\mu$ and $\nu$, that is, all probability measures on $E\times E$ with first and second marginals $\mu$ and $\nu$, respectively. The function \(\varrho:E\times E\to[0,\infty)\)
is assumed to be measurable.

Let $(X_t,V_t)_{t\ge0}$ be the solution to \eqref{eq4.1}. The associated transition
semigroup $\{P_t\}_{t\ge0}$ is defined as follows. For any bounded Borel measurable
function $g:\mathbb{R}^d\times\mathbb{R}^d\to\mathbb{R}$ and any initial condition
$(x,v)\in\mathbb{R}^d\times\mathbb{R}^d$, set
\[(P_tg)(x,v)
:=
\mathbb{E}^{(x,v)}\big[g(X_t,V_t)\big],\quad t\geq 0,\]
where $\mathbb{E}^{(x,v)}$ denotes expectation for the process starting from
$(X_0,V_0)=(x,v)$. If $\nu$ is a probability measure on
$\mathbb{R}^d\times\mathbb{R}^d$, then $\nu P_t$ denotes the distribution of
$(X_t,V_t)$ at time $t$ when the initial distribution is $\nu$; equivalently,
\[(\nu P_t)(A)
=
\int_{\mathbb{R}^d\times\mathbb{R}^d}
P_t\mathbf{1}_A(x,v)\,\nu(\mathrm{d}x,\mathrm{d}v),
\qquad
A\in\mathcal{B}(\mathbb{R}^d\times\mathbb{R}^d).\]
For \(h\in C^2(\mathbb R^{2d};\mathbb R)\), the generator of \eqref{eq4.1} is
\begin{equation}\label{eq4.3}
\mathcal Lh=\gamma\Delta_vh-\gamma\langle v,\nabla_vh\rangle-\langle\nabla U,\nabla_vh\rangle+\langle v,\nabla_xh\rangle.
\end{equation}

We now impose the structural assumptions on the potential \(U\). These assumptions are
designed to ensure both the existence of an invariant probability measure and a
quantitative Lyapunov structure at infinity.
\begin{assumption}\label{as4.0}
The potential $U$ belongs to $C^1(\mathbb{R}^d;\mathbb{R})$ and satisfies
\[\lim_{|x|\to\infty} U(x)=+\infty,
\qquad
\int_{\mathbb{R}^d} \mathrm{e}^{-U(x)}\,\mathrm{d}x<\infty.\]
\end{assumption}

\begin{remark}
The Gibbs measure introduced in the Introduction has density \(p_*(x,v)=\mathfrak Z^{-1}\exp(-U(x)-|v|^2/2)\). The formal adjoint of \(\mathcal L\) in \eqref{eq4.3}, acting on smooth densities, is
\[\mathcal L^*p=\gamma\Delta_vp+\gamma\nabla_v\!\cdot(vp)+\nabla_v\!\cdot(p\nabla U)-\nabla_x\!\cdot(vp).\]
Direct differentiation gives \(\mathcal L^*p_*=0\) in the distributional sense; equivalently, integration by parts gives \(\int_E\mathcal L\varphi\,\mathrm d\mu_*=0\) for every \(\varphi\in C_c^\infty(E)\). Together with the global well-posedness and nonexplosion established below, the standard invariance criterion therefore yields \(\mu_*P_t=\mu_*\); see, for example, \cite[Proposition 6.1]{P2014} and \cite{EGZKinetic2019}. Uniqueness within the finite-\(\rho_R\)-cost class follows from Corollary \ref{Co1}.
\end{remark}

\begin{assumption}\label{as4.1}
There exist constants $L_1>0$ and $0\leq\ell\leq2$ such that
\[|\nabla U(x)-\nabla U(y)|
\le
L_1\bigl(1+|x|^\ell+|y|^\ell\bigr)|x-y|,
\qquad x,y\in\mathbb{R}^d.\]
\end{assumption}

\begin{remark}\label{re4.1}
Set \(C_1:=|\nabla U(\mathbf0)|+3L_1\). Assumption \ref{as4.1} then implies that, for every \(x\in\mathbb{R}^d\),
\[|\nabla U(x)|
\le |\nabla U(\mathbf 0)|+L_1(2+|x|^\ell)|x|
\le C_1(1+|x|^{\ell+1}).\]
\end{remark}

We impose a coercivity condition at infinity. This condition provides the
dissipative mechanism needed to control the kinetic energy in the Lyapunov estimates
below.
\begin{assumption}\label{as4.2}
Assume that \(U(x) \geq 0\) for all \(x \in \mathbb{R}^d\), and that there exist constants \(K_1>0,~L_2 > \gamma^2/4\) such that
\begin{equation*}
\langle \nabla U(x), x \rangle \geq L_2 |x|^2,
\quad \forall\, x \in \mathbb{R}^d \ \text{with } |x| \geq K_1.
\end{equation*}
\end{assumption}

For the Lyapunov weight used below, it is also necessary to relate the radial
dissipation of the potential to the potential itself. We therefore assume the following
mild growth-dissipation condition.
\begin{assumption}\label{as4.3}
There exist constants \(L_3>0\) and \(K_2>0\) such that
\[\langle\nabla U(x),x\rangle\ge L_3 U(x)-K_2,
\qquad x\in\mathbb{R}^d.\]
\end{assumption}

At the critical exponent \(\ell=2\), the quadratic growth of the local Lipschitz modulus must be compared with a coercive term of strictly higher order. We impose the following condition only in that endpoint case.
\begin{assumption}\label{as4.4}
If \(\ell=2\), there exist constants \(L_4>0\) and \(K_3\geq0\) such that
\begin{equation}\label{quartic-lower}
U(x)\geq L_4|x|^4-K_3,\qquad x\in\mathbb R^d.
\end{equation}
No additional condition is imposed by this assumption when \(0\leq\ell<2\).
\end{assumption}

\begin{remark}\label{rem:endpoint-confinement}
Assumption \ref{as4.4} matches the critical scaling in Assumption \ref{as4.1}: when \(\ell=2\), the force-difference coefficient grows quadratically in the position variables, whereas \eqref{quartic-lower} supplies quartic growth in the Lyapunov function. The anisotropic localization introduced below uses a position radius \(R\) and a velocity radius \(R^2\); consequently, the terms generated by the local Lipschitz estimate have order \(R^2\), while either the potential energy or the velocity contribution to \(\mathcal V\) has order \(R^4\). This strict order separation is the mechanism that closes the critical argument without imposing a smallness condition on \(L_1\).
\end{remark}

\begin{remark}\label{rem:assumption-comparison}
Assumptions \ref{as4.1} and \ref{as4.2} may be compared with \cite[Assumption~2.7]{EGZKinetic2019}, which assumes the existence of \(L_{\rm E},R_{\rm E},K_{\rm E}>0\) such that
\[\begin{aligned}
&U(\mathbf0)=0=\min_{x\in\mathbb R^d}U(x),\qquad |\nabla U(x)-\nabla U(y)|\leq L_{\rm E}|x-y|,\\
&\langle\nabla U(x),x\rangle\geq\frac{K_{\rm E}}{R_{\rm E}^2}|x|^2,\qquad |x|\geq R_{\rm E}.
\end{aligned}\]
These conditions imply \(K_{\rm E}\leq L_{\rm E}R_{\rm E}^2\); see \cite{EGZKinetic2019}. In contrast, Assumption \ref{as4.2} imposes coercivity only at infinity, together with the normalization \(U\geq0\). Combined with Assumption \ref{as4.1}, it permits polynomial growth of \(\nabla U\) and does not require global Lipschitz continuity.
\end{remark}

\begin{remark}\label{rem:assumption-examples}
A broad class of confining potentials with nonlinear gradients satisfies Assumptions \ref{as4.1}--\ref{as4.3}. Typical examples are
\[U(x)=|x|^2\bigl(\ln(1+|x|)-1\bigr)+1,\qquad U(x)=|x|^p-|x|^2+C_U,\quad 2<p<4,\]
where \(C_U\) is sufficiently large that \(U\geq0\). For the second potential, Assumption \ref{as4.1} holds with \(\ell=p-2\). To verify the assertions for \(U(x)=|x|^2(\ln(1+|x|)-1)+1\), write \(r=|x|\). Direct differentiation gives
\[\nabla U(x)=\left[2\bigl(\ln(1+|x|)-1\bigr)+\frac{|x|}{1+|x|}\right]x.\]
For \(x\neq\mathbf0\),
\[\nabla^2U(x)=\left[2\bigl(\ln(1+|x|)-1\bigr)+\frac{|x|}{1+|x|}\right]\mathbf I_d+\frac{2|x|+3}{(1+|x|)^2}\frac{xx^{\mathrm T}}{|x|},\]
where the second summand is defined to be the zero matrix at \(x=\mathbf0\). Since \(\|xx^{\mathrm T}/|x|\|_{\rm op}=|x|\),
\[\|\nabla^2U(x)\|_{\rm op}\leq2\bigl(1+\ln(1+|x|)\bigr)+3\leq5\bigl(1+\ln(1+|x|)\bigr).\]
The mean-value theorem therefore yields
\[|\nabla U(x)-\nabla U(y)|\leq5\bigl(1+\ln(1+|x|+|y|)\bigr)|x-y|.\]
Because the logarithm grows more slowly than every positive power, Assumption \ref{as4.1} follows for every fixed \(\ell>0\), after increasing \(L_1\). It remains to verify Assumptions \ref{as4.2} and \ref{as4.3}. First, \(U\geq0\): for \(0\leq r\leq1\), \(r^2(\ln(1+r)-1)+1\geq1-r^2\geq0\); for \(1\leq r\leq\mathrm e-1\),
\[r^2(\ln(1+r)-1)+1\geq1-(\mathrm e-1)^2(1-\ln2)>0;\]
and for \(r\geq\mathrm e-1\), the same expression is at least \(1\). Moreover,
\[\langle\nabla U(x),x\rangle=|x|^2\left[2\bigl(\ln(1+|x|)-1\bigr)+\frac{|x|}{1+|x|}\right].\]
The bracket tends to \(+\infty\) as \(|x|\) tends to infinity. Hence, for every \(L_2>\gamma^2/4\), there is \(K_1>0\) such that
\[2\bigl(\ln(1+|x|)-1\bigr)+\frac{|x|}{1+|x|}\geq L_2,\qquad |x|\geq K_1,\]
which proves Assumption \ref{as4.2}. Finally,
\[\lim_{|x|\to\infty}\bigl(\langle\nabla U(x),x\rangle-U(x)\bigr)=\lim_{|x|\to\infty}|x|^2\left(\ln(1+|x|)-1+\frac{|x|}{1+|x|}\right)-1=+\infty.\]
The continuous function on the left is bounded below, so Assumption \ref{as4.3} holds with \(L_3=1\) and a suitable \(K_2>0\).
\end{remark}

\subsection{Quantitative Framework}
We now define the Lyapunov constants, localization radii, concavity profile, and weighted transportation cost.

To make the dependence and order of the constants transparent, we use \(C_i\) only for positive constants determined by \(d,\gamma\) and the constants in Assumptions \ref{as4.0}--\ref{as4.3}; the subscripts follow their order of definition. The constants \(\theta,\lambda_\theta,a,c_0\) retain separate symbols because they determine the Lyapunov geometry. Define the first three in their order of dependence by
\begin{equation}\label{a}
\begin{aligned}
\theta&:=\frac{1}{2}\left(1-\frac{\gamma^2}{4L_2}\right),\qquad \lambda_\theta:=\min\left\{(1-\theta)L_2-\frac{\gamma^2}{4},1-\frac{\gamma^2}{4(1-\theta)L_2}\right\},\\
a&:=\frac{\gamma\min\{4\theta L_3,2\lambda_\theta,1\}}{8\max\{\gamma^2,8\}},
\end{aligned}
\end{equation}
and
\begin{equation*}
c_0:=\frac{1}{8}\min\left\{\frac{\gamma^2}{6},\frac{1}{4}\right\}.
\end{equation*}
For \(s,t\geq0\), set
\[\Psi(s,t):=\begin{cases}0,&t\leq s,\\ t-s,&s<t<2s,\\ s\exp\!\left(\frac{t}{s}-2\right),&t\geq2s.\end{cases}\]
Recall that \(C_1=|\nabla U(\mathbf0)|+3L_1\) was defined in Remark \ref{re4.1}. We next define, in order,
\begin{equation}\label{global-C-actual}
\begin{aligned}
C_2&:=(1-\theta)K_1C_1(1+K_1^{\ell+1})+\left(\frac{3\gamma^2}{8}+\frac{1}{4}\right)K_1^2+2d,\\
C_3&:=\frac{1}{4}\min\{4\theta L_3,2\lambda_\theta,1\}+C_2+\theta K_2,\qquad C_4:=\frac{\gamma C_3}{8},\qquad C_5:=a\vee\Psi(a,C_4).
\end{aligned}
\end{equation}
The constant \(C_5\geq a>0\) will be the bounded remainder in the Foster--Lyapunov estimate. Throughout the paper, the admissible growth regime is
\begin{equation}\label{growth-regime}
0\leq\ell<2\ \text{with arbitrary }L_1>0,\qquad\text{or}\qquad \ell=2\ \text{under Assumption \ref{as4.4}}.
\end{equation}
In particular, no smallness condition is imposed on \(L_1\) in either branch of \eqref{growth-regime}. At \(\ell=2\), the quartic lower bound \eqref{quartic-lower} and the anisotropic localization below provide the two endpoint absorptions used in Section~4.

We now define the Lyapunov functions, localization radii, and distance parameters in their dependency order. Let
\begin{equation}\label{lyapunov-functions}
\begin{aligned}
\mathcal V(x,v)&:=\frac{1}{8}\left(2U(x)+\frac{\gamma^2}{2}|x|^2+\gamma\langle x,v\rangle+|v|^2\right),\\
F(x,v)&:=\mathrm e^{\mathcal V(x,v)},\qquad G(x,v):=a(1+\mathcal V(x,v)).
\end{aligned}
\end{equation}
Define the compact localization set and its first radius by
\begin{equation}\label{radius-1}
\begin{aligned}
\mathcal H&:=\left\{\bigl((x,v),(x',v')\bigr)\in E\times E:F(x,v)G(x,v)+F(x',v')G(x',v')\leq2C_5\right\},\\
R_1&:=\sqrt{c_0^{-1}\log\!\left(\frac{2C_5}{a}\right)}.
\end{aligned}
\end{equation}
For \(R\geq1\), define the velocity localization scale and the corresponding open set by
\begin{equation}\label{SRset}
R_{\rm v}(R):=\begin{cases}R,&0\leq\ell<2,\\ R^2,&\ell=2,\end{cases}\qquad S_R:=\{(x,v)\in E:|x|<R,\ |v|<R_{\rm v}(R)\}.
\end{equation}
Thus the localization is isotropic below the endpoint and has position--velocity scales \((R,R^2)\) at \(\ell=2\). Define
\begin{equation}\label{eq4.5}
L_R:=L_1(1+2R^\ell),\qquad \Lambda_R:=2\vee\frac{L_R}{\gamma^2}.
\end{equation}
The kinetic distance is
\begin{equation}\label{rxv}
r\bigl((x,v),(x',v')\bigr):=|x-x'|+\left|2(x-x')+\gamma^{-1}(v-v')\right|.
\end{equation}
\begin{remark}\label{rem211}
Let \(z=x-x'\), \(y=v-v'\), \(q=2z+\gamma^{-1}y\), and \(r_1=|z|+|q|\). If \((x,v),(x',v')\in S_R\), then Assumption \ref{as4.1} and \eqref{eq4.5} give
\[2\gamma|q|+\gamma^{-1}|\nabla U(x)-\nabla U(x')|\leq\gamma\Lambda_Rr_1.\]
Indeed, \(2\leq\Lambda_R\) controls the first summand, while \(L_R/\gamma^2\leq\Lambda_R\) controls the second.
\end{remark}
When \(\ell=2\), set
\begin{equation}\label{endpoint-B1}
B_1:=2\gamma+1+\frac{L_1}{\gamma}+\frac{aK_3}{24}+\frac{24L_1^2}{a\gamma^2L_4}.
\end{equation}
In the same endpoint branch, set \(\lambda_\star:=c_0\wedge(L_4/4)>0\).
The second localization radius is
\begin{equation}\label{radius-2}
R_2:=\begin{cases}1\vee\sqrt{\frac{12(2\gamma+1)}{ac_0}}\vee\left(\frac{36L_1}{ac_0\gamma}\right)^{\frac{1}{2-\ell}},&0\leq\ell<2,\\[3pt]1\vee\sqrt{\frac{12(B_1-a/6)^+}{ac_0}},&\ell=2.\end{cases}
\end{equation}
For \(R>0\), introduce the local scales in their order of use:
\begin{equation}\label{eq4.7}
\eta_{1,R}:=\frac{\sqrt2(\gamma\vee2)}{4}\sqrt{R^2+R_{\rm v}(R)^2},\qquad \eta_2:=2R_1(3+\gamma^{-1}).
\end{equation}
For \(s\geq0\), define
\begin{equation}\label{Seeq1}
h_R(s):=\frac{\Lambda_R\gamma^2}{8}s^2+\eta_{1,R}\gamma s,\qquad \phi_R(s):=\mathrm e^{-h_R(s)},\qquad \Phi_R(s):=\int_0^s\phi_R(u)\,\mathrm du,
\end{equation}
followed by
\begin{equation}\label{J1-definition}
J_{1,R}:=\gamma\int_0^{\eta_2}\frac{\Phi_R(s)}{\phi_R(s)}\,\mathrm ds,
\end{equation}
and
\begin{equation}\label{eq4.8}
\eta_{3,R}:=J_{1,R}^{-1}\wedge\frac{\gamma}{32}\phi_R(\eta_2).
\end{equation}
The constants needed for the third radius are
\begin{equation*}
C_6:=\frac{\gamma\eta_2^2}{2}\vee\frac{32}{\gamma},\qquad C_7:=\log(1+2C_5C_6),
\end{equation*}
and
\begin{equation*}
A_1:=\frac{\gamma^2\eta_2^2}{4}+\frac{L_1\eta_2^2}{8},\qquad A_2:=\frac{\gamma(\gamma\vee2)\eta_2}{2},\qquad A_3:=\frac{L_1\eta_2^2}{4}.
\end{equation*}
Define
\begin{equation}\label{radius-3}
R_3:=\begin{cases}1\vee\sqrt{\frac{4(C_7+A_1)}{c_0}}\vee\frac{4A_2}{c_0}\vee\left(\frac{4A_3}{c_0}\right)^{\frac{1}{2-\ell}},&0\leq\ell<2,\\[5pt]1\vee\sqrt{\frac{A_2+A_3+\sqrt{(A_2+A_3)^2+4\lambda_\star(C_7+A_1+K_3/4)}}{2\lambda_\star}},&\ell=2,\end{cases}
\end{equation}
Define the fourth and fifth radii by
\begin{equation}\label{radius-45}
R_4:=\frac{\eta_2}{2(3+\gamma^{-1})},\qquad R_5:=\sqrt{c_0^{-1}\log\!\left(\frac{4C_5}{a}\right)}.
\end{equation}
The final localization radius is
\begin{equation}\label{Rstar-explicit}
R_\star:=1+\max\{R_1,R_2,R_3,R_4,R_5\}.
\end{equation}
For every \(R\geq R_\star\), define the fourth and fifth local scales by
\begin{equation}\label{RD}
\eta_{4,R}:=6R+2\gamma^{-1}R_{\rm v}(R),\qquad J_{2,R}:=\gamma\int_0^{\eta_{4,R}}\frac{\Phi_R(s)}{\phi_R(s)}\,\mathrm ds,
\end{equation}
and
\begin{equation}\label{Seeq2}
\eta_{5,R}:=J_{2,R}^{-1}\wedge\frac{\gamma}{32}\phi_R(\eta_{4,R}).
\end{equation}
The inequality \(R>R_4\) gives \(\eta_2<\eta_{4,R}\). Define
\begin{equation}\label{Seeq3}
g_R(r):=1-\frac{\eta_{5,R}\gamma}{4}\int_0^{r\wedge\eta_{4,R}}\frac{\Phi_R(s)}{\phi_R(s)}\,\mathrm ds-\frac{\eta_{3,R}\gamma}{4}\int_0^{r\wedge\eta_2}\frac{\Phi_R(s)}{\phi_R(s)}\,\mathrm ds,
\end{equation}
and
\begin{equation}\label{f}
f_R(r):=\int_0^{r\wedge\eta_{4,R}}\phi_R(s)g_R(s)\,\mathrm ds,\qquad r\geq0.
\end{equation}
For \(0\leq r\leq\eta_{4,R}\), the first term subtracted from \(1\) in \eqref{Seeq3} is bounded, by \eqref{RD} and \eqref{Seeq2}, as
\[\frac{\eta_{5,R}\gamma}{4}\int_0^{r\wedge\eta_{4,R}}\frac{\Phi_R(s)}{\phi_R(s)}\,\mathrm ds\leq\frac{\eta_{5,R}J_{2,R}}{4}\leq\frac{1}{4}.\]
Likewise, \eqref{J1-definition} and \eqref{eq4.8} give
\[\frac{\eta_{3,R}\gamma}{4}\int_0^{r\wedge\eta_2}\frac{\Phi_R(s)}{\phi_R(s)}\,\mathrm ds\leq\frac{\eta_{3,R}J_{1,R}}{4}\leq\frac{1}{4}.\]
Thus \eqref{Seeq3} yields \(1/2\leq g_R(r)\leq1\) on \([0,\eta_{4,R}]\). Since \(\phi_R\) is decreasing, \eqref{f} then implies, for \(0\leq r\leq\eta_{4,R}\),
\begin{equation}\label{fP}
r\phi_R(\eta_{4,R})\leq\Phi_R(r)\leq2f_R(r)\leq2\Phi_R(r)\leq2r.
\end{equation}
The function \(f_R\) is continuously differentiable on \((0,\eta_{4,R})\), constant on \([\eta_{4,R},\infty)\), and twice continuously differentiable on \((0,\eta_2)\cup(\eta_2,\eta_{4,R})\). On the latter two intervals, \eqref{Seeq1}, \eqref{Seeq3}, and \eqref{f} give
\begin{equation}\label{fe}
\begin{aligned}
f_R''(r)&=\phi_R'(r)g_R(r)+\phi_R(r)g_R'(r)\\
&=-h_R'(r)f_R'(r)-\frac{\eta_{5,R}\gamma}{4}\Phi_R(r)-\frac{\eta_{3,R}\gamma}{4}\Phi_R(r)\1_{(0,\eta_2)}(r)\\
&=-\left(\frac{\Lambda_R\gamma^2}{4}r+\eta_{1,R}\gamma\right)f_R'(r)-\frac{\eta_{5,R}\gamma}{4}\Phi_R(r)-\frac{\eta_{3,R}\gamma}{4}\Phi_R(r)\1_{(0,\eta_2)}(r).
\end{aligned}
\end{equation}
Every term on the last line of \eqref{fe} is nonpositive. Hence \(f_R''\leq0\) on both intervals; \(f_R'\) is continuous at \(\eta_2\) and has a downward jump to zero at \(\eta_{4,R}\). Therefore \(f_R\) is concave on \([0,\infty)\).

Let \(f'_{R,-}\) denote the left derivative. Extend \(f_R\) to \(\mathbb R\) by \(f_R(r)=r\) for \(r<0\). Since \eqref{Seeq1}, \eqref{Seeq3}, and \eqref{f} give \(f_R'(0+)=\phi_R(0)g_R(0)=1\), this extension is continuously differentiable at zero. Its distributional second derivative \(\mu_{f_R}\) is characterized by
\[\mu_{f_R}([r,s))=f'_{R,-}(s)-f'_{R,-}(r),\qquad r<s.\]
On each interval on which \(f_R\) is twice continuously differentiable, \(\mu_{f_R}(\mathrm dx)=f_R''(x)\,\mathrm dx\), and \(\mu_{f_R}((-\infty,0]\cup(\eta_{4,R},\infty))=0\). The continuity of \(f_R'\) at \(\eta_2\) gives
\begin{equation}\label{R1}
\mu_{f_R}(\{\eta_2\})=\lim_{\varepsilon\downarrow0}\mu_{f_R}([\eta_2,\eta_2+\varepsilon))=\lim_{\varepsilon\downarrow0}\bigl(f'_{R,-}(\eta_2+\varepsilon)-f'_{R,-}(\eta_2)\bigr)=0,
\end{equation}
whereas the downward jump of \(f_R'\) at \(\eta_{4,R}\) yields
\begin{equation}\label{R2}
\mu_{f_R}(\{\eta_{4,R}\})=\lim_{\varepsilon\downarrow0}\mu_{f_R}([\eta_{4,R},\eta_{4,R}+\varepsilon))=\lim_{\varepsilon\downarrow0}\bigl(f'_{R,-}(\eta_{4,R}+\varepsilon)-f'_{R,-}(\eta_{4,R})\bigr)\leq0.
\end{equation}
Whenever \(f_R''\) is used below, it denotes the classical second derivative on \((0,\eta_2)\cup(\eta_2,\eta_{4,R})\), is zero on \((\eta_{4,R},\infty)\), and is assigned the value zero at \(0,\eta_2,\eta_{4,R}\). These three assigned values do not affect any subsequent integral.

Finally, set
\begin{equation}\label{epsilon}
\varepsilon_R:=\frac{\eta_{3,R}}{2C_5},
\end{equation}
and define the weighted distance-like function
\begin{equation}\label{rho}
\rho_R\bigl((x,v),(x',v')\bigr):=f_R\!\left(r\bigl((x,v),(x',v')\bigr)\right)\left(1+\varepsilon_RF(x,v)+\varepsilon_RF(x',v')\right).
\end{equation}
\begin{remark}\label{rho-remark}
The function \(\rho_R\) is symmetric, nonnegative, and vanishes only on the diagonal, but it need not satisfy the triangle inequality; see \cite[Lemma 4.14]{HMS2011}. Since \(f_R\) and \(F\) are continuous,
\[\lim_{(x',v')\to(x,v)}\rho_R((x,v),(x',v'))=0.\]
\end{remark}
Whenever \(R\geq R_\star\) is fixed, we use the abbreviations \(h=h_R\), \(\phi=\phi_R\), \(\Phi=\Phi_R\), \(g=g_R\), \(f=f_R\), and \(\mu_f=\mu_{f_R}\). These abbreviations introduce no additional objects.

\subsection{Exponential Ergodicity}

The following theorem is the main result. The cost \(\rho_R\), the localization radius \(R_\star\), and the rate \(\eta_{5,R}\) are defined above in their dependency order.

\begin{theorem}\label{Th}
Suppose that Assumptions \ref{as4.0}--\ref{as4.4} and \eqref{growth-regime} hold. Fix
\(R\geq R_\star\), where \(R_\star\) is defined by \eqref{Rstar-explicit}.
For all \(\mu,\nu\in\mathcal P(E)\) and
\(t\geq0\), set \(c:=1\wedge\eta_{5,R}>0\). Then
\begin{equation*}
\mathcal{W}_{\rho_R}(\mu P_t,\nu P_t)
\leq \mathrm{e}^{-ct}\mathcal{W}_{\rho_R}(\mu,\nu),
\end{equation*}
where \(\eta_{5,R}\) is defined by \eqref{Seeq2} and \(\rho_R\) is defined by \eqref{rho}.
\end{theorem}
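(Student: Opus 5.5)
The plan is a coupling proof: first reduce to Dirac initial laws, then prove a drift inequality for $\rho_R$ along an (approximately) reflection--synchronous coupling, and finally pass to limits. For the reduction, it suffices to show that for every pair of initial points $\xi,\xi'\in E$ there is a coupling $(\xi_t,\xi'_t)$ of $\delta_\xi P_t$ and $\delta_{\xi'}P_t$ with
\[
\mathbb E\,\rho_R(\xi_t,\xi'_t)\leq \mathrm e^{-ct}\rho_R(\xi,\xi').
\]
Given an optimal (or $\epsilon$-optimal) $\Gamma\in\Pi(\mu,\nu)$, measurable selection of these couplings, integrated against $\Gamma$, then yields the theorem. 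The pointwise coupling is obtained as a weak limit, as $n\to\infty$, of the regularized couplings. These use the matrices $\mathsf R_{n,\beta}$, $\Sigma_{n,\beta}$ applied in the region where both copies lie in $S_{R,\delta}$ and $|z|^2+|y|^2\geq\delta^2$, and synchronous coupling elsewhere, with smooth cutoffs in between. Each marginal is a Langevin solution; nonexplosion follows from $\mathcal LF\leq C_5-FG$. Tightness in path space comes from the same Lyapunov bound.

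The heart of the argument is an Itô/Tanaka computation for $\rho_R(\xi_t,\xi'_t)=f(r_t)\bigl(1+\varepsilon_RF_t+\varepsilon_RF'_t\bigr)$. Write $\mathrm d\rho_R\leq K_t\,\mathrm dt+\mathrm dM_t$. The goal is
\[
K_t\leq -c\,\rho_R+\text{(error)},
\]
where the error vanishes as $n\to\infty$ and then $\delta\downarrow0$. I split into the four cases from the Introduction.

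\textbf{Case (i): one copy outside $S_R$.} Use $f\leq\eta_{4,R}$ bounded, the bound $|\mathrm d r|\lesssim \gamma\Lambda$-type terms, and $\mathcal L F\leq C_5-FG$. The growth of $G$ has order $R^2$ for $\ell<2$, and order $R^4$ against force terms of order $R^2$ at $\ell=2$ via Assumption \ref{as4.4}. Here $R\geq R_2,R_3$ ensures that $\varepsilon_R f(\cdot)(2C_5-FG-F'G')$ dominates the positive part of the $f'$-drift, namely $f'\cdot(2\gamma|q|+\gamma^{-1}|\nabla U-\nabla U'|)$. This drift is bounded using Remark \ref{re4.1} by polynomials in $|x|,|v|$. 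The result is a net rate at least $1\wedge\eta_{5,R}$.

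\textbf{Case (ii): both copies in $S_R$, one outside $S_{R,\delta}$.} Compare with a boundary point of $S_R$ and use continuity to reduce to case (i), up to an $o(1)$ error as $\delta\downarrow0$.

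\textbf{Case (iii): $|z|^2+|y|^2<\delta^2$.} Both the cost and its drift are $O(\delta)$.

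\textbf{Case (iv): reflection fully active.} The reflection generates the term $2\gamma\cdot 4\gamma^{-1}f''(r)\cdot(\text{noise factor})$. The ODE \eqref{fe} converts it into $-\Lambda_R\gamma^2 r f'/4$, which absorbs the Lipschitz term by Remark \ref{rem211}, plus $-\eta_{5,R}\gamma\Phi/4\lesssim -\eta_{5,R}f$ using \eqref{fP}. When both weights are moderate, i.e.\ in $\mathcal H$ with $r\le\eta_2$ by $R_1,R_4$, the extra concavity $-\eta_{3,R}\gamma\Phi/4$ pays for the positive part $\varepsilon_R f\,(2C_5)$, since $\varepsilon_R=\eta_{3,R}/(2C_5)$. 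Outside $\mathcal H$, $2C_5-FG-F'G'<0$ gives the negative contribution directly. The cross-variation between $f(r)$ and $F$ is controlled by Cauchy--Schwarz, using the $\delta$-localization. In this case the reflection defect contributes
\[
(1-\alpha_{n,\beta}|\mathbf e_n|^2)\,|f''|\leq (n^{-\beta}+\mathbf 1_{\{|q|<2/n\}})\,C_R .
\]
Then Gronwall gives $\mathbb E\rho_R(\xi^n_t,\xi'^n_t)\le \mathrm e^{-ct}\rho_R(\xi,\xi')+C_R\,\mathbb E\int_0^t\mathbf 1_{\{|q^n_s|<2/n\}}\mathbf 1_{\text{active}}\,\mathrm ds+o_\delta(1)+n^{-\beta}C_R t$.

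\textbf{Main obstacle.} The hardest step is showing that the tube occupation term vanishes as $n\to\infty$. The plan is as follows. Pass to a weak limit along a subsequence. Use the lower semicontinuity/portmanteau bound
\[
\limsup_n\mathbb E\int\mathbf 1_{\{|q^n|<2/n\}\cap A}\leq \mathbb E\int \mathbf 1_{\{q=0\}\cap \bar A}\,\mathrm ds
\]
after enlarging tubes to a fixed width $\epsilon$ and then letting $\epsilon\to0$. Then show that the limit spends zero time on $\{q=0\}\cap\{\text{active}\}$. On this set $y=-2\gamma z\ne0$ (as $|z|^2+|y|^2\ge\delta^2$), so the drift of $q$ equals
\[
2y+\gamma^{-1}\bigl(-\gamma y-(\nabla U(x)-\nabla U(x'))\bigr)+\gamma^{-1}\cdot(\text{martingale}).
\]
Its finite-variation part at $q=0$ is $-2\gamma z\cdot 1-\gamma^{-1}(\nabla U(x)-\nabla U(x'))$. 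If this is bounded away from zero in some coordinate on a compact piece, I would cover the compact active set, select such a coordinate $i$ locally, and apply the occupation-time formula to the continuous semimartingale $q^i$. If $q^i$ has a martingale part, $\int\mathbf 1_{\{q^i=0\}}\,\mathrm d\langle q^i\rangle=0$ already. If the martingale part degenerates, a nonzero drift forces zero Lebesgue time at level $0$. The residual points where this drift vanishes are handled by the deterministic estimates: I expect them to fall into cases (i)--(iii) or to give nonpositive contributions. Finally, let $n\to\infty$ and then $\delta\downarrow0$ to obtain the pointwise contraction with prefactor one. Lower semicontinuity of $\rho_R$ transfers the bound to the limit coupling.
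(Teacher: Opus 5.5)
Your overall architecture matches the paper's: regularized reflection--synchronous couplings, an It\^o--Tanaka drift bound, the four-case split, Gronwall, then \(n\to\infty\) and \(\delta\downarrow0\), and measurable selection. There is, however, a genuine gap at exactly the step you call the main obstacle.

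\textbf{The gap.} You keep the tube error on the whole active set, \(C_R\,\mathbb E\int_0^t\mathbf 1_{\{|q^n_s|<2/n\}}\mathbf 1_{\mathrm{active}}\,\mathrm ds\). You then try to show that the limit spends zero time on \(\{q=0\}\) intersected with the closure of the active set. That is false in general. On \(\{q=0\}\) the drift is \(b=-2\gamma z-\gamma^{-1}(\nabla U(x)-\nabla U(x'))\), and it can vanish for nonconvex \(U\). Suppose \(\nabla^2U\equiv-2\gamma^2\mathbf I_d\) on a ball, which is compatible with Assumptions \ref{as4.0}--\ref{as4.4}. Take an active starting pair with \(q=0\) and both positions in that ball. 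Then the drift of \(Q^{(n)}\) is \(\gamma Q^{(n)}\), and its noise vanishes for \(|Q^{(n)}|\le1/n\) because \(\mathbf e_n=\mathbf 0\) there. Hence \(Q^{(n)}\equiv\mathbf 0\) until the exit time. That exit time has expectation bounded below independently of \(n\): the marginals have fixed laws, and \(Z\) decays deterministically while \(Q=\mathbf 0\). So your remainder does not tend to zero. These residual points lie in Case (iv), not in Cases (i)--(iii). A ``nonpositive contribution'' on the exact zero set of \(b\) would not suffice anyway, because the approximating processes only sit in a tube around it.

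\textbf{The missing idea.} Split the tube by the size of \(b\) relative to \(|Z|\), as the paper does with \(\mathcal B_{R,\delta}^{(n)}\).
\begin{itemize}
\item Where \(|b|<\gamma|Z|/2\), use the identity \(\mathcal D=-2\gamma|Z|+\gamma|Q|+\mathbf e(Q)^{\mathrm T}b\) together with \(|Q|\le|Z|/4\). The latter holds for \(n\ge n_0(\delta)\) because \(|Z|^2+|Y|^2\ge\delta^2\). This gives \(\mathcal D\le-\gamma r\), hence \(\mathcal E f'\mathcal D\le-\gamma f'\rho\): the deterministic part contracts without any reflection.
\item Turning this into \(H\le-\eta_{5,R}\rho\) uses \(f'\ge\phi(\eta_{4,R})/2\) (or \(\phi(\eta_2)/2\) on \(\mathcal H\)). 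It also uses precisely the truncations \(\eta_{3,R}\le\gamma\phi_R(\eta_2)/32\) and \(\eta_{5,R}\le\gamma\phi_R(\eta_{4,R})/32\) in \eqref{eq4.8} and \eqref{Seeq2}, which your plan never invokes.
\item It further needs the It\^o correction plus the \(f(r)\)--\(\mathcal E\) covariation to be nonpositive when \(0<|\mathbf e_n(Q)|<1\). This holds with the exact covariation, which carries the factor \(\alpha_{n,\beta}|\mathbf e_n(Q)|^2\) and is absorbed by the \(\gamma\eta_{1,R}f'\) part of \(f''\). A Cauchy--Schwarz bound only scales like \(\sqrt{\alpha_{n,\beta}}\,|\mathbf e_n(Q)|\) and fails for small \(|\mathbf e_n(Q)|\).
\item The remaining error is then supported on the compact set \(\mathcal N_{R,\delta}\) of closed-active pairs with \(|b|\ge\gamma|z|/2\). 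There \(q=0\) forces \(|z|\ge\delta/\sqrt{1+4\gamma^2}\), hence \(|b|\ge\gamma\delta/(2\sqrt{1+4\gamma^2})>0\). Only on this set is your coordinate-covering argument legitimate.
\item That covering argument also requires the limiting decomposition \(Q=Q_0+\int_0^\cdot b(W_s)\,\mathrm ds+M\), obtained by passing the martingale property to the limit. The mechanism is to combine \(\int\mathbf 1_{\{Q^j=0\}}\,\mathrm dQ^j=0\) with \(\int\mathbf 1_{\{Q^j=0\}}\,\mathrm d\langle M^j\rangle=0\), not a dichotomy on whether the martingale part degenerates.
\end{itemize}

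\textbf{Case (i).} The comparison must be homogeneous in \(r\). Use \(rf'(r)\le f(r)\) and the Lipschitz quotient from Assumption \ref{as4.1}. Growth bounds as in Remark \ref{re4.1} do not vanish as \(r\to0\), while \(\varepsilon_R f(r)(\mathcal S-2C_5)\) does.

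\textbf{Valid alternatives.} Your Portmanteau step with closed fixed-width tubes is a valid substitute for the paper's Skorokhod argument. Likewise, using lower semicontinuity of \(\rho_R\) on the limit coupling is a valid substitute for the paper's bound via \(\inf_n\mathbb E\rho_t^{(n)}\).
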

\begin{remark}\label{re4.3}
For uniformly elliptic diffusions, quantitative Harris theorems based on reflection coupling can use the position difference directly; see \cite{Zimmer}. Here the noise acts only on velocity, so the combination \(2(X-X')+\gamma^{-1}(V-V')\) in \eqref{rxv} transfers velocity noise to the position difference. The geometric coefficients in \eqref{rxv} are fixed, whereas the local Lipschitz growth enters only through \(\Lambda_R\) in \eqref{eq4.5}. Below the endpoint, Section~4 proves \(h_R(\eta_2)\leq A_1+A_2R+A_3R^\ell\), and the quadratic Lyapunov growth absorbs this expression for every \(L_1>0\). At \(\ell=2\), the localization \eqref{SRset} gives \(h_R(\eta_2)\leq A_1+(A_2+A_3)R^2\), while Assumption \ref{as4.4} or the velocity boundary yields a lower bound of order \(R^4\) for \(\mathcal V\). Thus \eqref{radius-2} and \eqref{radius-3} define a finite \(R_\star\) without requiring \(L_1\) to be small.

For comparison, the locally Lipschitz extension in Appendix~C of \cite{GBM2022} assumes \(|\nabla U(x)-\nabla U(y)|\leq(L_U+\psi(x)+\psi(y))|x-y|\), where \(L_U\) is the constant part of the modulus and \(0\leq\psi(x)\leq L_\psi\sqrt{\lambda|x|^2+24U(x)}\) is its state-dependent part. After the interaction force is set to zero, the coefficient \(L_\psi\) remains subject to \cite[(C.2)]{GBM2022}. In our notation, \(L_1(1+|x|^\ell+|y|^\ell)\) replaces this local modulus, while Assumptions \ref{as4.2}--\ref{as4.4} provide the Lyapunov and endpoint confinement inputs. Theorem \ref{Th} removes a smallness restriction specifically along this local-Lipschitz-coefficient axis, subject to our stated confinement assumptions. It covers power-law confining potentials with force growth \(|x|^{p-1}\), \(2<p<4\), for arbitrary \(L_1\), and covers quartic confining potentials with cubic force under Assumption \ref{as4.4}, again without a smallness condition on \(L_1\).
\end{remark}
\begin{remark}
The dependence of the rate on \(R\) follows directly from the construction. Equation \eqref{RD} gives \(\eta_{4,R}=2R(3+\gamma^{-1})\) when \(\ell<2\) and \(\eta_{4,R}=6R+2\gamma^{-1}R^2\) when \(\ell=2\).
Since \(\Phi(s)\leq s\) and \(h\) is increasing,
\[\int_0^{\eta_{4,R}}\frac{\Phi(s)}{\phi(s)}\,\mathrm ds\leq\int_0^{\eta_{4,R}}s\mathrm e^{h(s)}\,\mathrm ds\leq\frac{\eta_{4,R}^2}{2}\mathrm e^{h(\eta_{4,R})}.\]
Consequently, \(J_{2,R}^{-1}\geq2(\gamma\eta_{4,R}^2)^{-1}\exp\{-h(\eta_{4,R})\}\). Combining this bound with the second entry in \eqref{Seeq2} gives
\begin{equation}\label{eta5-lower}
\eta_{5,R}\geq\left(\frac{2}{\gamma\eta_{4,R}^2}\wedge\frac{\gamma}{32}\right)\exp\left\{-\frac{\Lambda_R\gamma^2}{8}\eta_{4,R}^2-\eta_{1,R}\gamma\eta_{4,R}\right\}.
\end{equation}
Using \eqref{eq4.5}, \eqref{eq4.7}, and \eqref{RD}, there is a finite constant \(C>0\), depending only on the parameters in the assumptions, such that \(\eta_{5,R}\geq CR^{-2}\exp\{-CR^{\ell+2}\}\) for \(0\leq\ell<2\), whereas \(\eta_{5,R}\geq CR^{-4}\exp\{-CR^6\}\) at \(\ell=2\). These bounds are conservative consequences of the explicit formula \eqref{eta5-lower}; no numerical claim in Section~\ref{Sec6} is based on their sharpness.
\end{remark}

\begin{corollary}\label{Co1}
Suppose that Assumptions \ref{as4.0}--\ref{as4.4} and \eqref{growth-regime} hold, and fix
\(R\geq R_\star\). If
\(\mathcal W_{\rho_R}(\mu,\mu_*)<\infty\), set \(c:=1\wedge\eta_{5,R}\). Then
\[\mathcal W_{\rho_R}(\mu P_t,\mu_*)
\leq\mathrm e^{-ct}\mathcal W_{\rho_R}(\mu,\mu_*),
\qquad t\geq0,\]
where \(\rho_R\) is defined by \eqref{rho} and \(\mu_*\) is the Gibbs measure defined in the Introduction.
Moreover, \(\mu_*\) is the unique invariant probability measure in the class
\(\{\nu\in\mathcal P(E):\mathcal W_{\rho_R}(\nu,\mu_*)<\infty\}\).
\end{corollary}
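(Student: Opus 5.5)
The proof of Corollary \ref{Co1} will follow from Theorem \ref{Th} together with the invariance \(\mu_*P_t=\mu_*\) recorded in the remark after Assumption \ref{as4.0}.

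\emph{Contraction toward \(\mu_*\).} Take \(\nu=\mu_*\) in Theorem \ref{Th}. Since \(\mu_*P_t=\mu_*\) for every \(t\geq0\), the theorem gives
\[\mathcal W_{\rho_R}(\mu P_t,\mu_*)=\mathcal W_{\rho_R}(\mu P_t,\mu_*P_t)\leq \mathrm e^{-ct}\mathcal W_{\rho_R}(\mu,\mu_*).\]
The theorem holds for all pairs of probability measures, so no finiteness is needed for this inequality. The hypothesis \(\mathcal W_{\rho_R}(\mu,\mu_*)<\infty\) only makes the right-hand side a finite, exponentially decaying quantity.

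\emph{Uniqueness.} Let \(\nu\) be invariant with \(\mathcal W_{\rho_R}(\nu,\mu_*)<\infty\). Then \(\nu P_t=\nu\) for all \(t\), and the first step yields
\[\mathcal W_{\rho_R}(\nu,\mu_*)=\mathcal W_{\rho_R}(\nu P_t,\mu_*)\leq \mathrm e^{-ct}\mathcal W_{\rho_R}(\nu,\mu_*).\]
Because \(c>0\) and the quantity is finite, letting \(t\to\infty\) (or taking any \(t>0\)) forces \(\mathcal W_{\rho_R}(\nu,\mu_*)=0\).

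It remains to convert zero cost into \(\nu=\mu_*\), and this is the only step needing some care. I would first show that the infimum in the definition of \(\mathcal W_{\rho_R}\) is attained. The function \(\rho_R\) is continuous, being a composition of the continuous \(f_R\), the kinetic distance \(r\), and \(F\), and it is nonnegative. The set \(\Pi(\nu,\mu_*)\) is tight, hence weakly compact. The map \(\Gamma\mapsto\int\rho_R\,\mathrm d\Gamma\) is lower semicontinuous under weak convergence: approximate \(\rho_R\) from below by \(\rho_R\wedge k\), which is bounded and continuous, and apply monotone convergence.

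Hence some optimal coupling \(\Gamma\) satisfies \(\int\rho_R\,\mathrm d\Gamma=0\), so \(\rho_R=0\) holds \(\Gamma\)-almost surely. Since \(1+\varepsilon_RF+\varepsilon_RF'\geq1\), this gives \(f_R(r)=0\). By \eqref{fP}, \(f_R(r)\geq r\phi_R(\eta_{4,R})/2>0\) for \(0<r\leq\eta_{4,R}\), and \(f_R\) is constant and positive beyond \(\eta_{4,R}\). Therefore \(r=0\), and the definition \eqref{rxv} forces \(x=x'\) and \(v=v'\), as in Remark \ref{rho-remark}. So \(\Gamma\) is supported on the diagonal, which gives \(\nu=\mu_*\).

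Finally, \(\mu_*\) itself belongs to the class, since \(\mathcal W_{\rho_R}(\mu_*,\mu_*)=0\) via the diagonal coupling. This completes the argument.
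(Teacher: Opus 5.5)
Your proposal is correct and follows the paper's proof. You apply Theorem \ref{Th} with \(\nu=\mu_*\) via invariance, deduce zero cost for a second invariant measure of finite cost, and then use attainment of the infimum for the continuous nonnegative cost \(\rho_R\) together with the fact that \(\rho_R\) vanishes only on the diagonal. You also fill in details the paper leaves implicit: the lower-semicontinuity argument for attainment, and why \(f_R(r)=0\) forces \(r=0\).
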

\begin{remark}
The proof uses the gradient structure in two places: the explicit Lyapunov estimate in Section~4 and the Gibbs measure in Corollary \ref{Co1}. A non-gradient extension is possible only after these two inputs are replaced by verified analogues. We do not state such an extension here, thereby avoiding an unproved compatibility condition.
\end{remark}

\section{Coupling Construction}\label{Sec3}
All constants and the cost \(\rho_R\) were defined in Section~\ref{Sec2} in their dependency order. The cost is adapted to the kinetic structure because \eqref{rxv} combines the position difference with a modified position--velocity difference. We now construct the regularized path-space couplings used to prove its contraction.

Let \(\Xi_1:=C([0,\infty);E)\)
be the space of continuous \(E\)-valued paths, endowed with the topology of uniform convergence on compact time intervals and its Borel \(\sigma\)-field \(\mathcal{B}(\Xi_1)\). Throughout, Assumptions \ref{as4.0}--\ref{as4.3} guarantee strong existence and pathwise uniqueness for
\eqref{eq4.1}, for every deterministic initial condition
\((x, v)\in E\). In particular, the equation is unique in law.

We use the standard definition in \cite[Chapter~5, Definition~3.1]{KS1991}.
\begin{definition}
Let \((x,v)\in E\). A weak solution to
\eqref{eq4.1} with initial condition \((x,v)\) is a collection \(\left(\Omega,\mathcal{F},(\mathcal{F}_t)_{t\geq 0},
\mathbb{P},B,(X,V)\right)\)
such that the following properties hold:	\begin{enumerate}
\item \((\Omega,\mathcal{F},(\mathcal{F}_t)_{t\geq 0},\mathbb{P})\) is a filtered probability space satisfying the usual conditions;

\item \(B=(B_t)_{t\geq 0}\) is a \(d\)-dimensional \((\mathcal{F}_t)_{t\geq 0}\)-Brownian motion;

\item \((X,V)=(X_t,V_t)_{t\geq 0}\) is an \((\mathcal{F}_t)_{t\geq 0}\)-adapted continuous \(E\)-valued process;

\item \((X_0,V_0)=(x,v)\), \(\mathbb{P}\text{-a.s.};\)

\item for every \(T>0\), \[\int_0^T \left(|V_s|+|\nabla U(X_s)|\right)\,\mathrm ds<\infty, \qquad \mathbb{P}\text{-a.s.},	\]
and, for every \(t\geq 0\),
\[X_t=x+\int_0^t V_s\,\mathrm ds,\qquad \mathbb{P}\text{-a.s.},\]
and
\[V_t=v-\int_0^t\left(\gamma V_s+\nabla U(X_s)\right)\,\mathrm ds
+\sqrt{2\gamma}\,B_t,\qquad \mathbb{P}\text{-a.s.}\]
\end{enumerate}
The path-space law of the weak solution is the probability measure
\[\mathbf{P}^{(x, v)}:=\mathbb{P}\circ\bigl((X_t,V_t)_{t\geq 0}\bigr)^{-1}\in\mathcal{P}(\Xi_1).\]
By uniqueness in law, \(\mathbf{P}^{(x, v)}\) is independent of the particular weak solution used in its definition.
\end{definition}

\begin{definition}\label{wcoup} Let \((x,v)\in E\) and \((x',v')\in E\). A probability measure \(\mathbf{Q}\in\mathcal{P}(\Xi_1\times\Xi_1)\)
is called a weak coupling of two solutions to
\eqref{eq4.1} starting from \((x, v)\) and \((x', v')\), respectively, if \(\mathbf{Q}(\pi_1^{-1}(A))=\mathbf{P}^{(x, v)}(A)\) and \(\mathbf{Q}(\pi_2^{-1}(A))=\mathbf{P}^{(x', v')}(A)\), \(A\in\mathcal B(\Xi_1)\),
where \(\pi_1(\omega,\omega'):=\omega\) and \(\pi_2(\omega,\omega'):=\omega'\)
are the two coordinate projections. On the canonical probability space \((\Xi_1\times\Xi_1,
\mathcal{B}(\Xi_1)\otimes\mathcal{B}(\Xi_1),
\mathbf{Q})\), define the two canonical coordinate processes by
\[(X_t,V_t)(\omega,\omega'):=\omega(t),\qquad (X'_t,V'_t)(\omega,\omega'):=\omega'(t),\qquad t\geq 0.\]
The marginal conditions imply that \(\operatorname{Law}_{\mathbf{Q}}\bigl((X_t,V_t)_{t\geq 0}\bigr)=\mathbf{P}^{(x, v)}\)
and \(\operatorname{Law}_{\mathbf{Q}}\bigl((X'_t,V'_t)_{t\geq 0}\bigr)=\mathbf{P}^{(x', v')}\).
In particular, \((X_0,V_0)=(x, v)\), \((X'_0,V'_0)=(x', v')\), \(\mathbf{Q}\text{-a.s.}\),
and each coordinate process, when considered with respect to its own natural filtration, is a weak solution to \eqref{eq4.1} with the corresponding initial condition.
\end{definition}
\begin{remark}\label{rewc}
Definition \ref{wcoup} is the usual notion of a coupling of probability measures applied to the path-space laws of weak solutions; see, for instance, \cite[pp.~9--10]{Lindvall1992}. Thus a weak coupling is simply a probability measure on the product path space with prescribed weak-solution marginals. The ``weak'' emphasizes that the coupling is specified only at the level of path laws: no adaptedness or compatibility with respect to the joint canonical filtration \(\mathcal{G}_t:=\sigma\bigl((X_s,V_s),(X'_s,V'_s):0\leq s\leq t\bigr), t\geq 0\) is imposed. In particular, the definition does not require the existence of Brownian motions on the joint canonical space with respect to which both coordinate processes solve \eqref{eq4.1} simultaneously. Nevertheless, weak couplings are sufficient for Wasserstein estimates.
Indeed, if \(\mathbf{Q}\in\Pi(\mathbf{P}^{(x,v)},\mathbf{P}^{(x',v')})\),
then, for every \(t\geq0\),
\[\mathbf{Q}_t:=\mathbf{Q}\circ\bigl((X_t,V_t),(X'_t,V'_t)\bigr)^{-1}\in\Pi\bigl(\delta_{(x,v)}P_t,\delta_{(x',v')}P_t\bigr).\]
Consequently, for any nonnegative cost function \(\varrho\), whenever the
right-hand side is finite,
\[\mathcal{W}_{\varrho}\bigl(\delta_{(x,v)}P_t,\delta_{(x',v')}P_t\bigr)\leq\mathbb{E}_{\mathbf{Q}}
\left[\varrho\bigl((X_t,V_t),(X'_t,V'_t)\bigr)
\right].\]
\end{remark}

The remark \ref{rewc} shows that, for Wasserstein estimates, it is enough to construct a coupling at the level of path-space laws. For every \(n\geq 2\), let \(\chi_n\in C^\infty([0,\infty);[0,1])\) be such that \(\chi_n(r)=0\) for \(0\leq r\leq n^{-1}\) and \(\chi_n(r)=1\) for \(r\geq 2n^{-1}\). For \(q\in\mathbb R^d\), define
\[\mathbf e(q):=\begin{cases}\frac{q}{|q|},&q\neq\mathbf0,\\ \mathbf0,&q=\mathbf0,\end{cases}\qquad \mathbf e_n(q):=\chi_n(|q|)\mathbf e(q).\]
Then \(\mathbf e_n\) is bounded and continuous, and \(|\mathbf e_n(q)|\leq 1\) for all \(q\in\mathbb R^d\). Fix \(\beta\in(0,2)\). Set \(\alpha_{n,\beta}:= 1-n^{-\beta}\) and define
\[\mathsf{R}_{n,\beta}(q):=\mathbf I_d-2\alpha_{n,\beta}\mathbf e_n(q)\mathbf e_n(q)^{\mathrm T},
\qquad q\in\mathbb R^d.\]
Since \(\mathbf e_n(q)\mathbf e_n(q)^{\mathrm T}\) is symmetric, \(\mathsf{R}_{n,\beta}(q)\) is symmetric.
Moreover,
\[\mathsf{R}_{n,\beta}(q)\mathsf{R}_{n,\beta}(q)^{\mathrm T}=\mathbf I_d-4\alpha_{n,\beta}\mathbf e_n(q)\mathbf e_n(q)^{\mathrm T}
+4\alpha_{n,\beta}^2\bigl(\mathbf e_n(q)\mathbf e_n(q)^{\mathrm T}\bigr)^2.\]
Using \(\bigl(\mathbf e_n(q)\mathbf e_n(q)^{\mathrm T}\bigr)^2=|\mathbf e_n(q)|^2 \mathbf e_n(q)\mathbf e_n(q)^{\mathrm T}\),
we obtain
\[\mathbf I_d-\mathsf{R}_{n,\beta}(q)\mathsf{R}_{n,\beta}(q)^{\mathrm T}
=4\alpha_{n,\beta}\bigl(1-\alpha_{n,\beta}|\mathbf e_n(q)|^2\bigr)\mathbf e_n(q)\mathbf e_n(q)^{\mathrm T}.\]
The right-hand side is non-negative definite. Indeed, for any \(\xi\in\mathbb R^d\),
\[\xi^{\mathrm T}\bigl(\mathbf I_d-\mathsf{R}_{n,\beta}(q)\mathsf{R}_{n,\beta}(q)^{\mathrm T}\bigr)\xi
=4\alpha_{n,\beta}\bigl(1-\alpha_{n,\beta}|\mathbf e_n(q)|^2\bigr)\bigl\langle \mathbf e_n(q),\xi\bigr\rangle^2\geq 0.\]
Thus we define
\[\Sigma_{n,\beta}(q):=\bigl(\mathbf I_d-\mathsf{R}_{n,\beta}(q)\mathsf{R}_{n,\beta}(q)^{\mathrm T}\bigr)^{\frac{1}{2}},\]
where the square root denotes the non-negative symmetric square root. The map
\(q\mapsto \Sigma_{n,\beta}(q)\) is bounded and locally Lipschitz. Indeed, for
\(q\ne\mathbf0\),
\[\Sigma_{n,\beta}(q)=2\sqrt{\alpha_{n,\beta}
\bigl(1-\alpha_{n,\beta}\chi_n(|q|)^2\bigr)}\,
\chi_n(|q|)\mathbf e(q)\mathbf e(q)^{\mathrm T},\]
whereas \(\Sigma_{n,\beta}\) vanishes in a neighborhood of the origin. The same
argument shows that \(\mathsf{R}_{n,\beta}\) is bounded and locally Lipschitz. By
construction,
\begin{equation}\label{AAI}
\mathsf{R}_{n,\beta}(q)\mathsf{R}_{n,\beta}(q)^{\mathrm T}
+\Sigma_{n,\beta}(q)\Sigma_{n,\beta}(q)^{\mathrm T}
=\mathbf I_d,\qquad q\in\mathbb R^d.
\end{equation}

Let \(u_{\rm r},u_{\rm s}:E\times E\to[0,1]\) be bounded Lipschitz functions satisfying \(u_{\rm r}^2+u_{\rm s}^2\equiv 1\). For each \(n\geq2\), let \(B^{1,n},B^{2,n},B^{3,n}\) be independent \(d\)-dimensional Brownian motions and use the abbreviations
\[\begin{aligned}
Q_t^{(n)}&:=2(X_t^{(n)}-X_t^{\prime(n)})+\gamma^{-1}(V_t^{(n)}-V_t^{\prime(n)}),\\
u_{{\rm r},t}^{(n)}&:=u_{\rm r}\bigl((X_t^{(n)},V_t^{(n)}),(X_t^{\prime(n)},V_t^{\prime(n)})\bigr),\\
u_{{\rm s},t}^{(n)}&:=u_{\rm s}\bigl((X_t^{(n)},V_t^{(n)}),(X_t^{\prime(n)},V_t^{\prime(n)})\bigr).
\end{aligned}\]
Consider the following regularized reflection--synchronous system:
\begin{equation}\label{regu-coup}
\begin{aligned}
\mathrm dX_t^{(n)}
&=
V_t^{(n)}\,\mathrm dt,\\
\mathrm dV_t^{(n)}
&=
-\gamma V_t^{(n)}\,\mathrm dt
-\nabla U(X_t^{(n)})\,\mathrm dt
+\sqrt{2\gamma}\,u_{{\rm r},t}^{(n)}\,\mathrm dB_t^{1,n}
+\sqrt{2\gamma}\,u_{{\rm s},t}^{(n)}\,\mathrm dB_t^{2,n},\\
\mathrm dX_t^{\prime(n)}
&=
V_t^{\prime(n)}\,\mathrm dt,\\
\mathrm dV_t^{\prime(n)}
&=
-\gamma V_t^{\prime(n)}\,\mathrm dt
-\nabla U(X_t^{\prime(n)})\,\mathrm dt
+\sqrt{2\gamma}\,u_{{\rm r},t}^{(n)}\mathsf{R}_{n,\beta}(Q_t^{(n)})\,\mathrm dB_t^{1,n}\\
&\quad
+\sqrt{2\gamma}\,u_{{\rm s},t}^{(n)}\,\mathrm dB_t^{2,n}
+\sqrt{2\gamma}\,u_{{\rm r},t}^{(n)}\Sigma_{n,\beta}(Q_t^{(n)})\,\mathrm dB_t^{3,n},
\end{aligned}
\end{equation}
with initial condition \((X_0^{(n)},V_0^{(n)},X_0^{\prime(n)},V_0^{\prime(n)})
=(x,v,x',v')\).
The theorem below proves that this system has a global weak solution. For each
\(n\ge2\), fix one such solution and denote by \(\mathbf Q_n\) the law of
\[\Bigl((X_t^{(n)},V_t^{(n)}),
(X_t^{\prime(n)},V_t^{\prime(n)})\Bigr)_{t\geq 0}\]
on \(\Xi_2:=C([0,\infty);E\times E)\).

\begin{theorem}\label{thwcc}
Suppose that Assumptions \ref{as4.0}--\ref{as4.3} hold. Fix \(\beta\in(0,2)\). Then the family \(\{\mathbf Q_n\}_{n\ge2}\)
is tight on \(\Xi_2\). Moreover, every weak limit \(\mathbf Q\) of \(\{\mathbf Q_n\}_{n\ge2}\) satisfies \(\mathbf Q\in\Pi\bigl(\mathbf P^{(x,v)},\mathbf P^{(x',v')}\bigr)\). Consequently, the canonical coordinate process on \((\Xi_2,\mathcal B(\Xi_2), \mathbf Q)\) is a weak coupling, in the sense of Definition \ref{wcoup}, of two solutions to \eqref{eq4.1} starting from \((x,v)\) and \((x',v')\), respectively.
\end{theorem}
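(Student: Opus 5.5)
Fix \(n\geq2\). The plan has three parts: existence of a global weak solution to \eqref{regu-coup}, uniform moment bounds giving tightness, and identification of the marginals of any limit.

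\textbf{Existence and marginal laws for fixed \(n\).} The coefficients of \eqref{regu-coup} are locally Lipschitz: \(\nabla U\) is locally Lipschitz by Assumption \ref{as4.1}, \(u_{\rm r},u_{\rm s}\) are Lipschitz, and \(\mathsf R_{n,\beta},\Sigma_{n,\beta}\) are bounded and locally Lipschitz. Hence there is a unique strong solution up to an explosion time. Nonexplosion follows by applying the generator of the coupled system to \(F(x,v)+F(x',v')\).

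For each coordinate, the diffusion matrix in \(\mathrm dV\) is exactly \(2\gamma\mathbf I_d\). For the first copy this follows from \(u_{\rm r}^2+u_{\rm s}^2=1\). For the second copy it follows from
\[u_{\rm r}^2(\mathsf R\mathsf R^{\mathrm T}+\Sigma\Sigma^{\mathrm T})+u_{\rm s}^2=1,\]
using \eqref{AAI}. Consequently, the marginal generator acting on functions of \((x',v')\) alone is \(\mathcal L\) from \eqref{eq4.3}, and the same holds for the first copy. The Foster--Lyapunov bound \(\mathcal LF\leq C_5-FG\leq C_5\), which I take as the Lyapunov estimate announced for Section~4 (it uses only the second-order part, identical for both copies), gives
\[\mathbb E\bigl[F(\xi_{t\wedge\tau_k})+F(\xi'_{t\wedge\tau_k})\bigr]\leq F(\xi_0)+F(\xi'_0)+2C_5t,\]
where \(\tau_k\) is the exit time from a large ball. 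Since \(F\to\infty\) at infinity, letting \(k\to\infty\) yields nonexplosion.

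Moreover, each coordinate solves the martingale problem for \(\mathcal L\) with initial point \((x,v)\), respectively \((x',v')\). Indeed, by Lévy's characterization,
\[\widetilde B_t:=\int_0^t u_{\rm r}\,\mathrm dB^{1,n}+\int_0^t u_{\rm s}\,\mathrm dB^{2,n}\]
is a Brownian motion, and the analogous combination for the second copy is one as well. By the assumed uniqueness in law, the marginals of \(\mathbf Q_n\) are exactly \(\mathbf P^{(x,v)}\) and \(\mathbf P^{(x',v')}\).

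\textbf{Tightness and identification of the limit.} Tightness of \(\{\mathbf Q_n\}\) on \(\Xi_2\) is nearly automatic once the marginals are fixed. A family of probability measures on a product of Polish spaces whose marginals form tight families is itself tight. Here each marginal family is a single probability measure, \(\mathbf P^{(x,v)}\) or \(\mathbf P^{(x',v')}\), which is tight on the Polish space \(\Xi_1\). Hence \(\{\mathbf Q_n\}\) is tight, and by Prokhorov relatively compact. If one prefers the Kolmogorov--Chentsov route, the uniform bound on \(\mathbb E\sup_{s\leq T}F\) together with the bounded diffusion coefficients gives equicontinuity moduli uniform in \(n\); I would mention this as an alternative but use the marginal argument.

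For any weak limit \(\mathbf Q=\lim_k\mathbf Q_{n_k}\), the projections \(\pi_1,\pi_2\) are continuous, so \(\mathbf Q\circ\pi_i^{-1}=\lim_k\mathbf Q_{n_k}\circ\pi_i^{-1}\). Each term in this sequence equals the fixed measure \(\mathbf P^{(x,v)}\), respectively \(\mathbf P^{(x',v')}\), so \(\mathbf Q\in\Pi(\mathbf P^{(x,v)},\mathbf P^{(x',v')})\). The final assertion of Theorem \ref{thwcc} then follows from Definition \ref{wcoup}.

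\textbf{Main obstacle.} The delicate step is the exact-marginal claim for fixed \(n\): that \eqref{regu-coup} is globally well posed and that each copy is a solution of \eqref{eq4.1} in law. This requires that the noise decomposition preserve the covariance \(2\gamma\mathbf I_d\) pointwise; that is precisely the role of \(\Sigma_{n,\beta}\) and \eqref{AAI}. It also requires nonexplosion under merely locally Lipschitz drift, which is where the Lyapunov function \(F\) enters. Once these are checked, tightness and the marginal identification are soft.
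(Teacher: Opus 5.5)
Your proposal is correct and follows the paper's proof essentially step by step: local well-posedness from locally Lipschitz coefficients, L\'evy's characterization via \eqref{AAI} and \(u_{\rm r}^2+u_{\rm s}^2=1\), uniqueness in law to fix both marginals of every \(\mathbf Q_n\), tightness because these fixed marginals are single (hence tight) laws, and identification of the limit's marginals through the continuous projections. The only deviation is in the nonexplosion step. You use \(F=\mathrm e^{\mathcal V}\) with \(\mathcal LF\le C_5\) from Proposition~\ref{pr4.1}, which is proved independently, so there is no circularity. The paper instead uses the Hamiltonian \(U(x)+|v|^2/2\) with \(\mathcal L(U+|v|^2/2)=\gamma(d-|v|^2)\le\gamma d\), which keeps the argument self-contained without a forward reference.
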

\begin{proof}
Fix \(n\ge2\). Assumption \ref{as4.1} implies that \(\nabla U\) is locally
Lipschitz. The preceding construction shows that \(\mathsf{R}_{n,\beta}\) and
\(\Sigma_{n,\beta}\) are also locally Lipschitz. Hence all coefficients of
\eqref{regu-coup} are locally Lipschitz, and the standard local existence theorem
gives a unique maximal strong solution with explosion time \(\tau^{(n)}\).
For \(t<\tau^{(n)}\), define
\[\widetilde B_t^{(n)}
:=\int_0^t u_{{\rm r},s}^{(n)}\,\d B_s^{1,n}+
\int_0^t u_{{\rm s},s}^{(n)}\,\d B_s^{2,n},\]
and
\[\widetilde B_t^{\prime(n,\beta)}
:=\int_0^tu_{{\rm r},s}^{(n)}\mathsf{R}_{n,\beta}(Q_s^{(n)})\,\d B_s^{1,n}+\int_0^tu_{{\rm s},s}^{(n)}\,\d B_s^{2,n} +\int_0^tu_{{\rm r},s}^{(n)}
\Sigma_{n,\beta}(Q_s^{(n)})\,\d B_s^{3,n}.\]
Both \(\widetilde B^{(n)}\) and \(\widetilde B^{\prime(n,\beta)}\) are continuous
local martingales starting from zero. The Brownian motions
\(B^{1,n},B^{2,n},B^{3,n}\) are independent. Moreover,
\((u_{{\rm r},s}^{(n)})^2+(u_{{\rm s},s}^{(n)})^2=1\). Therefore,
\[\langle \widetilde B^{(n)}\rangle_t=\int_0^t
\Bigl[(u_{{\rm r},s}^{(n)})^2\mathbf I_d+(u_{{\rm s},s}^{(n)})^2\mathbf I_d\Bigr]\,\d s=t\mathbf I_d.\]
Furthermore, using \eqref{AAI}, we obtain
\[\begin{aligned}
\langle \widetilde B^{\prime(n,\beta)}\rangle_t
&=\int_0^t \Bigl[(u_{{\rm r},s}^{(n)})^2\Bigl(\mathsf{R}_{n,\beta}(Q_s^{(n)})
\mathsf{R}_{n,\beta}(Q_s^{(n)})^{\mathrm T}+\Sigma_{n,\beta}(Q_s^{(n)})\Sigma_{n,\beta}(Q_s^{(n)})^{\mathrm T}\Bigr) +(u_{{\rm s},s}^{(n)})^2\mathbf I_d\Bigr]\,\d s  \\
&=\int_0^t \Bigl[(u_{{\rm r},s}^{(n)})^2\mathbf I_d+(u_{{\rm s},s}^{(n)})^2\mathbf I_d\Bigr]\,\d s =t\mathbf I_d.\end{aligned}\]
By L\'evy's characterization theorem, each process is a \(d\)-dimensional
Brownian motion up to \(\tau^{(n)}\); see \cite[Theorem 3.16]{KS1991}. Thus each
coordinate process is a local weak solution to \eqref{eq4.1}. It remains to rule out explosion. For \(m\ge1\), let
\[\tau_m^{(n)}:=\inf\bigl\{t\ge0:
|X_t^{(n)}|^2+|V_t^{(n)}|^2+|X_t^{\prime(n)}|^2
+|V_t^{\prime(n)}|^2\ge m^2\bigr\}.\]
Then \(\lim_{m\to\infty}\tau_m^{(n)}=\tau^{(n)}\). Applying It\^o's formula up to
\(t\wedge\tau_m^{(n)}\), and using \eqref{AAI} together with
\[\mathcal L\left(U(x)+\frac{|v|^2}{2}\right)=\gamma(d-|v|^2)\leq\gamma d,\]
gives
\[\begin{aligned}
&\mathbb E\Bigl[U(X_{t\wedge\tau_m^{(n)}}^{(n)})+\frac{1}{2}|V_{t\wedge\tau_m^{(n)}}^{(n)}|^2
+U(X_{t\wedge\tau_m^{(n)}}^{\prime(n)})+\frac{1}{2}|V_{t\wedge\tau_m^{(n)}}^{\prime(n)}|^2\Bigr]\\
\leq& U(x)+\frac{1}{2}|v|^2+U(x')+\frac{1}{2}|v'|^2+2\gamma dt.
\end{aligned}\]
By Assumptions \ref{as4.0} and \ref{as4.2}, the function \(U(x)+|v|^2/2\) is nonnegative and tends to infinity as \(|x|^2+|v|^2\) tends to infinity. Therefore,
for every \(T>0\), the preceding estimate and Markov's inequality imply
\(\lim_{m\to\infty}\mathbb P(\tau_m^{(n)}\le T)=0\). Hence \(\tau^{(n)}=\infty\) almost surely. The two processes displayed above are therefore Brownian motions on
\([0,\infty)\), and the two coordinates of \eqref{regu-coup} are global weak
solutions to \eqref{eq4.1}. By uniqueness in law, their path laws are
\(\mathbf P^{(x,v)}\) and \(\mathbf P^{(x',v')}\), respectively.

We identify \(\Xi_2=C([0,\infty);E\times E)\) with
\(\Xi_1\times\Xi_1\) through the natural homeomorphism. By uniqueness in law for \eqref{eq4.1}, the two marginal path laws of \(\mathbf Q_n\) are independent of \(n\). More precisely, for every \(n\ge2\) and \(A\in\mathcal B(\Xi_1)\),
\begin{equation}\label{Qn}
\mathbf Q_n(A\times\Xi_1)=\mathbf P^{(x,v)}(A),
\qquad\mathbf Q_n(\Xi_1\times A)=\mathbf P^{(x',v')}(A).
\end{equation}
We now prove tightness of \(\{\mathbf Q_n\}_{n\ge2}\). The space \(\Xi_1\), equipped with the topology of uniform convergence on compact
time intervals, is Polish. Let \(\varepsilon>0\). Choose compact
sets \(K_{1, \varepsilon},K_{2, \varepsilon}\subset\Xi_1\) such that
\[\mathbf P^{(x,v)}(K_{1, \varepsilon})\ge1-\frac{\varepsilon}{2},
\qquad\mathbf P^{(x',v')}(K_{2, \varepsilon})\ge1-\frac{\varepsilon}{2}.\]
Under the identification \(\Xi_2\simeq\Xi_1\times\Xi_1\), the set \(K_{1, \varepsilon}\times K_{2, \varepsilon}\) is compact. Moreover, for every \(n\ge2\),
\[\mathbf Q_n\bigl((K_{1, \varepsilon}\times K_{2, \varepsilon})^c\bigr)\le\mathbf Q_n(K_{1, \varepsilon}^c\times\Xi_1)+\mathbf Q_n(\Xi_1\times K_{2, \varepsilon}^c)=\mathbf P^{(x,v)}(K_{1, \varepsilon}^c)+\mathbf P^{(x',v')}(K_{2, \varepsilon}^c)\le\varepsilon.\]
Therefore \(\{\mathbf Q_n\}_{n\ge2}\) is tight on \(\Xi_2\). By Prokhorov's theorem, every subsequence has a weakly convergent subsequence. Let \(\mathbf Q_{n_k}\) converge weakly to \(\mathbf Q\) on \(\Xi_2\). We now describe the limiting process. On the canonical probability space \((\Xi_2,\mathcal B(\Xi_2),\mathbf Q)\), define
\[\bigl((X_t,V_t),(X'_t,V'_t)\bigr)(\omega):=\omega(t),\qquad \omega\in\Xi_2,\quad t\ge0.\]

It remains to verify the marginal laws of \(\mathbf Q\). The two coordinate maps
\[\Xi_2\ni(\omega,\omega')\mapsto\omega\in\Xi_1,
\qquad\Xi_2\ni(\omega,\omega')\mapsto\omega'\in\Xi_1\]
are continuous. Hence, the weak convergence of \(\mathbf Q_{n_k}\) to \(\mathbf Q\) implies
that the marginals of \(\mathbf Q_{n_k}\) converge weakly to the corresponding marginals of \(\mathbf Q\). By \eqref{Qn}, for every \(k\), these marginals are
\(\mathbf P^{(x,v)}\) and \(\mathbf P^{(x',v')}\), respectively. Hence these are
also the marginals of \(\mathbf Q\). Equivalently, for every \(A\in\mathcal B(\Xi_1)\),
\[\mathbf Q(A\times\Xi_1)=\mathbf P^{(x,v)}(A),
\qquad\mathbf Q(\Xi_1\times A)=\mathbf P^{(x',v')}(A).\]
Thus, \(\mathbf Q\in
\Pi\bigl(\mathbf P^{(x,v)},\mathbf P^{(x',v')}\bigr)\).
By Definition~\ref{wcoup}, \(\mathbf Q\) is a weak coupling of two solutions to
\eqref{eq4.1} starting from \((x,v)\) and \((x',v')\), respectively. This completes the proof.
\end{proof}\hfill$\square$
\par The next lemma identifies the semimartingale structure inherited by every weak limit of the regularized couplings. Its drift and quadratic-variation formulas provide the inputs for the occupation-time argument in the proof of the main result.
\begin{lemma}\label{semimartingale}
Suppose that Assumptions \ref{as4.0}--\ref{as4.3} hold. Fix \(\beta\in(0,2)\). Let \(\{\mathbf Q_n\}_{n\ge2}\) be the path-space laws on
\(\Xi_2:=C([0,\infty);E\times E)\) of the regularized couplings defined by \eqref{regu-coup}.
Let \(\mathbf Q\) be any weak limit of this family. Denote the canonical coordinate process by
\[W^{(n)}_t=\bigl((X^{(n)}_t,V^{(n)}_t),(X^{\prime(n)}_t,V^{\prime(n)}_t)\bigr)\quad\text{under }\mathbf Q_n,\]
and by
\[W_t=\bigl((X_t,V_t),(X'_t,V'_t)\bigr)\quad\text{under }\mathbf Q.\]
Let \(\mathcal F_t^0:=\sigma(W_s:0\le s\le t)\) be the raw canonical filtration, and let \((\mathcal F_t)_{t\geq0}\) be its \(\mathbf Q\)-complete right-continuous augmentation. For \((x,v),(x',v')\in E\), define
\begin{equation}\label{qb}
\begin{aligned}
q\bigl((x,v),(x',v')\bigr)&:=2(x-x')+\gamma^{-1}(v-v'),\\
b\bigl((x,v),(x',v')\bigr)&:=(v-v')-\gamma^{-1}\bigl(\nabla U(x)-\nabla U(x')\bigr)\\
&=\gamma q\bigl((x,v),(x',v')\bigr)-2\gamma(x-x')
-\gamma^{-1}\bigl(\nabla U(x)-\nabla U(x')\bigr).
\end{aligned}
\end{equation}

For each \(n\ge2\), set \(Q_t^{(n)}:=q(W_t^{(n)})\).
Define, under \(\mathbf Q\),
\[Q_t:=q(W_t)=2(X_t-X'_t)+\gamma^{-1}(V_t-V'_t)\]
and
\begin{equation}\label{M-limit}
M_t:=Q_t-Q_0-\int_0^t b(W_s)\mathrm ds.
\end{equation}
Then \(M\) is a continuous square-integrable \((\mathcal F_t)\)-martingale under \(\mathbf Q\). Moreover,
\[\mathbb E_{\mathbf Q}|M_t|^2\le 8d\gamma^{-1}t,
\qquad t\ge0.\]
There exists an \((\mathcal F_t)\)-predictable, \(\mathbb S_d^+\)-valued process \(m=(m_t)_{t\ge0}\) such that \(\langle M\rangle_t=\int_0^t m_s\,\mathrm ds\) for any \(t\geq 0\), and \(0\preceq m_t\preceq8\gamma^{-1}\mathbf I_d\)
for \(\mathrm dt\otimes\mathbf Q\)-almost every \((t,\omega)\).
\end{lemma}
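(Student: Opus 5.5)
The plan is to identify the martingale structure at the level of the regularized couplings and then pass it to the weak limit through a martingale-problem characterization.

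\emph{Step 1: the prelimit process.} Fix \(n\). By \eqref{regu-coup}, \(\mathrm dQ^{(n)}_t=b(W^{(n)}_t)\,\mathrm dt+\mathrm dM^{(n)}_t\), where
\[M^{(n)}_t=\gamma^{-1}\sqrt{2\gamma}\int_0^t\Bigl[u^{(n)}_{{\rm r},s}\bigl(\mathbf I_d-\mathsf R_{n,\beta}(Q^{(n)}_s)\bigr)\mathrm dB^{1,n}_s-u^{(n)}_{{\rm r},s}\Sigma_{n,\beta}(Q^{(n)}_s)\,\mathrm dB^{3,n}_s\Bigr],\]
because the synchronous \(B^{2,n}\) terms cancel. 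Its quadratic variation has density
\[m^{(n)}_s=2\gamma^{-1}(u^{(n)}_{{\rm r},s})^2\Bigl[(\mathbf I_d-\mathsf R_{n,\beta})^2+\Sigma_{n,\beta}^2\Bigr].\]
Using symmetry of \(\mathsf R_{n,\beta}\) and \eqref{AAI}, this equals \(2\gamma^{-1}u_{\rm r}^2(2\mathbf I_d-2\mathsf R_{n,\beta})=8\gamma^{-1}u_{\rm r}^2\alpha_{n,\beta}\mathbf e_n\mathbf e_n^{\mathrm T}\). Hence \(0\preceq m^{(n)}\preceq 8\gamma^{-1}\mathbf I_d\), and \(M^{(n)}\) is a true square-integrable martingale with \(\mathbb E|M^{(n)}_t|^2\le 8d\gamma^{-1}t\). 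Consequently, for \(s<t\), every bounded continuous \(\Psi\) of the path on \([0,s]\), and every \(\xi\in\mathbb R^d\), the processes
\[\langle\xi,M^{(n)}_t\rangle,\qquad \langle\xi,M^{(n)}_t\rangle^2-8\gamma^{-1}|\xi|^2t\]
are a martingale and a supermartingale respectively. Equivalently, for \(\xi\in\mathbb R^d\), the process \(8\gamma^{-1}|\xi|^2t-\langle\xi,M^{(n)}_t\rangle^2\) is a submartingale, and \(\langle \xi,M^{(n)}\rangle^2\) is a submartingale.

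\emph{Step 2: passage to the limit.} The functional \(\omega\mapsto Q_t-Q_0-\int_0^tb(W_s)\,\mathrm ds\) is continuous on \(\Xi_2\), since \(b\) is continuous by Assumption~\ref{as4.1}. I would pass the identities \(\mathbb E_{\mathbf Q_n}[(M_t-M_s)\Psi]=0\) and \(\mathbb E_{\mathbf Q_n}[((\langle\xi,M_t\rangle^2-\langle\xi,M_s\rangle^2)-8\gamma^{-1}|\xi|^2(t-s))\Psi]\le0\) to the limit along \(n_k\). The needed uniform integrability comes from uniform moment bounds. The marginals of \(\mathbf Q_n\) are fixed, and \(\mathbb E\sup_{r\le t}|W_r|^p\) together with \(\mathbb E\int_0^t|\nabla U(X_r)|^p\,\mathrm dr\) are finite; these follow from the Lyapunov function \(F\) or from energy estimates with polynomial growth from Remark~\ref{re4.1}. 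Alternatively, a fourth-moment BDG bound on \(M^{(n)}\), uniform in \(n\) because \(m^{(n)}\) is bounded, gives uniform integrability directly. This is where I expect the main technical obstacle: producing the uniform moment bound on the drift \(\int_0^t|b(W_s)|\,\mathrm ds\) under the fixed marginals. Because the marginals are the true Langevin laws, the bound reduces to moments of \(\nabla U(X)\) under \(\mathbf P^{(x,v)}\), which I would obtain from \(\mathcal LF\le C-FG\) or a simpler polynomial Lyapunov bound.

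\emph{Step 3: conclusion.} In the limit, \(M\) is a continuous \((\mathcal F^0_t)\)-martingale, and right-continuity extends this to the augmentation. The limit also satisfies \(\mathbb E|M_t|^2\le8d\gamma^{-1}t\), and for every \(\xi\) the process \(8\gamma^{-1}|\xi|^2t-\langle\xi,M\rangle_t^{\xi}\) is increasing. I would justify the latter because \(\langle\xi,M\rangle^2-8\gamma^{-1}|\xi|^2t\) is a supermartingale; by Doob--Meyer uniqueness, \(\langle\langle\xi,M\rangle\rangle_t-8\gamma^{-1}|\xi|^2t\) is then a decreasing process. It follows that each \(\langle M^i,M^j\rangle\) is absolutely continuous, by polarization, with density bounded by Lebesgue differentiation. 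I would then take \(m_t:=\limsup_{h\downarrow0}h^{-1}(\langle M\rangle_t-\langle M\rangle_{t-h})\) componentwise; this is predictable, and modifying it on a null set makes it symmetric with \(0\preceq m\preceq8\gamma^{-1}\mathbf I_d\). These matrix bounds hold for rational \(\xi\), and hence for all \(\xi\), \(\mathrm dt\otimes\mathbf Q\)-a.e.
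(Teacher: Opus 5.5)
Your proposal is correct and follows essentially the same route as the paper. Both arguments compute the prelimit bracket density $8\gamma^{-1}\alpha_{n,\beta}u_{\rm r}^2\mathbf e_n\mathbf e_n^{\mathrm T}$ and use continuity of the path functional defining $M$. Both then pass the martingale and supermartingale properties to the weak limit via the uniform $L^2$ and BDG fourth-moment bounds, and finish with Doob--Meyer uniqueness, polarization and a predictable density. Your difference-quotient construction of that density replaces the paper's appeal to a Radon--Nikodym theorem, and is equally valid.

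The drift-moment bound you flag as the main obstacle is never needed. $M^{(n)}$ already has the drift subtracted, so its moments are controlled uniformly in $n$ by the quadratic-variation bound alone, which is exactly your BDG alternative and how the paper proceeds.
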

\begin{proof}
Pass to a subsequence that converges weakly to \(\mathbf Q\), and relabel it as \(\{\mathbf Q_n\}_{n\ge2}\). We first derive the required semimartingale decomposition directly from \eqref{regu-coup}, so no equation proved later is used here. Subtracting the two velocity equations in \eqref{regu-coup}, combining the result with \(Q^{(n)}=2(X^{(n)}-X^{\prime(n)})+\gamma^{-1}(V^{(n)}-V^{\prime(n)})\), and using the definition of \(b\) in \eqref{qb}, gives
\[\begin{aligned}
\mathrm dQ_t^{(n)}={}&b(W_t^{(n)})\,\mathrm dt+\sqrt{2\gamma^{-1}}\,u_{{\rm r},t}^{(n)}\bigl(\mathbf I_d-\mathsf{R}_{n,\beta}(Q_t^{(n)})\bigr)\,\mathrm dB_t^{1,n}\\
&-\sqrt{2\gamma^{-1}}\,u_{{\rm r},t}^{(n)}\Sigma_{n,\beta}(Q_t^{(n)})\,\mathrm dB_t^{3,n}.
\end{aligned}\]
The sum of the two stochastic integrals defines a continuous martingale \(M^{(n)}\). Since \(\mathsf{R}_{n,\beta}\) is symmetric, \eqref{AAI} gives, for every \(q\in\mathbb R^d\),
\[\bigl(\mathbf I_d-\mathsf{R}_{n,\beta}(q)\bigr)^2+\Sigma_{n,\beta}(q)\Sigma_{n,\beta}(q)^{\mathrm T}=2\bigl(\mathbf I_d-\mathsf{R}_{n,\beta}(q)\bigr)=4\alpha_{n,\beta}\mathbf e_n(q)\mathbf e_n(q)^{\mathrm T}.\]
The independence of \(B^{1,n}\) and \(B^{3,n}\), together with \(0<\alpha_{n,\beta}<1\), \(0\leq u_{{\rm r},t}^{(n)}\leq1\), and \(|\mathbf e_n(q)|\leq1\), therefore yields the decomposition
\begin{equation*}
Q_t^{(n)}=Q_0^{(n)}+\int_0^t b(W_s^{(n)})\,\mathrm ds+M_t^{(n)},
\qquad t\ge0,
\end{equation*}
where \(W^{(n)}=\bigl((X^{(n)},V^{(n)}),(X^{\prime(n)},V^{\prime(n)})\bigr)\), and
\begin{equation}\label{bMn}
\mathrm d\langle M^{(n)}\rangle_t=8\gamma^{-1}\alpha_{n,\beta}(u_{{\rm r},t}^{(n)})^2\mathbf e_n(Q_t^{(n)})\mathbf e_n(Q_t^{(n)})^{\mathrm T}\,\mathrm dt\preceq8\gamma^{-1}\mathbf I_d\,\mathrm dt.
\end{equation}
Thus \(M^{(n)}\) is square-integrable on every finite time interval.
Fix $T>0$ and define \(\Gamma_T:C\bigl([0,T];E\times E\bigr)\longrightarrow C\bigl([0,T];\mathbb R^d\bigr)\)
by
\[\Gamma_T(w)(t):=q(w(t))-q(w(0))
-\int_0^t b(w(s))\,\mathrm ds,\qquad 0\le t\le T.\]
Thus, \(M^{(n)}=\Gamma_T(W^{(n)})\), \(\mathbf Q_n\)-almost surely.
We claim that $\Gamma_T$ is continuous in the uniform topology. Indeed, suppose that \(\lim_{k\to\infty}\sup_{0\leq t\leq T}|w_k(t)-w(t)|=0\). Assumption \ref{as4.1} implies that
\(\nabla U\), and hence \(b\), is continuous. For all sufficiently large \(k\),
the ranges of \(w_k\) and \(w\) lie in a common compact set. The function
\(b\) is uniformly continuous on this set. Therefore,
\[\lim_{k\to\infty}\sup_{0\le s\le T}|b(w_k(s))-b(w(s))|= 0.\]
It follows that
\begin{align*}
\lim_{k\to\infty}\sup_{0\le t\le T}\left|\int_0^t\bigl(b(w_k(s))-b(w(s))\bigr)\,\mathrm ds
\right|\le T\lim_{k\to \infty}\sup_{0\le s\le T}|b(w_k(s))-b(w(s))|= 0.
\end{align*}
Since \(q\) is linear, it follows that
\[\lim_{k\to \infty}\sup_{0\le t\le T}
\left|\Gamma_T(w_k)(t)-\Gamma_T(w)(t)\right|=0\]
Thus, \(\Gamma_T\) is continuous. Applying the continuous mapping theorem to
\[(\operatorname{id},\Gamma_T):
w\longmapsto \bigl(w,\Gamma_T(w)\bigr),\]
and noting that \(M^{(n)}=\Gamma_T(W^{(n)})\) and \(M=\Gamma_T(W)\), we obtain the following convergence for every
\[\Psi\in C_b\bigl(C([0,T];\mathbb R^{4d})\times C([0,T];\mathbb R^d)\bigr):\]
\begin{equation}\label{Qnw}
\lim_{n\to\infty}\mathbb E_{\mathbf Q_n}\left[\Psi\bigl(W^{(n)},M^{(n)}\bigr)\right]=\mathbb E_{\mathbf Q}\left[\Psi(W,M)\right].
\end{equation}
Thus, \eqref{Qnw} gives convergence of the joint laws, rather than
only the separate convergence of the two marginal laws. This joint
convergence preserves the dependence between the coordinate process
and its martingale part.

Let \(0\le t_1<\cdots<t_k\le s<t\le T\) and let \(\varphi\in
C_b\bigl((\mathbb R^{4d})^k\bigr)\).
For each $i\in\{1,\ldots,d\}$, the martingale property of
$M^{(n)}$ under $\mathbf Q_n$ gives
\begin{equation}\label{Mn-martingale}
\mathbb E_{\mathbf Q_n}\left[\bigl(M_t^{(n),i}-M_s^{(n),i}\bigr)\varphi(W_{t_1}^{(n)},\cdots,W_{t_k}^{(n)})\right]=0.
\end{equation}
Here \(M_t^{(n),i}\) denotes the \(i\)-th component of \(M_t^{(n)}\).
By \eqref{bMn},
\begin{equation*}
\mathbb E_{\mathbf Q_n}\left|M_t^{(n)}-M_s^{(n)}\right|^2
=\mathbb E_{\mathbf Q_n}\operatorname{tr}
\bigl(\langle M^{(n)}\rangle_t-\langle M^{(n)}\rangle_s\bigr)\le 8d\gamma^{-1}(t-s).
\end{equation*}
Therefore,
\[\sup_{n\ge2}\mathbb E_{\mathbf Q_n}
\left|\bigl(M_t^{(n),i}-M_s^{(n),i}\bigr)
\varphi(W_{t_1}^{(n)},\cdots,W_{t_k}^{(n)})\right|^2
\le 8d\gamma^{-1}(t-s)\|\varphi\|_\infty^2.\]
In particular, the family \(\bigl\{
\bigl(M_t^{(n),i}-M_s^{(n),i}\bigr)
\varphi(W_{t_1}^{(n)},\cdots,W_{t_k}^{(n)})\bigr\}_{n\ge 2}\) is uniformly integrable. Combining the joint convergence
\eqref{Qnw}, the uniform integrability above, and
\eqref{Mn-martingale}, we obtain
\begin{equation}\label{martingale-limit}
\lim_{n\to \infty}\mathbb E_{\mathbf Q_n}
\left[\bigl(M_t^{(n), i}-M_s^{(n), i}\bigr)
\varphi(W_{t_1}^{(n)},\cdots,W_{t_k}^{(n)})
\right] = \mathbb E_{\mathbf Q}
\left[\bigl(M_t^i-M_s^i\bigr)\varphi(W_{t_1},\cdots,W_{t_k})\right]=0.
\end{equation}
By a monotone-class argument, \eqref{martingale-limit}
extends to every bounded $\mathcal F_s^0$-measurable random variable
$\xi$, \(\mathbb E_{\mathbf Q}
\left[(M_t^i-M_s^i)\xi\right]=0\).
Hence each component $M^i$ is an
$(\mathcal F_t^0)$-martingale under $\mathbf Q$, and therefore $M$ is a continuous martingale for the raw canonical filtration. Moreover, by the Portmanteau theorem and \eqref{Qnw},
\begin{equation*}
\begin{aligned}
\mathbb{E}_{\mathbf Q}[|M_t|^2] &= \int_{C([0,T];\mathbb{R}^d)} |w(t)|^2 (\mathbf Q \circ \Gamma_T^{-1})(\d w) \\
&\leq \liminf_{n \to \infty} \int_{C([0,T];\mathbb{R}^d)} |w(t)|^2 (\mathbf Q_n \circ \Gamma_T^{-1})(\d w) \\
&=\liminf_{n\to\infty}\mathbb E_{\mathbf Q_n}|M_t^{(n)}|^2=\liminf_{n\to\infty}
\mathbb E_{\mathbf Q_n}\operatorname{tr}\langle M^{(n)}\rangle_t\le8d\gamma^{-1}t.
\end{aligned}
\end{equation*}
Thus $M$ is square-integrable on every finite time interval. We now pass to the filtration used in the statement. Completion by the \(\mathbf Q\)-null sets preserves the martingale property. For the right-continuous augmentation, let \(s_k\downarrow s\) with \(s_k>s\). The raw-filtration martingale property gives \(\mathbb E_{\mathbf Q}[M_t\mid\mathcal F_{s_k}^0]=M_{s_k}\) whenever \(s_k<t\); reverse martingale convergence and the \(L^1\)-continuity of the square-integrable continuous martingale yield \(\mathbb E_{\mathbf Q}[M_t\mid\mathcal F_s]=M_s\). Hence \(M\) is a continuous square-integrable \((\mathcal F_t)\)-martingale.

Fix \(u\in\mathbb Q^d\), and define \(N_t^{(n),u}:=u^{\mathrm T} M_t^{(n)}\) and \(N_t^u:=u^{\mathrm T} M_t\).
Under $\mathbf Q_n$, \(\langle N^{(n),u}\rangle_t= u^{\mathrm T} \langle M^{(n)}\rangle_t u\).
It follows from \eqref{bMn} that
\begin{equation}\label{scalar-N}
\mathrm d\langle N^{(n),u}\rangle_t\le
8\gamma^{-1}|u|^2\,\mathrm dt.
\end{equation}
Consequently, the process \(S_t^{(n),u}:=\bigl(N_t^{(n),u}\bigr)^2
-8\gamma^{-1}|u|^2t\)
is a $\mathbf Q_n$-supermartingale. Indeed,
\begin{align*}
S_t^{(n),u}=\underbrace{\left[\bigl(N_t^{(n),u}\bigr)^2-\langle N^{(n),u}\rangle_t\right]}_{\text{$\mathbf Q_n$-martingale}}-\underbrace{\left[8\gamma^{-1}|u|^2t-\langle N^{(n),u}\rangle_t\right]}_{\text{nondecreasing process}}.
\end{align*}
To pass the supermartingale property to the limit, we first establish
uniform integrability. By the Burkholder--Davis--Gundy inequality and
\eqref{scalar-N},
\begin{equation}\label{eq:uniform-L4}
\sup_{n\ge2}\mathbb E_{\mathbf Q_n}
\left|N_t^{(n),u}\right|^4\le C\sup_{n\ge2}
\mathbb E_{\mathbf Q_n}\left(\langle N^{(n),u}\rangle_t\right)^2\le C\gamma^{-2}|u|^4t^2.
\end{equation}
Thus, for every fixed $t<\infty$, the family \(\bigl\{\bigl(N_t^{(n),u}\bigr)^2\bigr\}_{n\ge 2}\) is uniformly integrable.

Let \(\varphi\in
C_b\bigl((\mathbb R^{4d})^k\bigr)\) satisfying \(\varphi\ge0\). Since $S^{(n),u}$ is a $\mathbf Q_n$-supermartingale,
\begin{equation*}
\mathbb E_{\mathbf Q_n}\left[\bigl(S_t^{(n),u}-S_s^{(n),u}\bigr)
\varphi(W_{t_1}^{(n)},\ldots,W_{t_k}^{(n)})\right]\le0.
\end{equation*}
Using \eqref{Qnw},
\eqref{eq:uniform-L4}, and uniform integrability, we obtain
\[\mathbb E_{\mathbf Q}
\left[\bigl(S_t^u-S_s^u\bigr)
\varphi(W_{t_1},\ldots,W_{t_k})\right]\le0,\]
where \(S_t^u:=(N_t^u)^2-8\gamma^{-1}|u|^2t\).
Another monotone-class argument shows first that \(S^u\) is a supermartingale for the raw canonical filtration. For every \(T>0\), \(|S_t^u|\leq\sup_{0\leq r\leq T}|N_r^u|^2+8\gamma^{-1}|u|^2T\) for \(t\leq T\), and the right-hand side is integrable by Doob's maximal inequality. Thus \(S^u\) is of class \(D\) on bounded intervals, and the same completion and right-continuity argument as above shows that it is an \((\mathcal F_t)\)-supermartingale. Since $N^u=u^{\mathrm T} M$ is a continuous square-integrable \((\mathcal F_t)\)-martingale,
\[(N_t^u)^2-\langle N^u\rangle_t=
(N_t^u)^2-u^{\mathrm T} \langle M\rangle_t u\]
is a $\mathbf Q$-martingale. Let \(\mathcal T_T\) be the set of stopping
times bounded by \(T\). Doob's maximal inequality shows that
\(\{S_\tau^u:\tau\in\mathcal T_T\}\) is uniformly integrable, since
\[|S_t^u|\le \sup_{0\le t\le T}|N_t^u|^2
+8\gamma^{-1}|u|^2T,
\qquad t\le T,\]
and the right-hand side is integrable. Its decomposition can be written as
\[S_t^u=\bigl((N_t^u)^2-\langle N^u\rangle_t\bigr)
-\left(8\gamma^{-1}|u|^2t-\langle N^u\rangle_t\right).\]
The process \(8\gamma^{-1}|u|^2t-\langle N^u\rangle_t\) is \((\mathcal F_t)\)-predictable and has finite variation. Since \((\mathcal F_t)\) satisfies the usual conditions, uniqueness of
the predictable finite-variation part in the Doob--Meyer decomposition implies that \(t\mapsto8\gamma^{-1}|u|^2t
-u^{\mathrm T}\langle M\rangle_tu\)
is nondecreasing. Hence, for every \(0\le s\le t\),
\begin{equation}\label{quadratic-M}
u^{\mathrm T}(\langle M\rangle_t-\langle M\rangle_s)u
\le8\gamma^{-1}|u|^2(t-s),\qquad u\in\mathbb Q^d.
\end{equation}
Since \(\mathbb Q^d\) is countable, we may choose one event of full
\(\mathbf Q\)-probability on which the preceding monotonicity holds for every
\(u\in\mathbb Q^d\). By continuity in \(u\) and density of \(\mathbb Q^d\) in
\(\mathbb R^d\), \eqref{quadratic-M} then holds for every
\(u\in\mathbb R^d\), simultaneously for all \(0\le s\le t\). Since
\(\langle M\rangle_t-\langle M\rangle_s\) is symmetric and non-negative
definite, we conclude that
\begin{equation*}
0\preceq\langle M\rangle_t-\langle M\rangle_s
\preceq8\gamma^{-1}(t-s)\mathbf I_d,\qquad0\le s\le t.
\end{equation*}
To justify the assertion entry by entry, apply the scalar inequality to
$u=e_i$ and $u=e_i+e_j$, where $e_i$ are the coordinate vectors. The
diagonal brackets are locally absolutely continuous, and polarization gives
\[\langle M^i,M^j\rangle
=\frac{1}{2}\bigl(\langle M^i+M^j\rangle-
\langle M^i\rangle-\langle M^j\rangle\bigr).\]
Thus every entry of \(\langle M\rangle\) is locally absolutely continuous.
The Radon--Nikodym theorem for predictable finite-variation processes, applied under the usual filtration \((\mathcal F_t)\), yields
an \((\mathcal F_t)\)-predictable \(\mathbb S_d^+\)-valued process
\(m=(m_t)_{t\ge0}\) such that
\begin{equation}\label{M-density}
\langle M\rangle_t=\int_0^t m_s\,\mathrm ds,
\qquad t\ge0.
\end{equation}
The matrix inequality above implies
\begin{equation}\label{density-bound}
0\preceq m_t\preceq8\gamma^{-1}\mathbf I_d
\qquad\text{for }\mathrm dt\otimes\mathbf Q\text{-a.e. }(t,\omega).
\end{equation}
Combining \eqref{M-limit},
\eqref{M-density}, and \eqref{density-bound}, we obtain,
under $\mathbf Q$,
\begin{equation}\label{Q-decomp}
Q_t=Q_0+\int_0^t b(W_s)\,\mathrm ds+M_t,
\qquad\mathrm d\langle M\rangle_t
=m_t\mathrm dt,\qquad
0\preceq m_t\preceq8\gamma^{-1}\mathbf I_d,
\end{equation}
where \(M\) is a square-integrable continuous \((\mathcal F_t)\)-martingale under \(\mathbf Q\).
\end{proof}\hfill$\square$

For later use, we record the formal notation for the limiting coupling constructed
above. Fix \(R\geq R_\star\) and \(\delta\in(0,R)\), and choose Lipschitz functions \(u_{\rm r},u_{\rm s}:E\times E\to[0,1]\) such that \(u_{\rm r}^2+u_{\rm s}^2\equiv1\) and, for \(z=x-x'\),
\(y=v-v'\),
\begin{equation*}
S_{R,\delta}:=\{(x,v)\in E:|x|<R-\delta,\ |v|<R_{\rm v}(R)-\delta\},
\end{equation*}
and
\begin{equation}\label{rc}
\begin{aligned}
u_{\rm r}((x,v),(x',v'))&=0
\quad\text{if}\quad
(x,v)\notin S_R
\ \text{or}\
(x',v')\notin S_R
\ \text{or}\
|z|^2+|y|^2\leq \frac{\delta^2}{4},\\
u_{\rm r}((x,v),(x',v'))&=1
\quad\text{if}\quad
(x,v)\in S_{R,\delta},\
(x',v')\in S_{R,\delta},
\ \text{and}\
|z|^2+|y|^2\geq \delta^2.
\end{aligned}
\end{equation}
Here \(S_R\) is defined in \eqref{SRset}. Such functions exist. We give an
explicit construction to rule out any compatibility issue at the two switching boundaries. Define \(\vartheta_1,\vartheta_2:[0,\infty)\to[0,1]\) by
\[\vartheta_1(s):=1\wedge\frac{s}{\delta},\qquad \vartheta_2(s):=1\wedge\left(\frac{2s}{\delta}-1\right)^+.\]
Set \(d_R(x,v):=(R-|x|)^+\wedge(R_{\rm v}(R)-|v|)^+\). Then, for \((x,v),(x',v')\in E\), define
\[\vartheta_{R,\delta}\bigl((x,v),(x',v')\bigr):=\vartheta_1(d_R(x,v))\vartheta_1(d_R(x',v'))\vartheta_2\!\left(|(x,v)-(x',v')|\right).\]
and define
\[\begin{aligned}u_{\rm r}\bigl((x,v),(x',v')\bigr)&:=\sin\!\left(\frac{\pi}{2}\vartheta_{R,\delta}\bigl((x,v),(x',v')\bigr)\right),\\ u_{\rm s}\bigl((x,v),(x',v')\bigr)&:=\cos\!\left(\frac{\pi}{2}\vartheta_{R,\delta}\bigl((x,v),(x',v')\bigr)\right).
\end{aligned}\]
The three factors in the definition of \(\vartheta_{R,\delta}\) are Lipschitz. Hence \(\vartheta_{R,\delta},u_{\rm r},u_{\rm s}\) are Lipschitz,
take values in \([0,1]\), satisfy \(u_{\rm r}^2+u_{\rm s}^2=1\), and have the
properties in \eqref{rc}. The gap between $S_{R,\delta}$ and
\(S_R^c\) is essential: without it, a continuous switch with these endpoint
values would not exist.
The weighted cost \(\rho_R\) and its elementary properties are recorded in \eqref{rho} and Remark \ref{rho-remark}.

\section{Auxiliary Lemmas}\label{Sec4}
This section establishes the deterministic localization bounds and stochastic differential inequalities required in the contraction proof. The first subsection controls the Lyapunov weight and localization radii; the second estimates the regularized coupling in each geometric regime.

\subsection{Lyapunov and Localization Estimates}

We now prove the estimates associated with the Lyapunov functions \(\mathcal V,F,G\), the localization radii \(R_1,\ldots,R_5,R_\star\), and the scales \(\eta_{1,R},\ldots,\eta_{5,R}\) defined in Section~\ref{Sec2}.

The following proposition establishes the Foster--Lyapunov inequality for the above
choice of \(F\) and \(G\). The additional growth-dissipation assumption on \(U\) is used
only to make the drift control the \(U\)-part of the Lyapunov function.

\begin{proposition}\label{pr4.1}
Suppose that Assumptions \ref{as4.0}, \ref{as4.2}, and \ref{as4.3} hold. Then the constant \(C_5\) defined in \eqref{global-C-actual} satisfies
\[(\mathcal L F)(x,v)
\le
C_5-F(x,v)G(x,v),
\qquad (x,v)\in\mathbb{R}^{2d}.\]
\end{proposition}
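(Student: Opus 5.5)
The plan is to compute \(\mathcal L F=F\bigl(\mathcal L\mathcal V+\gamma|\nabla_v\mathcal V|^2\bigr)\) and show that
\[\mathcal L\mathcal V+\gamma|\nabla_v\mathcal V|^2\leq C_4-\frac{\gamma}{8}\cdot\frac{\min\{4\theta L_3,2\lambda_\theta,1\}}{4}\,(1+8\mathcal V)\]
up to the bookkeeping of constants. The dissipative part is then at least of the form \(2G\) whenever \(\mathcal V\) is large. The final step is to convert such a pointwise bound into \(\mathcal LF\leq C_5-FG\) via the function \(\Psi\).

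\textbf{Step 1: the quadratic drift.} Write \(8\mathcal V=2U+\frac{\gamma^2}{2}|x|^2+\gamma\langle x,v\rangle+|v|^2\). By \eqref{eq4.3},
\[8\mathcal L\mathcal V=2\langle\nabla U,v\rangle+\gamma^2\langle x,v\rangle+\gamma|v|^2-\gamma\langle x,\gamma v+\nabla U\rangle-2\langle v,\gamma v+\nabla U\rangle+2\gamma d.\]
After the cancellations this equals
\[-\gamma\langle x,\nabla U(x)\rangle-\gamma|v|^2+2\gamma d.\]
The second-order correction is \(\gamma|\nabla_v\mathcal V|^2=\frac{\gamma}{64}|\gamma x+2v|^2\leq\frac{\gamma}{32}(\gamma^2|x|^2+4|v|^2)\). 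This is a small fraction of the dissipation in \(v\). In \(x\), it must be absorbed by \(\langle x,\nabla U\rangle\).

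\textbf{Step 2: splitting the radial term.} Write \(\langle x,\nabla U\rangle=\theta\langle x,\nabla U\rangle+(1-\theta)\langle x,\nabla U\rangle\). Apply Assumption \ref{as4.3} to the \(\theta\)-part, giving \(\theta L_3U-\theta K_2\), which controls \(U\). On \(|x|\geq K_1\), apply Assumption \ref{as4.2} to the \((1-\theta)\)-part, giving \((1-\theta)L_2|x|^2\). On \(|x|<K_1\), bound this part crudely by \((1-\theta)K_1C_1(1+K_1^{\ell+1})\) via Remark \ref{re4.1}; together with the \(K_1^2\) terms, this is the origin of \(C_2\).

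It then remains to show that \((1-\theta)L_2|x|^2+|v|^2\), minus the Itô correction, dominates a positive multiple of \(\frac{\gamma^2}{2}|x|^2+\gamma\langle x,v\rangle+|v|^2\). Using \(|\gamma\langle x,v\rangle|\leq\frac{\gamma^2}{4(1-\theta)L_2}|v|^2+(1-\theta)L_2|x|^2\) and \(L_2>\gamma^2/4\), the choice of \(\theta\) and \(\lambda_\theta\) in \eqref{a} should give exactly \(\lambda_\theta(|x|^2+|v|^2)\) of coercivity. Combined with the bounds \(\frac{\gamma^2}{2}|x|^2+\gamma\langle x,v\rangle+|v|^2\leq\max\{\gamma^2,8\}(|x|^2+|v|^2)\)-type, this yields
\[\mathcal L\mathcal V+\gamma|\nabla_v\mathcal V|^2\leq C_4-2a(1+\mathcal V)=C_4-2G,\]
which is how \(a\) was designed. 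I expect the main obstacle to be precisely this coefficient matching: making the Itô correction \(\gamma|\nabla_v\mathcal V|^2\) fit inside the margins \(\min\{4\theta L_3,2\lambda_\theta,1\}\) and \(\max\{\gamma^2,8\}\). If it does not close as stated, I would track the \(1/8\) prefactor more carefully, since the correction is quadratic in \(\nabla\mathcal V\) and hence carries \(1/64\).

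\textbf{Step 3: from \(\mathcal LF\leq F(C_4-2G)\) to \(C_5-FG\).} We have \(\mathcal LF+FG\leq F(C_4-G)\). Consider first \(G\geq C_4\): the right side is \(\leq0\leq C_5\). Otherwise \(a(1+\mathcal V)<C_4\), so \(\mathcal V<C_4/a-1\) and \(F(C_4-G)\leq e^{C_4/a-1}(C_4-a)\). To get the sharp constant, maximize \(t\mapsto e^{t/a-1}(C_4-t)\) over \(t=G\in[a,C_4]\). This is an elementary one-variable optimization whose value is bounded by \(\Psi(a,C_4)\): it is \(0\) if \(C_4\leq a\), and is of the form \(a\exp(C_4/a-2)\) in the large regime. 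The intermediate regime \(a<C_4<2a\) is handled by the endpoint value. Hence \(\mathcal LF\leq a\vee\Psi(a,C_4)-FG=C_5-FG\).
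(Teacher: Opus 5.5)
Your proposal follows the paper's proof essentially step for step. It uses the same identity \(\mathcal LF=-\frac{\gamma}{8}F\bigl(\langle\nabla U(x),x\rangle+|v|^2-\frac18|2v+\gamma x|^2-2d\bigr)\), the same \(\theta\)-splitting via Assumptions \ref{as4.2}--\ref{as4.3} and Remark \ref{re4.1}, the same comparison \(1+\mathcal V\leq\frac18\max\{\gamma^2,8\}(1+U+|x|^2+|v|^2)\) giving \(\mathcal LF\leq F\bigl(C_4-2G\bigr)\), and the same one-variable maximization producing \(\Psi(a,C_4)\).

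The coefficient matching you were worried about does close, with two small corrections to your sketch. First, the Young step yields coercivity \(\frac{\lambda_\theta}{2}(|x|^2+|v|^2)\), not \(\lambda_\theta(|x|^2+|v|^2)\); this is exactly why \(2\lambda_\theta\) appears in the definition of \(a\). Second, restricting to \(G\geq a\) requires \(\mathcal V\geq0\), which follows from \(U\geq0\) and \(\frac{\gamma^2}{2}|x|^2+\gamma\langle x,v\rangle+|v|^2=\frac12|\gamma x+v|^2+\frac12|v|^2\).
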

\begin{proof}
Although $F$ need not be twice differentiable in the position variable,
it is $C^1$ in $x$ and $C^2$ in $v$. This is sufficient for It\^o's
formula because the position process has finite variation.
A direct computation gives
\[\nabla_xF(x,v)
=
\frac{1}{8}F(x,v)
\left(
2\nabla U(x)+\gamma^2x+\gamma v
\right),
\qquad
\nabla_vF(x,v)
=
\frac{1}{8}F(x,v)(2v+\gamma x),\]
and
\[\Delta_vF(x,v)
=
\frac{1}{8}F(x,v)
\left(
\frac{1}{8}|2v+\gamma x|^2+2d
\right).\]
Substituting these identities into the definition of \(\mathcal L\), we obtain
\begin{align*}
(\mathcal L F)(x,v)
=-\frac{\gamma}{8}F(x,v)
\left(\theta\langle\nabla U(x),x\rangle+D_\theta(x,v)
\right),
\end{align*}
where
\[D_\theta(x,v)
:=
(1-\theta)\langle\nabla U(x),x\rangle
+|v|^2
-\frac{1}{8}|2v+\gamma x|^2
-2d.\]
Set
\[\mathcal D_\theta(x,v)
:=\theta\langle\nabla U(x),x\rangle+D_\theta(x,v).\]
We first consider the region \(|x|\ge K_1\). By Assumption \ref{as4.2}, \eqref{a} and Young's inequality
\begin{equation*}
\gamma\langle x,v\rangle\le (1-\theta)L_2|x|^2+\frac{\gamma^2}{4(1-\theta)L_2}|v|^2,
\end{equation*}
we have
\[D_\theta(x,v)\ge\frac{\lambda_\theta}{2}\bigl(|x|^2+|v|^2\bigr)-2d.\]
We next consider the region \(|x|<K_1\). Using
\[-\frac{\gamma}{2}\langle x,v\rangle
\ge
-\frac{1}{4}|v|^2-\frac{\gamma^2}{4}|x|^2,\]
we obtain
\begin{align*}
D_\theta(x,v)
&=
(1-\theta)\langle\nabla U(x),x\rangle
+\frac{1}{2}|v|^2
-\frac{\gamma}{2}\langle x,v\rangle
-\frac{\gamma^2}{8}|x|^2
-2d\\
&\ge
\frac{1}{4}|v|^2
+
(1-\theta)\langle\nabla U(x),x\rangle
-\frac{3\gamma^2}{8}|x|^2
-2d.
\end{align*}
The bound in Remark \ref{re4.1} implies, for \(|x|\leq K_1\),
\[(1-\theta)\langle\nabla U(x),x\rangle
\geq-(1-\theta)K_1C_1(1+K_1^{\ell+1}).\]
Consequently,
\[D_\theta(x,v)
\ge
\frac{1}{4}\bigl(|x|^2+|v|^2\bigr)-C_2,\]
where \(C_2\) is the second constant in \eqref{global-C-actual}.
Therefore, for any \(x, v\in\R^d\),
\[D_\theta(x,v)\ge \min\left\{\frac{\lambda_\theta}{2}, \frac{1}{4}\right\}\bigl(|x|^2+|v|^2\bigr)-C_2.\]
Combining this with Assumption \ref{as4.3}, we get
\[\mathcal D_\theta(x,v)
\ge\theta L_3 U(x)
+\min\left\{\frac{\lambda_\theta}{2}, \frac{1}{4}\right\}\bigl(|x|^2+|v|^2\bigr)
-C_2-\theta K_2.\]
Moreover, observe that
\[\frac{\gamma^2}{2}|x|^2+\gamma\langle x,v\rangle+|v|^2
\le
\max\left\{\gamma^2, \frac{3}{2}\right\}\bigl(|x|^2+|v|^2\bigr).\]
Hence
\[\mathcal V(x,v)
\le
\frac{1}{4} U(x)
+\frac{1}{16}
\max\left\{2\gamma^2, 3\right\}\bigl(|x|^2+|v|^2\bigr),\]
which implies that
\[1+\mathcal V(x,v)\le\frac{1}{8}
\max\{\gamma^2, 8\}\left(1+U(x)+|x|^2+|v|^2\right).\]
Consequently, we obtain that
\[\mathcal D_\theta(x,v)\ge \frac{16a}{\gamma}\left(1+\mathcal V(x,v)\right)-C_3,
\qquad (x,v)\in\mathbb{R}^{2d}.\]
Here \(C_3\) is defined in \eqref{global-C-actual}.
Then
\[\begin{aligned}
(\mathcal L F)(x,v)+F(x,v)G(x,v)
&=
F(x,v)
\left[
-\frac{\gamma}{8}\mathcal D_\theta(x,v)
+
a(1+ \mathcal V(x,v))
\right]  \\
&\le
F(x,v)
\left[
-a(1+\mathcal V(x,v))
+
\frac{\gamma C_3}{8}
\right].
\end{aligned}\]
Recall from \eqref{global-C-actual} that \(C_4=\gamma C_3/8\).
Since Young's inequality gives
\[\gamma\langle x,v\rangle
\geq-\frac{\gamma^2}{3}|x|^2-\frac{3}{4}|v|^2,\]
we have
\begin{equation}\label{V1}
\mathcal V(x, v)\ge \frac{U(x)}{4}+c_0(|x|^2+|v|^2),
\end{equation}
In particular, \(\mathcal V\geq0\). We now maximize the preceding scalar upper
bound. Consider the scalar function \(u\mapsto\mathrm e^u[C_4-a(1+u)]\) on \([0,\infty)\); its derivative is \(\mathrm e^u[C_4-2a-au]\). If \(C_4\leq a\), the function is nonpositive. If \(a<C_4<2a\), it is decreasing and its supremum is \(C_4-a\). If \(C_4\geq2a\), its unique maximizer is \(u=C_4/a-2\), where its value is \(a\exp(C_4/a-2)\). Consequently,
\[\sup_{u\geq0}\left[\mathrm e^u\{C_4-a(1+u)\}\right]^+=\Psi(a,C_4)\leq C_5.\]
The term \(a\) in \(C_5=a\vee\Psi(a,C_4)\) ensures \(C_5\geq a>0\), so the logarithms in \eqref{radius-1} and \eqref{radius-45} are positive and the normalization in \eqref{epsilon} is well defined.
The preceding estimate gives
\[(\mathcal L F)(x,v)
\le
C_5-F(x,v)G(x,v),
\qquad (x,v)\in\mathbb{R}^{2d}.\]
This proves the claim.
\end{proof}\hfill$\square$

Assumption \ref{as4.1} implies that the drift is locally Lipschitz, so \eqref{eq4.1}
has a pathwise unique local solution. To prove non-explosion, let
\(\tau_n:=\inf\{t\ge0:|X_t|^2+|V_t|^2\ge n^2\}\). Proposition \ref{pr4.1}
implies \(\mathcal L F\leq C_5\). Applying It\^o's formula to the stopped process \((X_{t\wedge\tau_n},V_{t\wedge\tau_n})\), and then taking expectations, gives
\[\mathbb E F(X_{t\wedge\tau_n},V_{t\wedge\tau_n})
\le F(X_0,V_0)+C_5t.\]
By \eqref{V1}, \(F(X_{\tau_n},V_{\tau_n})\ge\exp(c_0n^2)\) on
\(\{\tau_n\le t\}\). Hence
\[\lim_{n\to\infty}\mathbb P(\tau_n\le t)
\le\lim_{n\to\infty}\bigl(F(X_0,V_0)+C_5t\bigr)\exp(-c_0n^2)=0.\]
Thus the solution is global.

The following lemma locates the sublevel set on which the two Foster--Lyapunov remainders may be positive. This compact containment fixes the first radius used in the coupling construction.
\begin{lemma}\label{le4.1}
Suppose that Assumptions \ref{as4.0}, \ref{as4.2}, and \ref{as4.3} hold. Then the set \(\mathcal H\) and radius \(R_1\) defined in \eqref{radius-1} satisfy: \(\mathcal H\) is compact and
\[\mathcal H\subset\{(x,v):|x|^2+|v|^2\leq R_1^2\}^2.\]
\end{lemma}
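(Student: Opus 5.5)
The plan is to combine the lower bound \eqref{V1} with the positivity of \(\mathcal V\) to show that \(FG\) is large outside the ball of radius \(R_1\). Compactness will then follow from closedness together with that boundedness.

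First I show that each summand in the defining inequality of \(\mathcal H\) is controlled separately. By \eqref{V1}, \(\mathcal V\geq0\) on \(E\), so \(F=\mathrm e^{\mathcal V}\geq1>0\) and \(G=a(1+\mathcal V)\geq a>0\). Hence both terms \(F(x,v)G(x,v)\) and \(F(x',v')G(x',v')\) are nonnegative. If \(((x,v),(x',v'))\in\mathcal H\), it follows that \(F(x,v)G(x,v)\leq2C_5\), and likewise for the primed point.

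Second, I bound \(FG\) from below by a function of \(|x|^2+|v|^2\). Since \(U\geq0\) by Assumption \ref{as4.2}, \eqref{V1} gives \(\mathcal V(x,v)\geq c_0(|x|^2+|v|^2)\). Using \(1+\mathcal V\geq1\), this yields
\[F(x,v)G(x,v)\geq a\,\mathrm e^{\mathcal V(x,v)}\geq a\exp\bigl(c_0(|x|^2+|v|^2)\bigr).\]
Combining this with the first step gives \(a\exp(c_0(|x|^2+|v|^2))\leq2C_5\). Because \(C_5\geq a\), the quantity \(\log(2C_5/a)\) is positive, and I obtain \(|x|^2+|v|^2\leq c_0^{-1}\log(2C_5/a)=R_1^2\). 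The same argument applies to \((x',v')\), which proves the inclusion.

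Third, I prove compactness. The functions \(U\) and \(\mathcal V\) are continuous because \(U\in C^1\) by Assumption \ref{as4.0}. Hence \((\xi,\xi')\mapsto F(\xi)G(\xi)+F(\xi')G(\xi')\) is continuous, and \(\mathcal H\) is closed as the preimage of \((-\infty,2C_5]\). Since \(\mathcal H\) is also bounded by the inclusion, the Heine--Borel theorem gives compactness.

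No step here is a real obstacle. The only point needing care is to check that \eqref{V1} was derived using only Assumptions \ref{as4.0}, \ref{as4.2}, and \ref{as4.3}, and not Assumption \ref{as4.1}. This holds, since \eqref{V1} comes from Young's inequality together with \(U\geq0\).
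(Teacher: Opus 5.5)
Your proposal is correct and follows the paper's proof essentially step for step. Both arguments use the nonnegativity of the two summands to get \(FG\leq2C_5\) for each point, the bound \(FG\geq a\,\mathrm e^{\mathcal V}\geq a\exp\bigl(c_0(|x|^2+|v|^2)\bigr)\) from \eqref{V1} to reach the radius \(R_1\), and closedness (by continuity of \(F,G\)) together with boundedness for compactness.
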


\begin{proof}
Let \(\bigl((x,v),(x',v')\bigr)\in \mathcal H\). Both summands in the definition of
\(\mathcal H\) are nonnegative. Hence \(F(x,v)G(x,v)\leq2C_5\).
Since \(G=a(1+\mathcal V)\geq a\),
\[a\exp\{\mathcal V(x,v)\}\leq F(x,v)G(x,v)\leq2C_5.\]
Taking logarithms and using \eqref{V1} gives
\[c_0(|x|^2+|v|^2)\leq\mathcal V(x,v)
\leq\log\frac{2C_5}{a}=c_0R_1^2.\]
Thus \(|x|^2+|v|^2\leq R_1^2\). The same argument applies to
\((x',v')\). The set \(\mathcal H\) is closed because \(F\) and \(G\) are
continuous. It is bounded by the displayed inclusion. Hence it is compact.
\end{proof}\hfill$\square$

The next lemma compares the local force-difference bound with the quadratic lower bound for \(G\). It supplies the absorption estimate outside \(S_R\), including the endpoint case specified by \eqref{growth-regime}.

\begin{lemma}\label{le4.2}
Suppose that Assumptions \ref{as4.0}--\ref{as4.4} and \eqref{growth-regime} hold. For every \(R\geq R_2\), where \(R_2\) is defined in \eqref{radius-2},
\[2\gamma+1+
\frac{|\nabla U(x)-\nabla U(x')|}{\gamma|x-x'|}
\le\frac{1}{6}\bigl(G(x,v)\vee G(x',v')\bigr)\]
whenever at least one of \((x,v)\) and \((x',v')\) lies outside \(S_R\).
The quotient is defined to be zero when \(x=x'\).
\end{lemma}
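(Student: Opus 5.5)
By symmetry I will assume \((x,v)\notin S_R\), so that either \(|x|\geq R\) or \(|v|\geq R_{\rm v}(R)\). It then suffices to show
\[2\gamma+1+\frac{|\nabla U(x)-\nabla U(x')|}{\gamma|x-x'|}\leq\frac{G(x,v)\vee G(x',v')}{6}.\]
The plan is to bound the left side by a quantity of order \(|x|^\ell+|x'|^\ell\), and the right side from below through \eqref{V1} and, when \(\ell=2\), through Assumption \ref{as4.4}.

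\textbf{Bounding the left side.} By Assumption \ref{as4.1}, the quotient is at most \(L_1(1+|x|^\ell+|x'|^\ell)/\gamma\). Write \(M:=\mathcal V(x,v)\vee\mathcal V(x',v')\), so that \(G(x,v)\vee G(x',v')=a(1+M)\).

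\textbf{Lower bound for \(M\) from \eqref{V1}.} Since \(\mathcal V\geq c_0(|x|^2+|v|^2)\), we get \(|x|^2\vee|x'|^2\leq M/c_0\). We also get \(M\geq c_0R^2\): if \(|x|\geq R\) this is immediate, and if \(|v|\geq R_{\rm v}\geq R\) it follows likewise.

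\textbf{Case \(0\leq\ell<2\).} Using the bound on \(|x|^2\vee|x'|^2\),
\[|x|^\ell+|x'|^\ell\leq 2(M/c_0)^{\ell/2}.\]
The target is then
\[2\gamma+1+\frac{L_1}{\gamma}+\frac{2L_1}{\gamma}\Bigl(\frac{M}{c_0}\Bigr)^{\ell/2}\leq\frac{a(1+M)}{6}.\]
I will split \(aM/6\) into two halves of size \(aM/12\).
\begin{itemize}
\item The first half absorbs the constants \(2\gamma+1\). This works because \(M\geq c_0R^2\geq 12(2\gamma+1)/a\) by the first defining term of \(R_2\).
\item The second half absorbs the \(L_1\) terms. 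Since \(M\geq c_0R^2\geq c_0\), we have \(M^{\ell/2}=M\cdot M^{\ell/2-1}\leq M(c_0R^2)^{\ell/2-1}\). The term \(L_1/\gamma\) is handled the same way, with \(1\leq M/(c_0R^2)\).
\end{itemize}
One then needs \((3L_1/\gamma)c_0^{-1}R^{\ell-2}\cdot(\text{const})\leq a/12\). This is where the third term \((36L_1/(ac_0\gamma))^{1/(2-\ell)}\) of \(R_2\) enters. The numerical factors must be checked carefully and may need slight rebalancing, for instance by using \(c_0^{-\ell/2}\leq c_0^{-1}\) (valid since \(c_0<1\)) and \(R\geq1\).

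\textbf{Case \(\ell=2\).} The argument is the same, but now both the position and velocity scales must be used. Assumption \ref{as4.4} with \eqref{V1} gives
\[\mathcal V\geq \frac{L_4}{4}|x|^4-\frac{K_3}{4}+c_0|v|^2.\]
Hence \(L_1(|x|^2+|x'|^2)/\gamma\) is bounded, via Young's inequality, by \(\frac{aL_4}{24}(|x|^4+|x'|^4)+\frac{24L_1^2}{a\gamma^2L_4}\) (up to constants). This explains the corresponding terms in \(B_1\), together with \(aK_3/24\), which compensates the \(-K_3/4\) shift. The quartic parts are absorbed by \(\frac{a}{6}\cdot\frac{L_4}{4}|x|^4\) appearing inside \(\frac{a}{6}M\) — more precisely, by using \(\mathcal V(x,v)+\mathcal V(x',v')\leq 2M\), which costs a factor of \(2\). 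What remains is the constant \(B_1\) (less the \(a/6\) coming from \(G\geq a\)), to be absorbed by \(\frac{a}{12}M\geq\frac{a}{12}c_0R^2\). This is exactly the endpoint formula for \(R_2\).

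\textbf{Main obstacle.} The hard part is the endpoint bookkeeping. One must ensure that the quartic absorption uses only half of \(G/6\), while the other half, of size at least \(\frac{a c_0}{12}R^2\), handles the constants. One also has to get the factor coming from taking the max versus the sum of \(\mathcal V\) correct. I expect to adjust which portion of \(\mathcal V\), quartic or quadratic, is reserved for each term; \(\lambda_\star\) is not needed here.
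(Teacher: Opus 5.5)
Your strategy matches the paper's: bound the quotient by Assumption \ref{as4.1}, and bound \(G\) from below through \eqref{V1}. Below the endpoint you use quadratic growth, and at \(\ell=2\) the quartic lower bound plus Young's inequality. The subcritical branch is correct, and the constants close exactly without rebalancing. From \(M\ge c_0R^2\) and \(R\ge1\) you get \(\frac{L_1}{\gamma}\bigl(1+2(M/c_0)^{\ell/2}\bigr)\le\frac{3L_1}{\gamma c_0}R^{\ell-2}M\). This is at most \(\frac{a}{12}M\) precisely when \(R^{2-\ell}\ge 36L_1/(ac_0\gamma)\). Likewise \(2\gamma+1\le\frac{a}{12}M\) is precisely \(R^2\ge 12(2\gamma+1)/(ac_0)\). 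The paper uses the larger phase-space norm of the two points instead of \(M\), which is only a cosmetic difference.

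At \(\ell=2\), however, the allocation you describe does not reach the stated \(R_2\).

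\begin{itemize}
\item As written, you spend \(\frac{a}{6}M\) on the quartic terms and then \(\frac{a}{12}M\) again on the constants, which double-counts.
\item Your proposed remedy reserves only half of \(\frac16\bigl(G(x,v)\vee G(x',v')\bigr)\) for the quartic absorption. That leaves \(\frac{a}{12}M\ge\frac{a}{96}\bigl(U(x)+U(x')\bigr)\), so Young's inequality must use \(\varepsilon=a\gamma L_4/(96L_1)\).
\item This produces \(\frac{48L_1^2}{a\gamma^2L_4}\) and \(\frac{aK_3}{48}\) in place of the entries \(\frac{24L_1^2}{a\gamma^2L_4}\) and \(\frac{aK_3}{24}\) of \(B_1\).
\item Whenever the \(L_1^2\)-term dominates, as in the Duffing example of Section~\ref{Sec6}, the resulting threshold exceeds \(R_2\). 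The lemma would then not be proved for every \(R\ge R_2\).
\item Your displayed Young coefficient \(\frac{aL_4}{24}\) is also twice what is available.
\end{itemize}

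The missing observation is that no splitting is needed, because \eqref{V1} supplies the potential part and the quadratic part as separate summands. Thus \(\frac16\bigl(G(x,v)\vee G(x',v')\bigr)\ge\frac{1}{12}\bigl(G(x,v)+G(x',v')\bigr)\ge\frac{a}{6}+\frac{a}{48}\bigl(U(x)+U(x')\bigr)+\frac{ac_0}{12}\bigl(s^2+(s')^2\bigr)\), where \(s^2=|x|^2+|v|^2\) and \((s')^2=|x'|^2+|v'|^2\).

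\begin{itemize}
\item The \(U\)-part absorbs \(\frac{L_1}{\gamma}(|x|^2+|x'|^2)\). Use \(r^2\le\varepsilon r^4+(4\varepsilon)^{-1}\) with \(\varepsilon=a\gamma L_4/(48L_1)\) and \(L_4|x|^4\le U(x)+K_3\); what remains is exactly \(B_1\).
\item The quadratic part is at least \(\frac{ac_0}{12}R^2\ge B_1-\frac{a}{6}\) by the endpoint branch of \eqref{radius-2}.
\end{itemize}

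This requires keeping \(c_0|x|^2\) in the lower bound for \(\mathcal V\). Your displayed bound \(\mathcal V\ge\frac{L_4}{4}|x|^4-\frac{K_3}{4}+c_0|v|^2\) discards it. The quadratic piece then yields nothing when the point outside \(S_R\) has \(|x|\ge R\) but small \(|v|\).
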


\begin{proof}
Write \(s^2:=|x|^2+|v|^2\) and \((s')^2:=|x'|^2+|v'|^2\). We first consider \(0\leq\ell<2\). By symmetry, assume \(s\geq s'\). Since at least one point lies outside \(S_R\), the definition \eqref{SRset} gives \(s\geq R\geq1\). Assumption \ref{as4.1} then yields
\[\frac{|\nabla U(x)-\nabla U(x')|}{|x-x'|}\leq L_1(1+2s^\ell)\leq3L_1s^\ell.\]
The first branch of \eqref{radius-2} gives \(2\gamma+1\leq ac_0s^2/12\) and \(3L_1s^\ell/\gamma\leq ac_0s^2/12\). Hence the left-hand side in the statement is at most \(ac_0s^2/6\), while \eqref{lyapunov-functions} and \eqref{V1} give \(G(x,v)\geq ac_0s^2\).

Suppose now that \(\ell=2\). For every \(r\geq0\) and \(\varepsilon>0\), \(r^2\leq\varepsilon r^4+(4\varepsilon)^{-1}\). Taking \(\varepsilon=a\gamma L_4/(48L_1)\) and using \eqref{quartic-lower}, we obtain, for every \(x\in\mathbb R^d\),
\begin{equation}\label{endpoint-x2-absorption}
\frac{L_1}{\gamma}|x|^2\leq\frac{a}{48}U(x)+\frac{aK_3}{48}+\frac{12L_1^2}{a\gamma^2L_4}.
\end{equation}
Assumption \ref{as4.1}, \eqref{endpoint-x2-absorption}, and the definition \eqref{endpoint-B1} therefore give
\begin{equation}\label{endpoint-force-bound}
2\gamma+1+\frac{|\nabla U(x)-\nabla U(x')|}{\gamma|x-x'|}\leq B_1+\frac{a}{48}\bigl(U(x)+U(x')\bigr).
\end{equation}
On the other hand, \eqref{lyapunov-functions} and \eqref{V1} imply
\begin{equation}\label{endpoint-G-bound}
\frac{1}{6}\bigl(G(x,v)\vee G(x',v')\bigr)\geq\frac{a}{6}+\frac{a}{48}\bigl(U(x)+U(x')\bigr)+\frac{ac_0}{12}\bigl(s^2+(s')^2\bigr).
\end{equation}
Indeed, the maximum of two nonnegative numbers is at least half of their sum. Since one point lies outside \(S_R\), \eqref{SRset} and \(R_{\rm v}(R)=R^2\geq R\) give \(s^2+(s')^2\geq R^2\). The endpoint branch of \eqref{radius-2} yields \(B_1\leq a/6+ac_0R^2/12\). Comparing \eqref{endpoint-force-bound} and \eqref{endpoint-G-bound} proves the endpoint estimate and completes the proof.
\end{proof}\hfill$\square$
\begin{remark}\label{rem:ell2}
At \(\ell=2\), a comparison using only the quadratic lower bound \(G\geq ac_0(|x|^2+|v|^2)\) cannot absorb the quadratic position dependence in the force coefficient for arbitrary \(L_1\). Estimate \eqref{endpoint-x2-absorption} instead uses the quartic part of \(U\) for that absorption. The remaining constant is controlled by the quadratic part of \(G\) after the localization radius is chosen through \eqref{radius-2}; no smallness condition on \(L_1\) is required.
\end{remark}

The first local scale in \eqref{eq4.7} is justified by the following bound, valid for every \((x,v)\in\mathbb{R}^{2d}\):
\begin{equation*}
\frac{|\nabla_v F(x,v)|}{F(x,v)}
=
\frac{|2v+\gamma x|}{8}
\le
\frac{2|v|+\gamma|x|}{8}
\le
\frac{\sqrt{2}(\gamma\vee 2)}{8}
\sqrt{|x|^2+|v|^2}.
\end{equation*}
Consequently,
\begin{equation*}
2\sup_{(x,v)\in S_R}
\frac{|\nabla_vF(x,v)|}{F(x,v)}\leq\eta_{1,R}.
\end{equation*}
If \((x,v,x',v')\in \mathcal H\), Lemma \ref{le4.1} gives
\(|x|,|v|,|x'|,|v'|\leq R_1\). Hence
\begin{equation}\label{eta2}
r\bigl((x,v),(x',v')\bigr)
\leq3|x-x'|+\gamma^{-1}|v-v'|
\leq\eta_2.
\end{equation}
The number \(\eta_2\) is independent of \(R\). The definition \eqref{J1-definition} gives \(0<J_{1,R}<\infty\) because \(\phi_R>0\) and \(\eta_2>0\).
Thus \(0<\eta_{3,R}<\infty\). The inequality \(\eta_{3,R}\leq\gamma\phi_R(\eta_2)/32\) from \eqref{eq4.8} imposes no condition on \(R\); it later controls the region characterized by \(|Q_t^{(n)}|<2/n\) and \(|b(W_t^{(n)})|<\gamma|Z_t^{(n)}|/2\).

The following lemma shows that the exponential Lyapunov weight absorbs the factor \(2C_5/\eta_{3,R}\) outside a sufficiently large ball. It is the second estimate used in the subsequent localization proposition.
\begin{lemma}\label{le4.3}
Suppose that Assumptions \ref{as4.0}--\ref{as4.4} and \eqref{growth-regime} hold. Then, for every \(R\geq R_3\), where \(R_3\) is defined in \eqref{radius-3},
\begin{equation*}
\frac{2C_5}{\eta_{3,R}}
\le F(x,v)\vee F(x',v')
\end{equation*}
whenever at least one of \((x,v)\) and \((x',v')\) lies outside \(S_R\). Here \(S_R\)
is defined in \eqref{SRset}, \(\eta_{3,R}\) is defined in \eqref{eq4.8}, and
\(C_5\) is defined in \eqref{global-C-actual}.
\end{lemma}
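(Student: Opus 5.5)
The plan is to take logarithms and reduce the claim to a lower bound on \(\mathcal V\) outside \(S_R\) together with an upper bound on \(\log(2C_5/\eta_{3,R})\). By \eqref{eq4.8}, \(\eta_{3,R}^{-1}=J_{1,R}\vee 32/(\gamma\phi_R(\eta_2))\). For the first entry I use \(\Phi_R(s)\le s\) and the monotonicity of \(h_R\):
\[
J_{1,R}\le\gamma\int_0^{\eta_2}s\,\mathrm e^{h_R(s)}\,\mathrm ds\le\frac{\gamma\eta_2^2}{2}\,\mathrm e^{h_R(\eta_2)}.
\]
The second entry equals \((32/\gamma)\mathrm e^{h_R(\eta_2)}\). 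Hence \(\eta_{3,R}^{-1}\le C_6\mathrm e^{h_R(\eta_2)}\), and therefore
\[
\log\frac{2C_5}{\eta_{3,R}}\le\log(2C_5C_6)+h_R(\eta_2)\le C_7+h_R(\eta_2).
\]
It then suffices to show \(\mathcal V(x,v)\vee\mathcal V(x',v')\ge C_7+h_R(\eta_2)\) whenever one point is outside \(S_R\).

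Next I bound \(h_R(\eta_2)\) explicitly. Since \(\Lambda_R\gamma^2=2\gamma^2\vee L_R\le 2\gamma^2+L_1+2L_1R^\ell\), the quadratic term gives
\[
\frac{\Lambda_R\gamma^2}{8}\eta_2^2\le\frac{\gamma^2\eta_2^2}{4}+\frac{L_1\eta_2^2}{8}+\frac{L_1\eta_2^2}{4}R^\ell=A_1+A_3R^\ell.
\]
For the linear term, with \(R\ge1\), I have \(\eta_{1,R}\gamma\eta_2=\frac{\sqrt2}{4}(\gamma\vee2)\gamma\eta_2\sqrt{R^2+R_{\rm v}^2}\). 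When \(\ell<2\) this equals \(\frac{(\gamma\vee2)\gamma\eta_2}{2}R=A_2R\). When \(\ell=2\), \(\sqrt{R^2+R^4}\le\sqrt2R^2\), so the term is at most \(A_2R^2\). Altogether, \(h_R(\eta_2)\le A_1+A_2R+A_3R^\ell\) for \(\ell<2\), and \(h_R(\eta_2)\le A_1+(A_2+A_3)R^2\) at \(\ell=2\), matching Remark \ref{re4.3}.

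Finally, I lower-bound \(\mathcal V\) at the point outside \(S_R\), say \((x,v)\). For \(\ell<2\), either \(|x|\ge R\) or \(|v|\ge R\), and \eqref{V1} gives \(\mathcal V\ge c_0R^2\). I split \(c_0R^2\) into four quarters. The first branch of \eqref{radius-3} ensures \(c_0R^2/4\ge C_7+A_1\), \(c_0R^2/4\ge A_2R\) (since \(R\ge 4A_2/c_0\)), and \(c_0R^2/4\ge A_3R^\ell\) (since \(R^{2-\ell}\ge 4A_3/c_0\)). This closes the case \(\ell<2\).

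The endpoint \(\ell=2\) is the main obstacle, because only the anisotropic escape is available. I split into two cases.

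\emph{Case \(|x|\ge R\).} By \eqref{V1} and \eqref{quartic-lower}, \(\mathcal V\ge U(x)/4\ge (L_4/4)R^4-K_3/4\).

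\emph{Case \(|v|\ge R^2\).} By \eqref{V1}, \(\mathcal V\ge c_0R^4\).

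In both cases \(\mathcal V\ge\lambda_\star R^4-K_3/4\). It then remains to check \(\lambda_\star R^4-(A_2+A_3)R^2-(C_7+A_1+K_3/4)\ge0\). This is a quadratic inequality in \(R^2\), and it holds precisely when \(R^2\) is at least the positive root, which is the second branch of \eqref{radius-3}. Exponentiating, \(F(x,v)=\mathrm e^{\mathcal V(x,v)}\ge 2C_5/\eta_{3,R}\), which proves Lemma \ref{le4.3}. The care needed is in the constant bookkeeping (that \(\sqrt2\) and \(\Lambda_R\) produce exactly \(A_1,A_2,A_3\)); no new idea is required.
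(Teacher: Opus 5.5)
Your proposal is correct and follows the paper's proof essentially step for step. Like the paper, you bound \(\eta_{3,R}^{-1}\le C_6\mathrm e^{h_R(\eta_2)}\), estimate \(h_R(\eta_2)\) in the two regimes via \(A_1,A_2,A_3\), and compare the result with \(\mathcal V\ge c_0R^2\) (for \(\ell<2\)) or \(\mathcal V\ge\lambda_\star R^4-K_3/4\) (at \(\ell=2\)) outside \(S_R\), using the two branches of \eqref{radius-3}. Your explicit treatment of the second entry \(32/(\gamma\phi_R(\eta_2))\) in \eqref{eq4.8}, which is the source of the \(32/\gamma\) in \(C_6\), makes precise a step the paper leaves implicit.
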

\begin{proof}
Since \(h_R\) is increasing and \(\Phi_R(s)\leq s\),
\[J_{1,R}\leq\gamma\int_0^{\eta_2}s\mathrm e^{h_R(s)}\,\mathrm ds
\leq\frac{\gamma\eta_2^2}{2}\mathrm e^{h_R(\eta_2)}.\]
The definition \eqref{eq4.8} therefore gives
\begin{equation}\label{eta3-growth}
\eta_{3,R}^{-1}\leq C_6\mathrm e^{h_R(\eta_2)}.
\end{equation}
Moreover, \(\Lambda_R\leq2+L_R/(\gamma^2)\). Hence
\begin{equation}\label{h-eta2-two-regimes}
h_R(\eta_2)\leq\begin{cases}A_1+A_2R+A_3R^\ell,&0\leq\ell<2,\\ A_1+(A_2+A_3)R^2,&\ell=2.\end{cases}
\end{equation}
Indeed, \eqref{SRset} gives \(R_{\rm v}(R)=R\) below the endpoint and \(R_{\rm v}(R)=R^2\) at the endpoint; in both cases the corresponding estimate follows from \(\sqrt{R^2+R_{\rm v}(R)^2}\leq\sqrt2R\) or \(\sqrt{R^2+R_{\rm v}(R)^2}\leq\sqrt2R^2\), respectively. If \(0\leq\ell<2\), the first branch of \eqref{radius-3} and \eqref{h-eta2-two-regimes} give
\begin{equation}\label{eta3-absorption-subcritical}
\log\frac{2C_5}{\eta_{3,R}}\leq C_7+A_1+A_2R+A_3R^\ell\leq c_0R^2.
\end{equation}
If \(\ell=2\), the second branch of \eqref{radius-3} is exactly the positive-root condition
\[\lambda_\star R^4-(A_2+A_3)R^2-(C_7+A_1+K_3/4)\geq0.\]
Together with \eqref{eta3-growth}, \eqref{h-eta2-two-regimes}, and \(\log(2C_5C_6)\leq C_7\), this gives
\begin{equation}\label{eta3-absorption-endpoint}
\log\frac{2C_5}{\eta_{3,R}}\leq C_7+A_1+(A_2+A_3)R^2\leq\lambda_\star R^4-\frac{K_3}{4}.
\end{equation}
Now suppose that at least one of $(x,v)$ and $(x',v')$ lies outside $S_R$.
For \(0\leq\ell<2\), the point outside \(S_R\) has either \(|x|\geq R\) or \(|v|\geq R\). Equation \eqref{V1} therefore yields \(F(x,v)\vee F(x',v')\geq\exp(c_0R^2)\), and the conclusion follows from \eqref{eta3-absorption-subcritical}. For \(\ell=2\), a point outside \(S_R\) has either \(|x|\geq R\) or \(|v|\geq R^2\). In the first case, \eqref{V1} and \eqref{quartic-lower} give \(\mathcal V\geq L_4R^4/4-K_3/4\); in the second, \eqref{V1} gives \(\mathcal V\geq c_0R^4\). Since \(\lambda_\star=c_0\wedge(L_4/4)\), both cases imply
\[F(x,v)\vee F(x',v')\geq\exp\!\left(\lambda_\star R^4-\frac{K_3}{4}\right).\]
The endpoint conclusion now follows by exponentiating \eqref{eta3-absorption-endpoint}.
\end{proof}\hfill$\square$

We now combine the preceding estimates. The proposition below is the key deterministic
estimate needed for the Lyapunov-coupling construction. Outside a sufficiently large
ball, the product \(FG\) dominates both the bounded Lyapunov remainder and the local
Lipschitz terms appearing in the coupling inequality.

\begin{proposition}\label{pr4.2}
Suppose that Assumptions \ref{as4.0}--\ref{as4.4} and \eqref{growth-regime} hold. Then \(R_\star\) defined in \eqref{Rstar-explicit} is finite. For every \(R\geq R_\star\),
\(\mathcal H\subset S_R\times S_R\).
Moreover, if at least one of \((x,v)\) and \((x',v')\) lies outside \(S_R\), then
\begin{equation}\label{eqpr42}
\begin{aligned}
&2C_5+
\left(\frac{2C_5}{\eta_{3,R}}+F(x,v)+F(x',v')\right)
\left(2\gamma+1
+\frac{|\nabla U(x)-\nabla U(x')|}{\gamma |x-x'|}\right)\\
\leq &F(x,v)G(x,v)+F(x',v')G(x',v'),
\end{aligned}
\end{equation}
where the quotient is understood to be \(0\) when \(x=x'\). All constants are defined in \eqref{radius-1}, \eqref{radius-2}, \eqref{radius-3}, and \eqref{radius-45}.
\end{proposition}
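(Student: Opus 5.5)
Finiteness of \(R_\star\) is immediate: each of \(R_1,\dots,R_5\) in \eqref{radius-1}, \eqref{radius-2}, \eqref{radius-3}, \eqref{radius-45} is an explicit finite expression. The logarithms are well defined because \(C_5\geq a>0\) (Proposition \ref{pr4.1}); the exponents \(1/(2-\ell)\) are finite in the branch \(0\leq\ell<2\); and at \(\ell=2\) we have \(\lambda_\star>0\) by Assumption \ref{as4.4}. For the inclusion \(\mathcal H\subset S_R\times S_R\), I would invoke Lemma \ref{le4.1}. Any point of \(\mathcal H\) satisfies \(|x|,|v|,|x'|,|v'|\leq R_1<R_\star\leq R\leq R_{\rm v}(R)\), since \(R\geq1\). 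Hence both coordinates lie in \(S_R\).

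For \eqref{eqpr42}, write \(\Theta:=2\gamma+1+|\nabla U(x)-\nabla U(x')|/(\gamma|x-x'|)\) and assume, by symmetry, that \(F(x,v)G(x,v)\geq F(x',v')G(x',v')\). Since \(G=a(1+\mathcal V)\) and \(F=\mathrm e^{\mathcal V}\) are both increasing functions of \(\mathcal V\), the point \((x,v)\) also maximizes \(F\) and \(G\) separately. Let \(F_\vee:=F(x,v)\) and \(G_\vee:=G(x,v)\). I would then bound the three summands on the left separately.

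\emph{Lemma \ref{le4.2}} (valid since \(R\geq R_2\)) gives \(\Theta\leq G_\vee/6\).

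\emph{Lemma \ref{le4.3}} (valid since \(R\geq R_3\)) gives \(2C_5/\eta_{3,R}\leq F_\vee\).

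Together these yield
\[\Bigl(\frac{2C_5}{\eta_{3,R}}+F(x,v)+F(x',v')\Bigr)\Theta\leq 3F_\vee\cdot\frac{G_\vee}{6}=\frac12F_\vee G_\vee.\]

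\emph{The additive term \(2C_5\).} Since one point lies outside \(S_R\), that point satisfies \(|x|^2+|v|^2\geq R^2\). By \eqref{V1}, its weight is at least \(\exp(c_0R^2)\), so \(F_\vee\geq\exp(c_0R^2)\). Because \(R>R_5\), we have \(c_0R^2\geq\log(4C_5/a)\). Combined with \(G_\vee\geq a\), this gives \(F_\vee G_\vee\geq4C_5\), that is, \(2C_5\leq\frac12F_\vee G_\vee\). Adding the two bounds gives the left side \(\leq F_\vee G_\vee\leq F(x,v)G(x,v)+F(x',v')G(x',v')\), which is \eqref{eqpr42}.

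The main point needing care is the bookkeeping of which point realizes the maximum. Lemmas \ref{le4.2} and \ref{le4.3} are stated in terms of \(G\vee G'\) and \(F\vee F'\), and I will use that both maxima are attained at the same point because of the monotone dependence on \(\mathcal V\). I also need to check that the point outside \(S_R\) supplies \(F_\vee\geq\exp(c_0R^2)\) even at \(\ell=2\). This holds because leaving \(S_R\) forces \(|x|\geq R\) or \(|v|\geq R^2\geq R\), and in either case \(|x|^2+|v|^2\geq R^2\). No further difficulty is expected, since all the analytic content is already contained in Lemmas \ref{le4.1}--\ref{le4.3}.
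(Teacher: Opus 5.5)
Your proposal is correct and follows essentially the same route as the paper. Finiteness of \(R_\star\) comes from the explicit formulas, and the inclusion from Lemma~\ref{le4.1}. Lemmas~\ref{le4.3} and \ref{le4.2} then bound the product by \(\frac12(F\vee F')(G\vee G')\), using that \(F\) and \(G\) share a maximizer, and the bound \(FG\geq4C_5\) from \(R>R_5\) absorbs the additive \(2C_5\). Your direct use of \(|x|^2+|v|^2\geq R^2>R_5^2\) at the exterior point is, if anything, slightly cleaner than the paper's statement of \eqref{Pl} in terms of \(S_{R_5}\).
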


\begin{proof}
The finiteness of \(R_\star\) follows from \eqref{radius-2} and \eqref{radius-3}: the powers \(1/(2-\ell)\) occur only when \(\ell<2\), while \(\lambda_\star>0\) makes the endpoint root in \eqref{radius-3} finite. Since \(R\geq R_\star>R_1\) and \(R_{\rm v}(R)\geq R\), Lemma \ref{le4.1} gives
\begin{equation*}
\mathcal H\subset S_R\times S_R.
\end{equation*}
By \eqref{V1},
\(F(x,v)G(x,v)\geq a\exp\{c_0(|x|^2+|v|^2)\}\). Hence
\begin{equation}\label{Pl}
F(x,v)G(x,v)\ge 4C_5,
\qquad (x,v)\notin S_{R_5}.
\end{equation}
Suppose now that at least one point lies outside \(S_R\). Lemma \ref{le4.3}
gives
\[\frac{2C_5}{\eta_{3,R}}+F(x,v)+F(x',v')
\leq3\bigl(F(x,v)\vee F(x',v')\bigr).\]
Combining this inequality with Lemma \ref{le4.2} yields
\begin{align}\label{ph}
&\left(\frac{2C_5}{\eta_{3,R}}+F(x,v)+F(x',v')\right)
\left(2\gamma+1+
\frac{|\nabla U(x)-\nabla U(x')|}{\gamma |x-x'|}\right) \notag\\
&\quad\le\frac{1}{2}\bigl(F(x,v)\vee F(x',v')\bigr)
\bigl(G(x,v)\vee G(x',v')\bigr).
\end{align}
Since the exponential function is strictly increasing, we have
\[F(x,v)\ge F(x',v')\Longleftrightarrow
\mathcal V(x,v)\ge \mathcal V(x',v').\]
On the other hand,
\[G(x,v)\ge G(x',v')\Longleftrightarrow
\mathcal V(x,v)\ge \mathcal V(x',v').\]
Combining the two equivalences gives
\[F(x,v)\ge F(x',v')\Longleftrightarrow
G(x,v)\ge G(x',v').\]
Hence the maxima \(F(x,v)\vee F(x',v')\)
and \(G(x,v)\vee G(x',v')\) are attained at the same point. Consequently,
\begin{equation}\label{sop}
\left(F(x,v)\vee F(x',v')\right)
\left(G(x,v)\vee G(x',v')\right)\le F(x,v)G(x,v)+F(x',v')G(x',v').
\end{equation}
It follows from \eqref{ph} and \eqref{sop} that
\begin{equation}\label{fh}
\begin{aligned}
&\left(\frac{2C_5}{\eta_{3,R}}
+F(x,v)+F(x',v')\right)
\left(2\gamma+1+
\frac{|\nabla U(x)-\nabla U(x')|}{\gamma |x-x'|}\right)\\
\le\frac{1}{2}&\bigl(F(x,v)G(x,v)+F(x',v')G(x',v')\bigr).
\end{aligned}
\end{equation}
Since \(R\geq R_5\), \eqref{Pl} implies
\(F(x,v)G(x,v)+F(x',v')G(x',v')\geq4C_5\).
Combining this and \eqref{fh}, we get \eqref{eqpr42}.
\end{proof}\hfill$\square$

\subsection{Regularized Coupling Estimates}

We now derive the stochastic estimate for the weighted distance along the regularized couplings and then bound its finite-variation integrand. These two lemmas form the direct analytic input to the limiting proof of the main result.

For each \(n\ge2\), set
\[Z_t^{(n)}:=X_t^{(n)}-X_t^{\prime(n)},\qquad
Y_t^{(n)}:=V_t^{(n)}-V_t^{\prime(n)},
\qquad
Q_t^{(n)}:=2Z_t^{(n)}+\gamma^{-1}Y_t^{(n)}.\]
For each \(n\ge2\), we set
\begin{align}
r_t^{(n)}
&:=r\left((X_t^{(n)},V_t^{(n)}),
(X_t^{\prime(n)},V_t^{\prime(n)})\right)
=|Z_t^{(n)}|+|Q_t^{(n)}|,\quad \text{and}
\label{rtn}\\
\rho_t^{(n)}
&:=\rho_R\left((X_t^{(n)},V_t^{(n)}),
(X_t^{\prime(n)},V_t^{\prime(n)})\right)
=f(r_t^{(n)})\mathcal E_t^{(n)},\quad \text{where}
\label{rhotn}\\
\mathcal E_t^{(n)}
&:=1+\varepsilon_R F(X_t^{(n)},V_t^{(n)})
+\varepsilon_R F(X_t^{\prime(n)},V_t^{\prime(n)}).
\label{ETn}
\end{align}

The next lemma applies the It\^o--Tanaka formula to \(f(r_t^{(n)})\mathcal E_t^{(n)}\). It isolates a continuous local martingale and the explicit finite-variation integrand that must be estimated case by case.
\begin{lemma}\label{Le1n}
Suppose that Assumptions \ref{as4.0}--\ref{as4.4} and \eqref{growth-regime} hold. Fix \(R\ge R_\star\),
\(\delta\in(0,R)\), and \(\beta\in(0,2)\). Let \(u_{\rm r}\) and \(u_{\rm s}\)
satisfy \eqref{rc}, and let the regularized coupling be given by
\eqref{regu-coup}.
Then, for every \(t\geq 0\),
\[\rho_t^{(n)}\leq\rho_0^{(n)}+\int_0^t H_s^{(n)}\,ds
+M_t^{\rho,(n)},\]
where \((M_t^{\rho,(n)})_{t\geq 0}\) is a continuous local martingale and
\begin{equation}\label{Hn}
\begin{aligned}
H_t^{(n)}:=&
\mathcal E_t^{(n)}f'_-(r_t^{(n)})
\Big[-2\gamma|Z_t^{(n)}|
+2\gamma|Q_t^{(n)}|-2\gamma\mathbf e(Q_t^{(n)})^{\mathrm T}Z_t^{(n)}\\
&\hspace{2.5cm}-\gamma^{-1}\mathbf e(Q_t^{(n)})^{\mathrm T}\bigl(\nabla U(X_t^{(n)})-\nabla U(X_t^{\prime(n)})
\bigr)\Big]\\
&+\varepsilon_R f(r_t^{(n)})
\left[2C_5-F(X_t^{(n)},V_t^{(n)})G(X_t^{(n)},V_t^{(n)})-
F(X_t^{\prime(n)},V_t^{\prime(n)})G(X_t^{\prime(n)},V_t^{\prime(n)})\right]\\
&+4\gamma^{-1}\alpha_{n,\beta}\mathcal E_t^{(n)}(u_{{\rm r},t}^{(n)})^2
|\mathbf e_n(Q_t^{(n)})|^2\left[
f''(r_t^{(n)})+\gamma\eta_{1,R}f'_-(r_t^{(n)})\right].
\end{aligned}
\end{equation}
\end{lemma}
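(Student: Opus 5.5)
We plan to apply Itô's product rule to $\rho_t^{(n)}=f(r_t^{(n)})\mathcal E_t^{(n)}$, after computing the semimartingale decompositions of the two factors separately. We drop the superscript $(n)$ below.

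For the weight, Itô's formula for $F$ (which is $C^1$ in $x$ and $C^2$ in $v$; this suffices since $X$ has finite variation) gives
\[
\mathrm dF(X_t,V_t)=\mathcal LF(X_t,V_t)\,\mathrm dt+\mathrm dM^F_t .
\]
This holds because each coordinate of \eqref{regu-coup} has total diffusion matrix $2\gamma\mathbf I_d$, by \eqref{AAI} and $u_{\rm r}^2+u_{\rm s}^2=1$. The same holds for $F(X',V')$. Proposition \ref{pr4.1} then yields
\[
\mathrm d\mathcal E_t\le \varepsilon_R\bigl[2C_5-FG-F'G'\bigr]\,\mathrm dt+\mathrm dM^{\mathcal E}_t .
\]

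For the distance, $\mathrm dZ_t=Y_t\,\mathrm dt=\gamma(Q_t-2Z_t)\,\mathrm dt$ is of finite variation. Hence
\[
\mathrm d|Z_t|\le \gamma\,\mathbf e(Z_t)^{\mathrm T}(Q_t-2Z_t)\,\mathrm dt\le(\gamma|Q_t|-2\gamma|Z_t|)\,\mathrm dt,
\]
using $|Z|$ Lipschitz and the convention $\mathbf e(\mathbf0)=\mathbf0$ (one-sided derivative bound). By Lemma \ref{semimartingale}'s computation at level $n$, $\mathrm dQ_t=b(W_t)\,\mathrm dt+\mathrm dM_t$ with $\mathrm d\langle M\rangle_t=8\gamma^{-1}\alpha u_{\rm r}^2\mathbf e_n\mathbf e_n^{\mathrm T}\,\mathrm dt$. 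Since $\mathbf e_n$ vanishes for $|Q|\le 1/n$, $|Q|$ is smooth wherever the noise acts, and the second-order term of $|Q|$ vanishes because $\mathbf e_n\parallel Q$ kills the tangential Hessian. This gives
\[
\mathrm d|Q_t|=\mathbf e(Q_t)^{\mathrm T}b(W_t)\,\mathrm dt+\mathbf e(Q_t)^{\mathrm T}\mathrm dM_t,
\]
with $b=\gamma Q-2\gamma Z-\gamma^{-1}(\nabla U(X)-\nabla U(X'))$, so $\mathbf e(Q)^{\mathrm T}\gamma Q=\gamma|Q|$. Near $Q=0$ there is no noise, so the relevant bound is the one-sided derivative inequality of the Lipschitz function $|\cdot|$ along an absolutely continuous path.

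Summing the two contributions to $r=|Z|+|Q|$, the drift is bounded above by the bracket in \eqref{Hn}, and
\[
\mathrm d\langle r\rangle_t=8\gamma^{-1}\alpha u_{\rm r}^2|\mathbf e_n|^2\,\mathrm dt .
\]
Apply Itô–Tanaka to the concave $f$ (its distributional second derivative $\mu_f$ is given above), using $\mu_f\le f''\,\mathrm dx$ on $(0,\infty)$ by \eqref{R1}–\eqref{R2}. Since $f'_-\ge0$ multiplies an upper bound on the drift, we get
\[
\mathrm df(r_t)\le f'_-(r_t)[\ldots]\,\mathrm dt+\tfrac12 f''(r_t)\,\mathrm d\langle r\rangle_t+\mathrm dM^f_t .
\]
Here we use the occupation-time formula to convert $\int\mu_f(\mathrm da)L^a_t$ into $\int f''(r)\,\mathrm d\langle r\rangle$ plus the nonpositive atom at $\eta_{4,R}$. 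The remaining atoms are null or absent, and $r\ge0$ makes $(-\infty,0]$ irrelevant.

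Finally, the product rule gives
\[
\mathrm d\rho=\mathcal E\,\mathrm df+f\,\mathrm d\mathcal E+\mathrm d\langle f(r),\mathcal E\rangle .
\]
The key step is bounding the covariation. The noise of $F(X,V)$ is $\sqrt{2\gamma}\,\nabla_vF^{\mathrm T}(u_{\rm r}\,\mathrm dB^1+u_{\rm s}\,\mathrm dB^2)$, while $r$'s martingale part is driven only by $\sqrt{2\gamma^{-1}}u_{\rm r}\,\mathbf e^{\mathrm T}[(\mathbf I-\mathsf R)\,\mathrm dB^1-\Sigma\,\mathrm dB^3]$, with $(\mathbf I-\mathsf R)\mathbf e=2\alpha|\mathbf e_n|^2\mathbf e_n$ in the direction of $\mathbf e_n$. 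We get
\[
|\mathrm d\langle r,\varepsilon_RF\rangle|\le 4\alpha u_{\rm r}^2|\mathbf e_n|^2\,\varepsilon_R|\nabla_vF|\,\mathrm dt .
\]
Summing both copies and using that $u_{\rm r}>0$ forces both points into $S_R$, the bound $2|\nabla_vF|/F\le\eta_{1,R}$ on $S_R$ gives
\[
|\mathrm d\langle f(r),\mathcal E\rangle|\le 4\alpha u_{\rm r}^2|\mathbf e_n|^2f'_-\,\mathcal E\,\tfrac{\eta_{1,R}}{2}\,\mathrm dt .
\]
With $|\mathbf e_n|\le1$ and $\alpha\le 1$, this is dominated by the term $4\gamma^{-1}\alpha\,\mathcal E u_{\rm r}^2|\mathbf e_n|^2\gamma\eta_{1,R}f'_-$ in \eqref{Hn}. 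The constants need careful bookkeeping here, but the paper chose $\eta_{1,R}$ with the factor $2$ precisely for this; this covariation bookkeeping is the main obstacle. Collecting the terms yields the bound with the local martingale $M^{\rho}=\int\mathcal E\,\mathrm dM^f+\int f\,\mathrm dM^{\mathcal E}$.
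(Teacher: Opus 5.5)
Your proposal is correct and follows the paper's proof essentially step by step: It\^o's formula for $F$ with Proposition~\ref{pr4.1}, the decompositions of $|Z^{(n)}|$ and $|Q^{(n)}|$ using that $\mathbf e_n$ vanishes near $\mathbf 0$ and is radial, It\^o--Tanaka with \eqref{R1}--\eqref{R2}, the product rule, and the bound $2|\nabla_vF|/F\leq\eta_{1,R}$ on $S_R$; the paper only makes your $|Q^{(n)}|$ step rigorous by applying It\^o's formula to $\sqrt{|q|^2+\varepsilon^2}$, whose second-order term is $O(\varepsilon^2n^3)$ and vanishes as $\varepsilon\downarrow0$. The one step you leave implicit is the covariation with the primed weight, whose noise also carries $\mathsf R_{n,\beta}$ and $\Sigma_{n,\beta}$: there the identity $(\mathbf I_d-\mathsf R_{n,\beta})\mathsf R_{n,\beta}^{\mathrm T}-\Sigma_{n,\beta}\Sigma_{n,\beta}^{\mathrm T}=-(\mathbf I_d-\mathsf R_{n,\beta})$ from \eqref{AAI} gives exactly $-4\alpha_{n,\beta}(u_{{\rm r},t}^{(n)})^2|\mathbf e_n(Q_t^{(n)})|^2\mathbf e(Q_t^{(n)})^{\mathrm T}\nabla_vF(X_t^{\prime(n)},V_t^{\prime(n)})$ (estimating the $B^{1,n}$- and $B^{3,n}$-parts separately can lose a factor up to $3$ on that term, which your factor-$2$ slack does not cover), after which your per-copy bound holds and your resulting constant $\eta_{1,R}/2$ is in fact sharper than the paper's $\eta_{1,R}$.
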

\begin{proof}
Since the proof is technical, we divide it into the following four steps.

\medskip
\noindent
\textbf{Step 1.} Equations for \(Z^{(n)}\), \(Y^{(n)}\), and \(Q^{(n)}\).
Subtracting the two position equations in \eqref{regu-coup} gives \(\d Z_t^{(n)}=Y_t^{(n)}\,\d t\).
Since \(Y_t^{(n)}=\gamma Q_t^{(n)}
-2\gamma Z_t^{(n)}\),
we obtain
\begin{equation}\label{Z-eq}
\d Z_t^{(n)}=\left[\gamma Q_t^{(n)}-2\gamma Z_t^{(n)}\right]\d t.
\end{equation}
Subtracting the two velocity equations gives
\begin{equation}\label{Y-eq}
\begin{aligned}
\d Y_t^{(n)}=&-\gamma Y_t^{(n)}\,\d t
-\bigl(\nabla U(X_t^{(n)})-\nabla U(X_t^{\prime(n)})\bigr)\,\d t+\sqrt{2\gamma}\,u_{{\rm r},t}^{(n)}
\Big(\mathbf I_d- \mathsf{R}_{n,\beta}(Q_t^{(n)})\Big)\d B_t^{1,n}\\
&-\sqrt{2\gamma}\,u_{{\rm r},t}^{(n)}\Sigma_{n,\beta}(Q_t^{(n)})\,\d B_t^{3,n}.
\end{aligned}
\end{equation}
Using \eqref{Z-eq}, \eqref{Y-eq}, we find
\begin{equation}\label{Q-eq}
\begin{aligned}
\d Q_t^{(n)}
=&\Big[\gamma Q_t^{(n)}
-2\gamma Z_t^{(n)}-\gamma^{-1}
\bigl(\nabla U(X_t^{(n)})-\nabla U(X_t^{\prime(n)})\bigr)\Big]\d t\\
&+\sqrt{2\gamma^{-1}}u_{{\rm r},t}^{(n)}\left(\mathbf I_d- \mathsf{R}_{n,\beta}(Q_t^{(n)})\right)\d B_t^{1,n}-\sqrt{2\gamma^{-1}}u_{{\rm r},t}^{(n)}\Sigma_{n,\beta}(Q_t^{(n)})\d B_t^{3,n}.
\end{aligned}
\end{equation}

\medskip
\noindent
\textbf{Step 2.} Equations for \(|Z^{(n)}|\) and \(|Q^{(n)}|\). The process \(Z^{(n)}\) is absolutely continuous. The chain rule for absolutely continuous vector-valued paths, equivalently the
standard smooth approximation argument used in
\cite[Lemma 3.1]{EGZKinetic2019}, yields, for Lebesgue-a.e.
\(t\geq0\),
\[\d |Z_t^{(n)}|
=\mathbf e(Z_t^{(n)})^{\mathrm T}\d Z_t^{(n)}.\]
Using \eqref{Z-eq}, we obtain
\begin{equation}\label{Z-abs}
\begin{aligned}
\d |Z_t^{(n)}|
&=\left[-2\gamma|Z_t^{(n)}|
+\gamma\mathbf e(Z_t^{(n)})^{\mathrm T}Q_t^{(n)}
\right]\d t\\
&=\left[-2\gamma|Z_t^{(n)}|+\gamma|Q_t^{(n)}|\right]\d t-\d K_t^{Z,(n)}.
\end{aligned}
\end{equation}
where
\[K_t^{Z,(n)}:=\gamma\int_0^t\left[
|Q_s^{(n)}|-\mathbf e(Z_s^{(n)})^{\mathrm T}Q_s^{(n)}\right]\d s.\]
is continuous and nondecreasing.

Since \(\mathsf{R}_{n,\beta}\) is symmetric, \eqref{AAI} implies
\begin{equation}\label{cov-id}
\begin{aligned}
&\left(\mathbf I_d- \mathsf{R}_{n,\beta}(q)\right)
\left(\mathbf I_d-\mathsf{R}_{n,\beta}(q)\right)^{\mathrm T}
+ \Sigma_{n,\beta}(q)\Sigma_{n,\beta}(q)^{\mathrm T}\\
=&\left(\mathbf I_d-\mathsf{R}_{n,\beta}(q)\right)^2
+\mathbf I_d-\mathsf{R}_{n,\beta}(q)^2=2\bigl(\mathbf I_d-\mathsf{R}_{n,\beta}(q)\bigr)
=4\alpha_{n,\beta}\mathbf e_n(q)\mathbf e_n(q)^{\mathrm T}.
\end{aligned}
\end{equation}
Consequently, the matrix-valued quadratic variation of \(Q^{(n)}\) is
\begin{equation}\label{Q-qv}
\d\langle Q^{(n)}\rangle_t=8\gamma^{-1}\alpha_{n,\beta}(u_{{\rm r},t}^{(n)})^2\mathbf e_n(Q_t^{(n)})\mathbf e_n(Q_t^{(n)})^{\mathrm T}\d t.
\end{equation}
For \(\varepsilon>0\), set \(\psi_\varepsilon(q):=\sqrt{|q|^2+\varepsilon^2}\).
Then
\[\nabla\psi_\varepsilon(q)=\frac{q}{\sqrt{|q|^2+\varepsilon^2}},\qquad \nabla^2\psi_\varepsilon(q)
=\frac{\mathbf I_d}{\sqrt{|q|^2+\varepsilon^2}}-\frac{qq^{\mathrm T}}{(|q|^2+\varepsilon^2)^{\frac{3}{2}}}.\]
For \(q\neq \mathbf 0\), \(\mathbf e_n(q)=\chi_n(|q|)\mathbf e(q)\), and hence \(\langle q, \mathbf e_n(q)\rangle^2
=|q|^2|\mathbf e_n(q)|^2\).
It follows that
\begin{equation}\label{hessian}
\operatorname{tr}
\left[\nabla^2\psi_\varepsilon(q)\mathbf e_n(q)\mathbf e_n(q)^{\mathrm T}\right]=\frac{|\mathbf e_n(q)|^2}{\sqrt{|q|^2+\varepsilon^2}}
-\frac{\langle q, \mathbf e_n(q)\rangle^2}
{(|q|^2+\varepsilon^2)^{\frac{3}{2}}}=\frac{\varepsilon^2|\mathbf e_n(q)|^2}{(|q|^2+\varepsilon^2)^{\frac{3}{2}}}.
\end{equation}
The same equality holds at \(q= \mathbf 0\), because \(\mathbf e_n(\mathbf 0)=\mathbf 0\).
Furthermore, \(\mathbf e_n(q)=\mathbf 0\) whenever \(|q|\leq n^{-1}\). Thus
\[0\leq\frac{\varepsilon^2|\mathbf e_n(q)|^2}
{(|q|^2+\varepsilon^2)^{\frac{3}{2}}}\leq\varepsilon^2n^3,\qquad q\in\mathbb R^d.\]
Apply It\^o's formula to \(\psi_\varepsilon(Q_t^{(n)})\), first after stopping
the processes in a compact set. By \eqref{Q-qv} and \eqref{hessian}, the
second-order term on \([0,T]\) is bounded by
\[\frac{4\alpha_{n,\beta}}{\gamma}
\int_0^T(u_{{\rm r},s}^{(n)})^2
\frac{\varepsilon^2|\mathbf e_n(Q_s^{(n)})|^2}
{(|Q_s^{(n)}|^2+\varepsilon^2)^{\frac{3}{2}}}
\d s\leq\frac{4\alpha_{n,\beta}}{\gamma}T\varepsilon^2n^3,\]
which tends to zero as \(\varepsilon\downarrow0\), for fixed \(n\). The drift
terms converge by dominated convergence because \(|\nabla\psi_\varepsilon|\le1\).
The local martingale terms converge uniformly on compact time intervals in
probability after localization: their quadratic variations converge by dominated
convergence, since \(\lim_{\varepsilon\downarrow0}\nabla\psi_\varepsilon(q)=\mathbf e(q)\) for
\(q\ne\mathbf0\), while the diffusion coefficient in \eqref{Q-eq} vanishes at
\(q=\mathbf0\). Removing the localization and letting
\(\varepsilon\downarrow0\) therefore gives
\begin{equation}\label{Q-abs}
\begin{aligned}
\d |Q_t^{(n)}|=\Big[\gamma|Q_t^{(n)}|
&-2\gamma\mathbf e(Q_t^{(n)})^{\mathrm T}Z_t^{(n)}
\\
&-\gamma^{-1}\mathbf e(Q_t^{(n)})^{\mathrm T}
\left(\nabla U(X_t^{(n)})-\nabla U(X_t^{\prime(n)})\right)
\Big]\d t+\d M_t^{Q,(n)},
\end{aligned}
\end{equation}
where
\begin{equation*}
\begin{aligned}
\d M_t^{Q,(n)}=&\sqrt{2\gamma^{-1}}u_{{\rm r},t}^{(n)}
\mathbf e(Q_t^{(n)})^{\mathrm T}
\left(\mathbf I_d-\mathsf{R}_{n,\beta}(Q_t^{(n)})\right)\d B_t^{1,n}\\
&-\sqrt{2\gamma^{-1}}u_{{\rm r},t}^{(n)}
\mathbf e(Q_t^{(n)})^{\mathrm T}\Sigma_{n,\beta}(Q_t^{(n)})\d B_t^{3,n}.
\end{aligned}
\end{equation*}
By the independence of \(B^{1,n}\) and \(B^{3,n}\),
\eqref{cov-id} gives
\begin{equation*}
\begin{aligned}
\d\langle M^{Q,(n)}\rangle_t
&=2\gamma^{-1}(u_{{\rm r},t}^{(n)})^2
\mathbf e(Q_t^{(n)})^{\mathrm T}
\left[\left(\mathbf I_d- \mathsf{R}_{n,\beta}(Q_t^{(n)})\right)^2	+
\Sigma_{n,\beta}(Q_t^{(n)})\Sigma_{n,\beta}(Q_t^{(n)})^{\mathrm T}\right]
\mathbf e(Q_t^{(n)})\d t\\
&=8\gamma^{-1}\alpha_{n,\beta}(u_{{\rm r},t}^{(n)})^2
|\mathbf e_n(Q_t^{(n)})|^2\d t.
\end{aligned}
\end{equation*}

\medskip
\noindent
\textbf{Step 3.} The covariation between \(f(r^{(n)})\) and \(\mathcal E^{(n)}\).
Combining \eqref{Z-abs} and \eqref{Q-abs}, we obtain
the exact semimartingale decomposition
\begin{equation}\label{r-eq}
\begin{aligned}
\d r_t^{(n)}=&
\Big[\!-2\gamma|Z_t^{(n)}|
+2\gamma|Q_t^{(n)}|-2\gamma\mathbf e(Q_t^{(n)})^{\mathrm T}Z_t^{(n)}\\
&-\gamma^{-1}\mathbf e(Q_t^{(n)})^{\mathrm T}
\left(\nabla U(X_t^{(n)})-\nabla U(X_t^{\prime(n)})\right)\!\Big]\d t+\d M_t^{Q,(n)}
-\d K_t^{Z,(n)}.
\end{aligned}
\end{equation}
Since \(K_t^{Z,(n)}\) and \(|Z^{(n)}|\) are of finite variation, the local martingale part of \(r^{(n)}\) is \(M^{Q,(n)}\). Therefore,
\begin{equation}\label{r-qv}
\d\langle r^{(n)}\rangle_t=
8\gamma^{-1}\alpha_{n,\beta}(u_{{\rm r},t}^{(n)})^2
|\mathbf e_n(Q_t^{(n)})|^2\d t.
\end{equation}
Let \(\mu_f\) be the distributional second derivative of the concave
extension of \(f\). The generalized It\^o--Tanaka formula
\cite[Theorem~29.5(ii), p.~665]{OK} gives
\[f(r_t^{(n)})=f(r_0^{(n)})+\int_0^t f'_-(r_s^{(n)})\d r_s^{(n)}+\frac{1}{2}\int_{\mathbb R}L_t^a(r^{(n)})\mu_f(\d a),\]
where \(L_t^a(r^{(n)})\) is the right-continuous local time of
\(r^{(n)}\) at \(a\). On the intervals
\((0,\eta_2)\) and \((\eta_2,\eta_{4,R})\),
\(\mu_f(\d a)=f''(a)\d a\). Since local time is non-negative, \eqref{R1}, \eqref{R2} and the occupation-time formula \cite[Chapter~VI, Corollary~1.6, p.~224]{DR} yield
\[\frac{1}{2}\int_{\mathbb R}L_t^a(r^{(n)})\mu_f(\d a)
\leq\frac{1}{2}\int_0^t f''(r_s^{(n)})\d\langle r^{(n)}\rangle_s.\]
Moreover, by the same occupation-time formula, we have
\[\int_0^t\1_{\{r_s^{(n)}\in\{\eta_2,\eta_{4,R}\}\}}
\d\langle r^{(n)}\rangle_s=0.\]
Because \(f'_-\geq0\) and \(K_t^{Z,(n)}\) is nondecreasing, the term \(-f'_-(r_t^{(n)})\,\d K_t^{Z,(n)}\) is non-positive and may
be discarded in an upper bound. Using \eqref{r-eq} and
\eqref{r-qv}, we therefore obtain
\begin{equation}\label{f-r}
\begin{aligned}
\d f(r_t^{(n)})\leq f'_-(r_t^{(n)})\Big[\!&-2\gamma|Z_t^{(n)}|+2\gamma|Q_t^{(n)}|-2\gamma\mathbf e(Q_t^{(n)})^{\mathrm T}Z_t^{(n)}\\
&-\gamma^{-1}\mathbf e(Q_t^{(n)})^{\mathrm T}
\left(\nabla U(X_t^{(n)})-\nabla U(X_t^{\prime(n)})
\right)\Big]\d t\\
&+4\gamma^{-1}\alpha_{n,\beta}(u_{{\rm r},t}^{(n)})^2|\mathbf e_n(Q_t^{(n)})|^2
f''(r_t^{(n)})\d t+f'_-(r_t^{(n)})\d M_t^{Q,(n)}.
\end{aligned}
\end{equation}

Next we estimate the evolution of \(\mathcal E_t^{(n)}\). By It\^o's formula applied to \(F(X_t^{(n)},V_t^{(n)})\) and \(F(X_t^{\prime(n)},V_t^{\prime(n)})\), and by the marginal property guaranteed by Theorem \ref{thwcc}, both coordinate processes have generator \(\mathcal L\) defined in \eqref{eq4.3}. Therefore
\begin{equation*}
\d \mathcal E_t^{(n)}=\varepsilon_R(\mathcal LF)(X_t^{(n)},V_t^{(n)})\d t+
\varepsilon_R(\mathcal LF)(X_t^{\prime(n)},V_t^{\prime(n)})\d t+\d\widetilde M_t^{(n)},
\end{equation*}
where
\begin{equation*}
\begin{aligned}
&\d\widetilde M_t^{(n)}=\varepsilon_R\sqrt{2\gamma}
\nabla_vF(X_t^{(n)},V_t^{(n)})^{\mathrm T}
\left[u_{{\rm r},t}^{(n)}\,\d B_t^{1,n}
+u_{{\rm s},t}^{(n)}\,\d B_t^{2,n}\right]\\
&+\varepsilon_R\sqrt{2\gamma}\nabla_vF(X_t^{\prime(n)},V_t^{\prime(n)})^{\mathrm T}\left[u_{{\rm r},t}^{(n)}\mathsf{R}_{n,\beta}(Q_t^{(n)})\d B_t^{1,n}+u_{{\rm s},t}^{(n)}\d B_t^{2,n}+u_{{\rm r},t}^{(n)} \Sigma_{n,\beta}(Q_t^{(n)})\d B_t^{3,n}\right].
\end{aligned}
\end{equation*}
Proposition \ref{pr4.1} gives
\begin{equation}\label{E-bound}
\begin{aligned}
\d \mathcal E_t^{(n)}\leq{}&\varepsilon_R
\left[2C_5-F(X_t^{(n)},V_t^{(n)})G(X_t^{(n)},V_t^{(n)})\right.\\
&\left.\hspace{26mm}-F(X_t^{\prime(n)},V_t^{\prime(n)})G(X_t^{\prime(n)},V_t^{\prime(n)})\right]\d t+\d\widetilde M_t^{(n)}.
\end{aligned}
\end{equation}
Since \(M^{Q,(n)}\) has no component driven by \(B^{2,n}\), the \(B^{2,n}\)-part of \(\widetilde M^{(n)}\) does not contribute to
\(\d\langle M^{Q,(n)},\widetilde M^{(n)}\rangle_t\). Moreover, the \(B^{1,n}\)-part of \(M^{Q,(n)}\) has zero covariation with the \(B^{3,n}\)-part of \(\widetilde M^{(n)}\), and conversely. Hence only the pairs driven by \(B^{1,n}\) and \(B^{3,n}\), respectively, contribute. For notational clarity, all matrices below are evaluated at \(Q_t^{(n)}\). The contribution of the \(B^{1,n}\)-terms is
\[\begin{aligned}
&\sqrt{2\gamma^{-1}}u_{{\rm r},t}^{(n)}
\mathbf e(Q_t^{(n)})^{\mathrm T}
\left(\mathbf I_d-\mathsf{R}_{n,\beta}(Q_t^{(n)})\right)\\
&\quad\times\left(\varepsilon_R\sqrt{2\gamma}\,u_{{\rm r},t}^{(n)}
\left[\nabla_vF(X_t^{(n)},V_t^{(n)})+\mathsf{R}_{n,\beta}(Q_t^{(n)})^{\mathrm T}\nabla_vF(X_t^{\prime(n)},V_t^{\prime(n)})\right]\right)\\
=&2\varepsilon_R(u_{{\rm r},t}^{(n)})^2\mathbf e(Q_t^{(n)})^{\mathrm T}(\mathbf I_d-\mathsf{R}_{n,\beta}(Q_t^{(n)}))
\left[\nabla_vF(X_t^{(n)},V_t^{(n)})
+\mathsf{R}_{n,\beta}(Q_t^{(n)})^{\mathrm T}\nabla_vF(X_t^{\prime(n)},V_t^{\prime(n)})\right].
\end{aligned}\]
Similarly, the contribution of the \(B^{3,n}\)-terms is
\[\begin{aligned}
&-\sqrt{2\gamma^{-1}}u_{{\rm r},t}^{(n)}
\mathbf e(Q_t^{(n)})^{\mathrm T}\Sigma_{n,\beta}(Q_t^{(n)})\Bigl(
\varepsilon_R\sqrt{2\gamma}\,u_{{\rm r},t}^{(n)}\Sigma_{n,\beta}(Q_t^{(n)})^{\mathrm T}\nabla_vF(X_t^{\prime(n)},V_t^{\prime(n)})\Bigr)\\
=&-2\varepsilon_R(u_{{\rm r},t}^{(n)})^2\mathbf e(Q_t^{(n)})^{\mathrm T}\Sigma_{n,\beta}(Q_t^{(n)})\Sigma_{n,\beta}(Q_t^{(n)})^{\mathrm T}\nabla_vF(X_t^{\prime(n)},V_t^{\prime(n)}).
\end{aligned}\]
Combining these two contributions gives
\[\begin{aligned}
\d\langle M^{Q,(n)},\widetilde M^{(n)}\rangle_t=&2\varepsilon_R(u_{{\rm r},t}^{(n)})^2
\mathbf e(Q_t^{(n)})^{\mathrm T}(\mathbf I_d-\mathsf{R}_{n,\beta}(Q_t^{(n)}))\\
&\quad\times\left[\nabla_vF(X_t^{(n)},V_t^{(n)})
+\mathsf{R}_{n,\beta}(Q_t^{(n)})^{\mathrm T}
\nabla_vF(X_t^{\prime(n)},V_t^{\prime(n)})\right]\,\d t\\
&-2\varepsilon_R(u_{{\rm r},t}^{(n)})^2\mathbf e(Q_t^{(n)})^{\mathrm T}\Sigma_{n,\beta}(Q_t^{(n)})\Sigma_{n,\beta}(Q_t^{(n)})^{\mathrm T}
\nabla_vF(X_t^{\prime(n)},V_t^{\prime(n)})\,\d t.
\end{aligned}\]
Since \(\mathsf{R}_{n,\beta}(Q_t^{(n)})\) is symmetric, \eqref{AAI} gives
\[(\mathbf I_d-\mathsf{R}_{n,\beta}(Q_t^{(n)}))
\mathsf{R}_{n,\beta}(Q_t^{(n)})^{\mathrm T}
-\Sigma_{n,\beta}(Q_t^{(n)})\Sigma_{n,\beta}(Q_t^{(n)})^{\mathrm T}=-(\mathbf I_d-\mathsf{R}_{n,\beta}(Q_t^{(n)})).\]
Substituting this identity into the preceding expression yields
\[\begin{aligned}
\d\langle M^{Q,(n)},\widetilde M^{(n)}\rangle_t
&=2\varepsilon_R(u_{{\rm r},t}^{(n)})^2\mathbf e(Q_t^{(n)})^{\mathrm T}
\bigl(\mathbf I_d-\mathsf{R}_{n,\beta}(Q_t^{(n)})\bigr)\\
&\quad\times\left[\nabla_vF(X_t^{(n)},V_t^{(n)})
-\nabla_vF(X_t^{\prime(n)},V_t^{\prime(n)})\right]\d t.
\end{aligned}\]
Moreover, \(\mathbf I_d-\mathsf{R}_{n,\beta}(Q_t^{(n)})
=2\alpha_{n,\beta}\mathbf e_n(Q_t^{(n)})\mathbf e_n(Q_t^{(n)})^{\mathrm T}\).
Since \(\mathbf e_n(Q_t^{(n)})\) is parallel to
\(\mathbf e(Q_t^{(n)})\) whenever \(Q_t^{(n)}\neq \mathbf 0\), and both sides below vanish when \(Q_t^{(n)}=\mathbf 0\),
\[\mathbf e(Q_t^{(n)})^{\mathrm T}(\mathbf I_d-\mathsf{R}_{n,\beta}(Q_t^{(n)}))
=2\alpha_{n,\beta}|\mathbf e_n(Q_t^{(n)})|^2
\mathbf e(Q_t^{(n)})^{\mathrm T}.\]
Therefore,
\begin{equation}\label{cov-exact}
\begin{aligned}
\d\langle M^{Q,(n)},\widetilde M^{(n)}\rangle_t
&=4\varepsilon_R \alpha_{n,\beta}(u_{{\rm r},t}^{(n)})^2
|\mathbf e_n(Q_t^{(n)})|^2\mathbf e(Q_t^{(n)})^{\mathrm T}\\
&\quad\times\left[\nabla_vF(X_t^{(n)},V_t^{(n)})
-\nabla_vF(X_t^{\prime(n)},V_t^{\prime(n)})\right]\d t.
\end{aligned}
\end{equation}
If \(u_{{\rm r},t}^{(n)}=0\), the right-hand side is zero. If \(u_{{\rm r},t}^{(n)}>0\), then by the choice of \(u_{\rm r}\) in \eqref{rc}, both
\((X_t^{(n)},V_t^{(n)})\) and \((X_t^{\prime(n)},V_t^{\prime(n)})\) belong to \(S_R\).
By \eqref{eq4.7},
\[\varepsilon_R\left|\nabla_vF(X_t^{(n)},V_t^{(n)})
-\nabla_vF(X_t^{\prime(n)},V_t^{\prime(n)})\right|
\leq \mathcal E_t^{(n)}\eta_{1,R}.\]
Combining this and \eqref{cov-exact}, and using
\[\d\langle f(r^{(n)}),\mathcal E^{(n)}\rangle_t
=f'_-(r_t^{(n)})\d\langle M^{Q,(n)},\widetilde M^{(n)}\rangle_t,\]
we find
\begin{equation}\label{cov-final}
\d\langle f(r^{(n)}),\mathcal E^{(n)}\rangle_t
\leq 4\alpha_{n,\beta}(u_{{\rm r},t}^{(n)})^2
|\mathbf e_n(Q_t^{(n)})|^2\mathcal E_t^{(n)}\eta_{1,R}
f'_-(r_t^{(n)})\d t.
\end{equation}

\medskip
\noindent
\textbf{Step 4.} The product formula.
By the product formula for continuous semimartingales,
\[\d\rho_t^{(n)}=\mathcal E_t^{(n)}\,\d f(r_t^{(n)})+f(r_t^{(n)})\,\d \mathcal E_t^{(n)}+\d\langle f(r^{(n)}),\mathcal E^{(n)}\rangle_t.\]
Substituting \eqref{f-r}, \eqref{E-bound}, and
\eqref{cov-final} yields exactly the drift
\(H_t^{(n)}\) in \eqref{Hn}. The remaining stochastic terms are
\[M_t^{\rho,(n)}:=
\int_0^t\mathcal E_s^{(n)}f'_-(r_s^{(n)})\d M_s^{Q,(n)}
+\int_0^tf(r_s^{(n)})\d\widetilde M_s^{(n)}.\]
After localization, both integrands are bounded on compact time intervals, so
\(M^{\rho,(n)}\) is a continuous local martingale. This completes the proof.
\end{proof}\hfill$\square$

The following lemma bounds the finite-variation integrand from Lemma \ref{Le1n} by a negative multiple of the weighted distance, apart from two errors that vanish in the successive limits \(n\to\infty\) and \(\delta\downarrow0\). It is the main casewise estimate used to prove Theorem \ref{Th}.
\begin{lemma}\label{lemrde}
Suppose that Assumptions \ref{as4.0}--\ref{as4.4} and \eqref{growth-regime} hold. Let
\(R\geq R_\star\), where \(R_\star\) is defined by \eqref{Rstar-explicit}. Fix \(\delta\in(0,R)\) and \(\beta\in(0,2)\), and set \(c_{n,\beta}:=1\wedge(\alpha_{n,\beta}\eta_{5,R})\). Let \(\bigl((X_t^{(n)},V_t^{(n)}), (X_t^{\prime(n)},V_t^{\prime(n)})\bigr)_{t\geq0}\)
be the regularized coupling defined by \eqref{regu-coup}, with cutoff functions
\(u_{\rm r},u_{\rm s}\) satisfying \eqref{rc}. Define
\[\mathcal A_{R,\delta}^{(n)}:=
\left\{s: (X_s^{(n)},V_s^{(n)}),(X_s^{\prime(n)},V_s^{\prime(n)})
\in S_{R,\delta},\quad |Z_s^{(n)}|^2+|Y_s^{(n)}|^2\geq\delta^2\right\}\]
and
\[\mathcal B_{R,\delta}^{(n)}:=\left\{s\in\mathcal A_{R,\delta}^{(n)}:
|b(W_s^{(n)})|\geq\frac{\gamma}{2}|Z_s^{(n)}|\right\},\]
where \(b\) is defined in \eqref{qb}. Then there exist constants
\(\eta_{6,R}>0\) and \(\eta_{7,R,\delta}>0\), independent of \(n\), and a deterministic function
\(\zeta_R:(0,R)\to[0,\infty)\) such that
\(\lim_{\delta\downarrow0}\zeta_R(\delta)=0\) and
\begin{equation}\label{eq:rde}
H_t^{(n)}\leq-c_{n,\beta}\rho_t^{(n)}+\zeta_R(\delta)
+\eta_{7,R,\delta}n^{-\beta}
+\eta_{7,R,\delta}\mathbf 1_{\{|Q_t^{(n)}|<\frac{2}{n}\}}
\mathbf 1_{\mathcal B_{R,\delta}^{(n)}}(t),
\end{equation}
for \(\mathrm dt\otimes\mathbf Q_n\)-almost every \((t,\omega)\).
\end{lemma}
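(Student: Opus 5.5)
The plan is to estimate the three summands of \eqref{Hn} separately and then combine them according to the four-case classification described in the Introduction. First we record two facts used in every case. By concavity, $f(0)=0$ and $f'(0+)=1$ we have $rf'_-(r)\le f(r)$ and $f'_-\le1$. Next, substituting \eqref{fe} gives
\[f''(r)+\gamma\eta_{1,R}f'_-(r)\le-\frac{\Lambda_R\gamma^2}{4}rf'_-(r)-\frac{\eta_{5,R}\gamma}{4}\Phi(r)-\frac{\eta_{3,R}\gamma}{4}\Phi(r)\mathbf 1_{(0,\eta_2)}(r)\le0,\]
so the reflection (third) term of $H_t^{(n)}$ is always nonpositive. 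Also $\varepsilon_R\cdot 2C_5=\eta_{3,R}$ by \eqref{epsilon}, and hence $\mathcal E_t^{(n)}=\varepsilon_R\bigl(2C_5/\eta_{3,R}+F+F'\bigr)$. Finally, the first bracket in \eqref{Hn} is at most $2\gamma|Q|+\gamma^{-1}|\nabla U(X)-\nabla U(X')|$, because $-2\gamma|Z|-2\gamma\mathbf e(Q)^{\mathrm T}Z\le0$.

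\emph{Case 1: at least one point lies outside $S_R$.} Here $u_{\rm r}=0$, and the first term is bounded by $\mathcal E f(r)\bigl(2\gamma+\gamma^{-1}|\nabla U(X)-\nabla U(X')|/|Z|\bigr)$, using $|Q|\le r$, $|Z|\le r$ and $rf'\le f$. Factoring out $\varepsilon_R f(r)$, Proposition \ref{pr4.2} (which carries the spare ``$+1$'') gives
\[H\le-\varepsilon_Rf(r)\bigl(2C_5/\eta_{3,R}+F+F'\bigr)=-\rho\le-c_{n,\beta}\rho.\]

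\emph{Case 2: both points lie in $S_R$, but one lies outside $S_{R,\delta}$.} We discard the reflection term. The remaining two summands form a continuous function $\Theta$ of $(W,\text{the pair})$, and $\rho_R$ is continuous as well. Move the offending point radially to a point of $\partial S_R$ at distance at most $2\delta$ (the other point stays in $\overline{S_R}$); at that point Case 1 gives $\Theta\le-\rho$. The uniform moduli of continuity of $\Theta+\rho$ on the compact set $\overline{S_R}\times\overline{S_R}$ then define $\zeta_R(\delta)\to0$. One has to be careful that $\mathbf e(Q)$ and $|\nabla U(X)-\nabla U(X')|/|Z|$ are not continuous. We handle this by first replacing the bracket by its upper bound $\gamma\Lambda_R r$ (Remark \ref{rem211}, with the quotient bounded by $L_R$ on $S_R$), which is continuous.

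\emph{Case 3: both points lie in $S_{R,\delta}$ and $|Z|^2+|Y|^2<\delta^2$.} Then $r\le C_\gamma\delta$. The first term is at most $\mathcal E\gamma\Lambda_R r$ with $\mathcal E$ bounded on $S_R$. The second term is at most $\eta_{3,R}f\le\eta_{3,R}r$. Also $\rho\le C_R\delta$. So $H\le-\rho+C_R\delta$, and this is absorbed into $\zeta_R$.

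\emph{Case 4 (the active region): both points lie in $S_{R,\delta}$ and $|Z|^2+|Y|^2\ge\delta^2$.} Here $u_{\rm r}=1$. Combining the first term, bounded by $\mathcal E f'\gamma\Lambda_R r$, with the reflection term leaves
\[\mathcal E\gamma\Lambda_R rf'\bigl(1-\alpha_{n,\beta}|\mathbf e_n|^2\bigr)-\alpha_{n,\beta}|\mathbf e_n|^2\mathcal E\bigl(\eta_{5,R}\Phi+\eta_{3,R}\Phi\mathbf 1_{r<\eta_2}\bigr)+\varepsilon_Rf\bigl(2C_5-FG-F'G'\bigr).\]
Off the tube $\{|Q|<2/n\}$ we have $1-\alpha_{n,\beta}|\mathbf e_n|^2\le n^{-\beta}$. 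In that case:
\begin{itemize}
\item the first piece is at most $C_Rn^{-\beta}$;
\item on $\mathcal H$, \eqref{eta2} gives $r\le\eta_2$, and then $\Phi\ge f$ and $\mathcal E\ge1$ make $-\alpha\eta_{3,R}\Phi\mathcal E+\eta_{3,R}f\mathbf 1_{\mathcal H}\le\eta_{3,R}n^{-\beta}f$;
\item off $\mathcal H$ the Lyapunov term is nonpositive;
\item $-\alpha\eta_{5,R}\Phi\mathcal E\le-\alpha\eta_{5,R}\rho$.
\end{itemize}
Together these give $-c_{n,\beta}\rho+\eta_{7,R,\delta}n^{-\beta}$. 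Inside the tube but on $\mathcal B^{(n)}_{R,\delta}$ we simply bound everything by a constant $\eta_{7,R,\delta}$; this produces the indicator term in \eqref{eq:rde}.

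The main obstacle is the tube complement $\{|Q|<2/n\}\setminus\mathcal B^{(n)}_{R,\delta}$, where $\mathbf e_n$ may vanish and no reflection negativity is available. There I would use the identity $\mathbf e(Q)^{\mathrm T}(-2\gamma Z-\gamma^{-1}(\nabla U(X)-\nabla U(X')))=\mathbf e(Q)^{\mathrm T}(b-\gamma Q)$ together with $|b|<\gamma|Z|/2$. This shows the first bracket is at most $-\tfrac32\gamma|Z|+3\gamma|Q|$. Since $|Q|<2/n$ and $|Z|^2+|Y|^2\ge\delta^2$ force $|Z|\gtrsim\delta$ for $n$ large, and $r\le|Z|+|Q|$, we get a drift of order $-\gamma r f'(r)\mathcal E$ with $f'\ge\phi(\eta_{4,R})/2$. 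The condition $\eta_{3,R},\eta_{5,R}\le\gamma\phi(\cdot)/32$ in \eqref{eq4.8} and \eqref{Seeq2} is exactly what lets this negativity dominate both the Lyapunov remainder $\eta_{3,R}f\mathbf 1_{\mathcal H}$ and $-\eta_{5,R}\rho$. For small $n$ (depending on $\delta$), any shortfall is absorbed into $\eta_{7,R,\delta}n^{-\beta}$.
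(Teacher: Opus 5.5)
Your outline follows the paper's proof essentially step for step. It uses the same splitting of $H_t^{(n)}$, the same four regions, and a boundary-point comparison in the second region. In the active region it uses the identity $\mathcal D_t^{(n)}=-2\gamma|Z_t^{(n)}|+\gamma|Q_t^{(n)}|+\mathbf e(Q_t^{(n)})^{\mathrm T}b(W_t^{(n)})$, the lower bound $|Z_t^{(n)}|\gtrsim\delta$ for $n\geq n_0(\delta)$, and the truncations $\eta_{3,R},\eta_{5,R}\leq\gamma\phi_R(\cdot)/32$. Your Cases~1, 3 and~4 are sound. Your off-tube bound on $\mathcal H$, which uses $\mathcal E_t^{(n)}\geq1$ to obtain $\eta_{3,R}n^{-\beta}f$, is correct.

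One step does not follow as written. In Case~2 you majorize the drift bracket by $\gamma\Lambda_R r$ and then assert that at the boundary point $(\widetilde\xi,\xi')$ ``Case~1 gives $\Theta\le-\rho$''. Case~1, through Proposition~\ref{pr4.2}, absorbs only the factor $2\gamma+1+|\nabla U(x)-\nabla U(x')|/(\gamma|x-x'|)$, i.e.\ the bracket $2\gamma|q|+\gamma^{-1}|\nabla U(x)-\nabla U(x')|$. On $\overline S_R\times\overline S_R$ this bracket is \emph{smaller} than $\gamma\Lambda_R r$ (Remark~\ref{rem211}). So your $\Theta(\widetilde\xi,\xi')$ is larger than the quantity Case~1 controls. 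What you actually need is a strengthened Proposition~\ref{pr4.2} with $1+\gamma\Lambda_R$ in place of that factor. That strengthened version can be verified for $R\geq R_2$ by rerunning Lemma~\ref{le4.2} with $L_R$. At $\ell=2$ and a point with $|v|=R^2$, $|x|<R$, the quartic bound on $U$ is useless, and one must check that $ac_0R^4/6\geq 2L_1R^2/\gamma+\dots$ follows from $R^2\geq12(B_1-a/6)/(ac_0)$. However, this is neither stated in the paper nor checked in your proposal.

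The simpler repair is the paper's own, via $\mathscr J_R$. The bracket $2\gamma|q|+\gamma^{-1}|\nabla U(x)-\nabla U(x')|$ is already continuous, since it involves neither $\mathbf e(q)$ nor the difference quotient. Use it as your continuous majorant, and Case~1's algebra then applies verbatim at the boundary point.

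A smaller point: the reflection term is nonpositive only $\mathrm dt\otimes\mathbf Q_n$-a.e., not ``always''. With the convention $f''=0$ at $\eta_2$ and $\eta_{4,R}$, one has $f''+\gamma\eta_{1,R}f'_->0$ there. You need the occupation-time formula, as in the paper via \eqref{r-qv}, to see that the times with $r_t^{(n)}\in\{\eta_2,\eta_{4,R}\}$ and $u_{{\rm r},t}^{(n)}|\mathbf e_n(Q_t^{(n)})|\neq0$ are Lebesgue-null.
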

\begin{proof}
Set
\begin{equation}\label{SBK}
\begin{aligned}
\Delta_t^{(n)}
&:=\nabla U(X_t^{(n)})-\nabla U(X_t^{\prime(n)}),\\
\mathcal S_t^{(n)}
&:=F(X_t^{(n)},V_t^{(n)})G(X_t^{(n)},V_t^{(n)})
+F(X_t^{\prime(n)},V_t^{\prime(n)})G(X_t^{\prime(n)},V_t^{\prime(n)}),\\
\mathcal D_t^{(n)}
&:=-2\gamma|Z_t^{(n)}|
+2\gamma|Q_t^{(n)}|
-2\gamma\mathbf e(Q_t^{(n)})^{\mathrm T}Z_t^{(n)}
-\gamma^{-1}\mathbf e(Q_t^{(n)})^{\mathrm T}\Delta_t^{(n)},\\
\mathcal K_t^{(n)}
&:=4\gamma^{-1}\alpha_{n,\beta}\mathcal E_t^{(n)}
\bigl(u_{{\rm r},t}^{(n)}\bigr)^2|\mathbf e_n(Q_t^{(n)})|^2
\left(f''(r_t^{(n)})+\gamma\eta_{1,R}f'_-(r_t^{(n)})\right).
\end{aligned}
\end{equation}
By \eqref{Hn} and \eqref{SBK},
\begin{equation}\label{eq:H-decomposition}
H_t^{(n)}
=\mathcal E_t^{(n)}f'_-(r_t^{(n)})\mathcal D_t^{(n)}
+\varepsilon_R f(r_t^{(n)})(2C_5-\mathcal S_t^{(n)})
+\mathcal K_t^{(n)}.
\end{equation}
Equation \eqref{fe} gives
\(f''+\gamma\eta_{1,R}f'_-\leq0\) on
\((0,\eta_2)\cup(\eta_2,\eta_{4,R})\cup(\eta_{4,R},\infty)\).
At \(r_t^{(n)}\in\{\eta_2,\eta_{4,R}\}\), \eqref{r-qv} and the
occupation-time formula give
\[\alpha_{n,\beta}\bigl(u_{{\rm r},t}^{(n)}\bigr)^2
|\mathbf e_n(Q_t^{(n)})|^2
\mathbf 1_{\{r_t^{(n)}\in\{\eta_2,\eta_{4,R}\}\}}=0\]
for \(\mathrm dt\otimes\mathbf Q_n\)-almost every \((t,\omega)\).
Consequently,
\begin{equation}\label{eq:K-nonpositive}
\mathcal K_t^{(n)}\leq0
\qquad
\text{for }\mathrm dt\otimes\mathbf Q_n\text{-almost every }(t,\omega).
\end{equation}
Moreover, using
\(\mathbf e(Q_t^{(n)})^{\mathrm T}Z_t^{(n)}
\geq-|Z_t^{(n)}|\) in the definition of \(\mathcal D_t^{(n)}\) in
\eqref{SBK}, we obtain
\begin{equation}\label{eq:U}
\mathcal D_t^{(n)}
\leq2\gamma|Q_t^{(n)}|
+\gamma^{-1}|\Delta_t^{(n)}|.
\end{equation}
If both coordinates lie in \(S_R\), Assumption \ref{as4.1} gives
\(|\Delta_t^{(n)}|\leq L_R|Z_t^{(n)}|\). Combining this bound with
\eqref{eq:U}, \eqref{eq4.5}, and
\(r_t^{(n)}=|Z_t^{(n)}|+|Q_t^{(n)}|\) from \eqref{rtn}, we find
\begin{equation}\label{eq:U1}
\mathcal D_t^{(n)}
\leq\gamma\Lambda_R r_t^{(n)}.
\end{equation}

We divide the proof according to the following exhaustive and mutually
disjoint classification. First, either at least one coordinate lies outside
\(S_R\), or both coordinates lie in \(S_R\). In the latter case, either at
least one coordinate lies outside \(S_{R,\delta}\), or both coordinates lie
in \(S_{R,\delta}\). In the last region, we distinguish between
\(|Z_t^{(n)}|^2+|Y_t^{(n)}|^2<\delta^2\) and
\(|Z_t^{(n)}|^2+|Y_t^{(n)}|^2\geq\delta^2\).

\smallskip
\noindent
\emph{Case I: at least one coordinate lies outside \(S_R\).}
In this case, \eqref{rc} gives \(u_{{\rm r},t}^{(n)}=0\), and hence
\(\mathcal K_t^{(n)}=0\) by \eqref{SBK}. Suppose first that
\(r_t^{(n)}>0\). Proposition \ref{pr4.2}, \eqref{eq:U}, and
\(r_t^{(n)}\geq|Z_t^{(n)}|\) imply
\begin{equation}\label{eq:caseI1}
\begin{aligned}
\varepsilon_R f(r_t^{(n)})(2C_5-\mathcal S_t^{(n)})
\leq{}&
-\frac{f(r_t^{(n)})}{r_t^{(n)}}\mathcal E_t^{(n)}
\left(2\gamma|Q_t^{(n)}|
+\gamma^{-1}|\Delta_t^{(n)}|\right)
-\rho_t^{(n)}.
\end{aligned}
\end{equation}
Here the quotient involving
\(|\Delta_t^{(n)}|/|Z_t^{(n)}|\) is defined as zero when
\(Z_t^{(n)}=\mathbf0\). We also used \eqref{epsilon} and
\eqref{ETn} in the identity
\[\varepsilon_R\left(
\frac{2C_5}{\eta_{3,R}}
+F(X_t^{(n)},V_t^{(n)})
+F(X_t^{\prime(n)},V_t^{\prime(n)})
\right)=\mathcal E_t^{(n)}.\]
Combining \eqref{eq:H-decomposition}, \eqref{eq:U}, and
\eqref{eq:caseI1}, and then using the concavity inequality
\(f(r)\geq rf'_-(r)\), gives
\[H_t^{(n)}
\leq
\mathcal E_t^{(n)}
\left(f'_-(r_t^{(n)})-\frac{f(r_t^{(n)})}{r_t^{(n)}}\right)
\left(2\gamma|Q_t^{(n)}|+\gamma^{-1}|\Delta_t^{(n)}|\right)
-\rho_t^{(n)}
\leq-\rho_t^{(n)}.\]
If \(r_t^{(n)}=0\), then \eqref{rtn} gives
\(Z_t^{(n)}=Q_t^{(n)}=\mathbf0\), while the definition of
\(Q_t^{(n)}\) gives \(Y_t^{(n)}=\mathbf0\). Thus
\(\Delta_t^{(n)}=\mathbf0\), and the same conclusion follows directly
from \eqref{eq:H-decomposition}. Hence
\begin{equation}\label{caseI}
H_t^{(n)}\leq-\rho_t^{(n)}.
\end{equation}

\smallskip
\noindent
\emph{Case II: both coordinates lie in \(S_R\), but at least one lies
outside \(S_{R,\delta}\).}
Assume, without loss of generality, that
\(\xi_t^{(n)}:=(X_t^{(n)},V_t^{(n)})\notin S_{R,\delta}\). If \(|X_t^{(n)}|\geq R-\delta\), set
\[\widetilde\xi_t^{(n)}:=\left(\frac{R X_t^{(n)}}{|X_t^{(n)}|},V_t^{(n)}\right).\]
Otherwise \(|V_t^{(n)}|\geq R_{\rm v}(R)-\delta\), and set
\[\widetilde\xi_t^{(n)}:=\left(X_t^{(n)},\frac{R_{\rm v}(R)V_t^{(n)}}{|V_t^{(n)}|}\right).\]
In either case, \(\widetilde\xi_t^{(n)}\notin S_R\) and \(|\widetilde\xi_t^{(n)}-\xi_t^{(n)}|\leq\delta\). For
\(\xi=(x,v),\xi'=(x',v')\in\overline S_R\), define
\[\begin{aligned}
\mathscr J_R(\xi,\xi')
:={}&
(1+\varepsilon_R F(x,v)+\varepsilon_R F(x',v'))
f'_-(r(\xi,\xi'))\\
&\times\left(
2\gamma
\left|2(x-x')+\gamma^{-1}(v-v')\right|
+\gamma^{-1}|\nabla U(x)-\nabla U(x')|
\right)\\
&+\varepsilon_R f(r(\xi,\xi'))
\left(2C_5-F(x,v)G(x,v)-F(x',v')G(x',v')\right).
\end{aligned}\]
Set \(\xi_t^{\prime(n)}:=(X_t^{\prime(n)},V_t^{\prime(n)})\). Equations \eqref{eq:H-decomposition}, \eqref{eq:K-nonpositive}, and \eqref{eq:U} imply
\[H_t^{(n)}\leq \mathscr J_R(\xi_t^{(n)},\xi_t^{\prime(n)}).\]
Since \((\widetilde\xi_t^{(n)},\xi_t^{\prime(n)})\) has its first coordinate outside \(S_R\), the algebraic estimate obtained from Proposition \ref{pr4.2}, \eqref{eq:U}, and the concavity inequality \(f(r)\geq rf'_-(r)\), exactly as in Case I, gives
\[\mathscr J_R(\widetilde\xi_t^{(n)},\xi_t^{\prime(n)})
\leq-\rho_R(\widetilde\xi_t^{(n)},\xi_t^{\prime(n)}).\]
By \eqref{RD}, \(r\leq\eta_{4,R}\) on
\(\overline S_R\times\overline S_R\). Equations \eqref{fe} and
\eqref{f} show that \(f'_-\) is Lipschitz on
\([0,\eta_{4,R}]\), while Assumption \ref{as4.1} makes \(\nabla U\)
Lipschitz on \(\{|x|\leq R\}\). Hence \(\mathscr J_R\) and \(\rho_R\) are
Lipschitz on \(\overline S_R\times\overline S_R\). Choose \(\eta_{6,R}>0\), depending only on \(R\) and the fixed model parameters but not on \(n\) or \(\delta\), to dominate the sum of these two Lipschitz constants and the compact-set bounds used in Cases III and IV below. Then
\[\begin{aligned}
&|\mathscr J_R(\xi_t^{(n)},\xi_t^{\prime(n)})
-\mathscr J_R(\widetilde\xi_t^{(n)},\xi_t^{\prime(n)})|
+|\rho_R(\xi_t^{(n)},\xi_t^{\prime(n)})
-\rho_R(\widetilde\xi_t^{(n)},\xi_t^{\prime(n)})|\\
&\hspace{55mm}\leq \eta_{6,R}\delta.
\end{aligned}\]
Combining the last two estimates gives
\begin{equation}\label{caseII}
H_t^{(n)}\leq-\rho_t^{(n)}+\eta_{6,R}\delta.
\end{equation}
The proof is identical when
\(\xi_t^{\prime(n)}\notin S_{R,\delta}\).

\smallskip
\noindent
\emph{Case III: both coordinates lie in \(S_{R,\delta}\) and
\(|Z_t^{(n)}|^2+|Y_t^{(n)}|^2<\delta^2\).}
The definition of \(Q_t^{(n)}\) and \eqref{rtn} give
\[|Q_t^{(n)}|
\leq(2+\gamma^{-1})\delta,
\qquad
r_t^{(n)}
\leq(3+\gamma^{-1})\delta.\]
Assumption \ref{as4.1} gives
\(|\Delta_t^{(n)}|\leq \eta_{6,R}|Z_t^{(n)}|\), and all other coefficients
in \eqref{SBK} are uniformly bounded on
\(\overline S_R\times\overline S_R\), independently of \(n\). Using
\(f(r)\leq r\), which follows from \eqref{fP}, together with
\eqref{eq:H-decomposition} and \eqref{eq:K-nonpositive}, we obtain
\(\rho_t^{(n)}\leq \eta_{6,R}\delta\) and \(H_t^{(n)}\leq \eta_{6,R}\delta\).
Therefore,
\begin{equation}\label{caseIII}
H_t^{(n)}\leq-\rho_t^{(n)}+2\eta_{6,R}\delta.
\end{equation}

\smallskip
\noindent
\emph{Case IV: both coordinates lie in \(S_{R,\delta}\) and
\(|Z_t^{(n)}|^2+|Y_t^{(n)}|^2\geq\delta^2\).}
By the definition of \(\mathcal A_{R,\delta}^{(n)}\), this case is
equivalent to \(t\in\mathcal A_{R,\delta}^{(n)}\). Equations
\eqref{rc} and \eqref{RD} then give
\(u_{{\rm r},t}^{(n)}=1\) and \(r_t^{(n)}\leq\eta_{4,R}\).
Multiplying \eqref{eq:U1} by the nonnegative quantity
\(\mathcal E_t^{(n)}f'_-(r_t^{(n)})\) yields
\begin{equation}\label{caseIVU}
\mathcal E_t^{(n)}f'_-(r_t^{(n)})\mathcal D_t^{(n)}
\leq\gamma\Lambda_R r_t^{(n)}
\mathcal E_t^{(n)}f'_-(r_t^{(n)}).
\end{equation}
We next estimate \(\varepsilon_R f(r_t^{(n)})(2C_5-\mathcal S_t^{(n)})\), which is the middle summand on the right-hand side of \eqref{eq:H-decomposition}. This quantity is nonpositive when
\(\mathcal S_t^{(n)}\geq2C_5\). When
\(\mathcal S_t^{(n)}<2C_5\), the pair belongs to \(\mathcal H\), so
\eqref{eta2} gives \(r_t^{(n)}\leq\eta_2\). In addition,
\eqref{epsilon} and \eqref{rhotn} give
\[\varepsilon_R f(r_t^{(n)})(2C_5-\mathcal S_t^{(n)})
\leq2C_5\varepsilon_R f(r_t^{(n)})
=\eta_{3,R}f(r_t^{(n)})
\leq\eta_{3,R}\rho_t^{(n)}.\]
Thus,
\begin{equation}\label{caseIV1}
\varepsilon_R f(r_t^{(n)})(2C_5-\mathcal S_t^{(n)})
\leq\eta_{3,R}\rho_t^{(n)}
\mathbf 1_{(0,\eta_2]}(r_t^{(n)}).
\end{equation}
For
\(r_t^{(n)}\notin\{\eta_2,\eta_{4,R}\}\), substituting \eqref{fe}
into the definition of \(\mathcal K_t^{(n)}\) in \eqref{SBK}, using
\(u_{{\rm r},t}^{(n)}=1\), and then applying
\(\Phi(r)\geq f(r)\) from \eqref{fP}, yields
\begin{equation}\label{caseIVK}
\mathcal K_t^{(n)}
\leq-\alpha_{n,\beta}|\mathbf e_n(Q_t^{(n)})|^2
\Bigl[
\gamma\Lambda_R r_t^{(n)}\mathcal E_t^{(n)}f'_-(r_t^{(n)})
+\eta_{5,R}\rho_t^{(n)}+\eta_{3,R}\rho_t^{(n)}
\mathbf 1_{(0,\eta_2)}(r_t^{(n)})
\Bigr].
\end{equation}
At \(r_t^{(n)}=\eta_2\) or \(\eta_{4,R}\), the occupation-time
identity used in proving \eqref{eq:K-nonpositive} shows that
\(\alpha_{n,\beta}|\mathbf e_n(Q_t^{(n)})|^2=0\) for almost every such
time. Therefore, combining \eqref{eq:H-decomposition},
\eqref{caseIVU}, \eqref{caseIV1}, and \eqref{caseIVK} gives
\[\begin{aligned}
H_t^{(n)}\leq{}&
-\alpha_{n,\beta}|\mathbf e_n(Q_t^{(n)})|^2\eta_{5,R}\rho_t^{(n)}\\
&+\left(1-\alpha_{n,\beta}|\mathbf e_n(Q_t^{(n)})|^2\right)
\left[
\gamma\Lambda_R r_t^{(n)}\mathcal E_t^{(n)}f'_-(r_t^{(n)})
+\eta_{3,R}\rho_t^{(n)}
\mathbf 1_{(0,\eta_2]}(r_t^{(n)})
\right].
\end{aligned}\]
Since
\[-\alpha_{n,\beta}|\mathbf e_n(Q_t^{(n)})|^2\eta_{5,R}\rho_t^{(n)}
\leq-\alpha_{n,\beta}\eta_{5,R}\rho_t^{(n)}+\left(1-\alpha_{n,\beta}|\mathbf e_n(Q_t^{(n)})|^2\right)\eta_{5,R}\rho_t^{(n)},\]
and all quantities in the square brackets are uniformly bounded on
\(\overline S_R\times\overline S_R\), the choice of \(\eta_{6,R}\) in Case II gives
\begin{equation}\label{caseIVH}
H_t^{(n)}
\leq-\alpha_{n,\beta}\eta_{5,R}\rho_t^{(n)}
+\eta_{6,R}\left(1-\alpha_{n,\beta}|\mathbf e_n(Q_t^{(n)})|^2\right).
\end{equation}

By the definitions of \(\chi_n\), \(\mathbf e_n\), and
\(\alpha_{n,\beta}=1-n^{-\beta}\), we have
\[1-\alpha_{n,\beta}|\mathbf e_n(q)|^2
\leq n^{-\beta}+\mathbf 1_{\{|q|<\frac{2}{n}\}},
\qquad q\in\mathbb R^d.\]
Indeed, if \(|q|\geq2/n\), then \(\chi_n(|q|)=1\) and the left-hand
side equals \(n^{-\beta}\); if \(|q|<2/n\), it is at most \(1\).
Consequently, \eqref{caseIVH} gives the required estimate on
\(\{|Q_t^{(n)}|\geq2/n\}\). To handle
\(\{|Q_t^{(n)}|<2/n\}\), we rewrite \(\mathcal D_t^{(n)}\) using \eqref{qb}.
Since
\[\mathbf e(Q_t^{(n)})^{\mathrm T}b(W_t^{(n)})
=\gamma|Q_t^{(n)}|
-2\gamma\mathbf e(Q_t^{(n)})^{\mathrm T}Z_t^{(n)}
-\gamma^{-1}\mathbf e(Q_t^{(n)})^{\mathrm T}\Delta_t^{(n)},\]
comparison with the definition of \(\mathcal D_t^{(n)}\) in \eqref{SBK} gives
\begin{equation}\label{caseIV}
\mathcal D_t^{(n)}
=-2\gamma|Z_t^{(n)}|
+\gamma|Q_t^{(n)}|
+\mathbf e(Q_t^{(n)})^{\mathrm T}b(W_t^{(n)}).
\end{equation}
The identity also holds when \(Q_t^{(n)}=\mathbf0\), by the convention
\(\mathbf e(\mathbf0)=\mathbf0\).

Choose
\[n_0:=\left\lceil\max\left\{\frac{4\gamma}{\delta},\frac{16(1+2\gamma)}{\delta}\right\}\right\rceil.\]
Suppose that \(n\geq n_0\), \(|Q_t^{(n)}|<2/n\), and
\(t\in\mathcal A_{R,\delta}^{(n)}\). The definition of
\(\mathcal A_{R,\delta}^{(n)}\), followed by the identity
\(Y_t^{(n)}=\gamma Q_t^{(n)}-2\gamma Z_t^{(n)}\), gives
\[\delta
\leq|Z_t^{(n)}|+|Y_t^{(n)}|
\leq(1+2\gamma)|Z_t^{(n)}|+\gamma|Q_t^{(n)}|.\]
Since \(n\geq4\gamma/\delta\) and \(|Q_t^{(n)}|<2/n\), we have
\(\gamma|Q_t^{(n)}|\leq\delta/2\), and hence
\(|Z_t^{(n)}|\geq\delta/(2(1+2\gamma))\). Using this estimate together
with \(n\geq16(1+2\gamma)/\delta\), we obtain
\begin{equation}\label{caseIV-smallQ}
|Q_t^{(n)}|
<\frac{2}{n}
\leq\frac{\delta}{8(1+2\gamma)}
\leq\frac{1}{4}|Z_t^{(n)}|.
\end{equation}

If, in addition, \(t\notin\mathcal B_{R,\delta}^{(n)}\), then the
definition of \(\mathcal B_{R,\delta}^{(n)}\) gives
\[|b(W_t^{(n)})|<\frac{\gamma}{2}|Z_t^{(n)}|.\]
Applying this inequality and \eqref{caseIV-smallQ} to
\eqref{caseIV}, and using
\(|\mathbf e(Q_t^{(n)})^{\mathrm T}b(W_t^{(n)})|
\leq|b(W_t^{(n)})|\), gives
\[\mathcal D_t^{(n)}\leq-\left(1-\frac{1}{8}-\frac{1}{4}\right)2\gamma|Z_t^{(n)}|=-\frac{5}{4}\gamma|Z_t^{(n)}|.\]
Moreover, \eqref{rtn} and \eqref{caseIV-smallQ} give
\[r_t^{(n)}\leq\left(1+\frac{1}{4}\right)|Z_t^{(n)}|=\frac{5}{4}|Z_t^{(n)}|.\]
Thus,
\begin{equation}\label{caseIV-negative}
\mathcal D_t^{(n)}\leq-\frac{5}{4}\gamma|Z_t^{(n)}|,
\qquad r_t^{(n)}\leq\frac{5}{4}|Z_t^{(n)}|.
\end{equation}
The second inequality in \eqref{caseIV-negative} implies
\(|Z_t^{(n)}|\geq4r_t^{(n)}/5\). Substitution into the first
inequality gives
\(\mathcal D_t^{(n)}\leq-\gamma r_t^{(n)}\). Since
\(f(r)\leq r\) by \eqref{fP} and
\(\rho_t^{(n)}=\mathcal E_t^{(n)}f(r_t^{(n)})\) by \eqref{rhotn}, we conclude
\begin{equation}\label{caseIV-deterministic}
\mathcal E_t^{(n)}f'_-(r_t^{(n)})\mathcal D_t^{(n)}
\leq-\gamma f'_-(r_t^{(n)})\rho_t^{(n)}.
\end{equation}

We now insert \eqref{caseIV-deterministic} into
\eqref{eq:H-decomposition}. If
\(\mathcal S_t^{(n)}\geq2C_5\), then \(\varepsilon_R f(r_t^{(n)})(2C_5-\mathcal S_t^{(n)})\leq0\). Moreover,
\eqref{Seeq3} and \eqref{f} give
\(f'_-(r_t^{(n)})\geq\phi(\eta_{4,R})/2\), because
\(r_t^{(n)}\leq\eta_{4,R}\). Hence \eqref{eq:K-nonpositive},
\eqref{caseIV-deterministic}, and
\[\eta_{5,R}
\leq\frac{\gamma}{32}\phi(\eta_{4,R})\]
from \eqref{Seeq2} imply
\(H_t^{(n)}\leq-\eta_{5,R}\rho_t^{(n)}\). If
\(\mathcal S_t^{(n)}<2C_5\), then \eqref{eta2} gives
\(r_t^{(n)}\leq\eta_2\), while \eqref{caseIV1} gives \(\varepsilon_R f(r_t^{(n)})(2C_5-\mathcal S_t^{(n)})\leq\eta_{3,R}\rho_t^{(n)}\). Equations \eqref{Seeq3} and \eqref{f}
give \(f'_-(r_t^{(n)})\geq\phi(\eta_2)/2\), and the truncations in
\eqref{eq4.8} and \eqref{Seeq2}, together with
\(\eta_2<\eta_{4,R}\) and the monotonicity of \(\phi\), give
\[\eta_{3,R}+\eta_{5,R}
\leq\frac{\gamma}{16}\phi(\eta_2)
\leq\frac{\gamma}{2}\phi(\eta_2).\]
Combining this estimate with \eqref{eq:H-decomposition},
\eqref{eq:K-nonpositive}, \eqref{caseIV1}, and
\eqref{caseIV-deterministic} again yields
\(H_t^{(n)}\leq-\eta_{5,R}\rho_t^{(n)}\).

We have therefore proved that, when \(n\geq n_0\), the estimate \(H_t^{(n)}\leq-\eta_{5,R}\rho_t^{(n)}\)
holds on
\[\{|Q_t^{(n)}|<\frac{2}{n}\}
\cap
\bigl(\mathcal A_{R,\delta}^{(n)}
\setminus\mathcal B_{R,\delta}^{(n)}\bigr).\]
On \(\{|Q_t^{(n)}|\geq2/n\}\), the estimate follows from
\eqref{caseIVH} with an error bounded by \(\eta_{6,R}n^{-\beta}\). On
\(\{|Q_t^{(n)}|<2/n\}\cap\mathcal B_{R,\delta}^{(n)}\), we retain the
bounded remainder in \eqref{caseIVH}. Combining these three regions gives
\[H_t^{(n)}
\leq{}
-\alpha_{n,\beta}\eta_{5,R}\rho_t^{(n)}
+\eta_{7,R,\delta}n^{-\beta}+
\eta_{7,R,\delta}
\mathbf 1_{\{|Q_t^{(n)}|<\frac{2}{n}\}}
\mathbf 1_{\mathcal B_{R,\delta}^{(n)}}(t)\]
for every \(n\geq n_0\). For the finitely many indices
\(2\leq n<n_0\), compactness of
\(\overline S_R\times\overline S_R\), together with
\eqref{eq:H-decomposition}, gives a finite upper bound, depending only on \(R\) and \(\delta\), for \(H_t^{(n)}+c_{n,\beta}\rho_t^{(n)}\) in Case IV. Since
\(n^{-\beta}\geq n_0^{-\beta}\), these finitely many bounds are absorbed by enlarging \(\eta_{7,R,\delta}\). The Case IV estimate therefore holds for every \(n\geq2\).

Finally, the definition of \(c_{n,\beta}\) gives
\(c_{n,\beta}\leq1\) and
\(c_{n,\beta}\leq \alpha_{n,\beta}\eta_{5,R}\). Combining
\eqref{caseI}, \eqref{caseII}, \eqref{caseIII}, and the Case IV
estimate proves \eqref{eq:rde} with \(\zeta_R(\delta):=2\eta_{6,R}\delta\). Hence \(\lim_{\delta\downarrow0}\zeta_R(\delta)=0\), and \(\zeta_R\) is independent of \(n\).
\end{proof}\hfill$\square$

\section{Proof of the Main Theorem}\label{Sec5}
This section passes from the regularized differential inequality to the limiting weak coupling. The proof first removes the singular-set contribution, then lets the regularization and cutoff parameters vanish, and finally lifts the pointwise estimate to arbitrary initial laws.

{\bf Proof of Theorem \ref{Th}.}
Fix \(\delta\in(0,R)\) and \(\beta\in(0,2)\). These parameters are kept
fixed until the successive limiting arguments in \(n\) and \(\delta\) below.

\smallskip
\noindent\textbf{Step 1.} We prove, for every \(T>0\),
\begin{equation}\label{nearzero}
\lim_{n\to\infty}\mathbb E\int_0^T
\1_{\{|Q_s^{(n)}|<\frac{2}{n}\}}\1_{\mathcal B_{R,\delta}^{(n)}}(s)\d s=0.
\end{equation}
Define the closed set \(\mathcal I_{R,\delta}\) by
\[\mathcal I_{R,\delta}:=\left\{\boldsymbol\xi=((x,v),(x',v'))\in E\times E:
(x,v),(x',v')\in\overline S_{R,\delta},|x-x'|^2+|v-v'|^2\geq\delta^2\right\}.\]
Define the closed subset on which the inequality in the definition of
\(\mathcal B_{R,\delta}^{(n)}\) holds by
\[\mathcal N_{R,\delta}:=
\left\{\boldsymbol\xi\in\mathcal I_{R,\delta}:
|b(\boldsymbol\xi)|\geq\frac{\gamma}{2}|x-x'|\right\}.\]
Both sets are compact. Suppose that \(q(\boldsymbol\xi)=\mathbf0\) at a point of
\(\mathcal N_{R,\delta}\). Then
\(v-v'=-2\gamma(x-x')\), and therefore
\begin{equation}\label{z-lower}
|x-x'|\ge\frac{\delta}{\sqrt{1+4\gamma^2}}.
\end{equation}
The definition of \(\mathcal N_{R,\delta}\) and
\eqref{z-lower} imply
\begin{equation}\label{b-lower}
|b(\boldsymbol\xi)|\geq
\frac{\gamma\delta}{2\sqrt{1+4\gamma^2}}
=:b_\delta>0.
\end{equation}
We write \(Q_t=(Q_t^1,\ldots,Q_t^d)^{\mathrm T}\), \(M_t=(M_t^1,\ldots,M_t^d)^{\mathrm T}\), and \(b(\boldsymbol\xi)=(b_1(\boldsymbol\xi),\ldots,b_d(\boldsymbol\xi))^{\mathrm T}\).
Thus, by \eqref{Q-decomp},
\begin{equation}\label{Q-component}
Q_t^j=Q_0^j+\int_0^t b_j(W_s)\,\mathrm ds+M_t^j,
\qquad j=1,\ldots,d.
\end{equation}
Set \(\mathcal Z_{R,\delta}:=\{\boldsymbol\xi\in\mathcal N_{R,\delta}:
q(\boldsymbol\xi)=\mathbf0\}\). It is compact.
If \(\mathcal Z_{R,\delta}=\varnothing\), then the set \(\{s:Q_s=\mathbf0,\ W_s\in\mathcal N_{R,\delta}\}\) is empty and the required nonoccupation conclusion is immediate. We therefore assume that
\(\mathcal Z_{R,\delta}\neq\varnothing\).
By \eqref{b-lower}, for each \(\boldsymbol\xi\in \mathcal Z_{R,\delta}\), there exists an index
\(j(\boldsymbol\xi)\in\{1,\ldots,d\}\) such that
\[|b_{j(\boldsymbol\xi)}(\boldsymbol\xi)|\ge \frac{|b(\boldsymbol\xi)|}{\sqrt d}
\ge \frac{b_\delta}{\sqrt d}.\]
Let \(\sigma(\boldsymbol\xi):=\operatorname{sgn}\bigl(b_{j(\boldsymbol\xi)}(\boldsymbol\xi)\bigr)\in\{-1,1\}\).
Since \(b_{j(\boldsymbol\xi)}\) is continuous, there exists an open neighborhood
\(O(\boldsymbol\xi)\) of \(\boldsymbol\xi\) such that
\begin{equation}\label{eq:local-drift-lower}
\sigma(\boldsymbol\xi)b_{j(\boldsymbol\xi)}(\widetilde{\boldsymbol\xi})\ge \frac{b_\delta}{2\sqrt d},\qquad \forall~\widetilde{\boldsymbol\xi}\in O(\boldsymbol\xi).
\end{equation}
Since \(\mathcal Z_{R,\delta}\) is compact, there exist an integer \(m_\delta\geq1\) and points
\(\boldsymbol\xi_1,\ldots,\boldsymbol\xi_{m_\delta}\in \mathcal Z_{R,\delta}\) such that \(\mathcal Z_{R,\delta}\subseteq \bigcup_{k=1}^{m_\delta} O(\boldsymbol\xi_k)\).
Write \(j_k:=j(\boldsymbol\xi_k)\) and \(\sigma_k:=\sigma(\boldsymbol\xi_k)\), and fix
\(k\in\{1,\ldots,m_\delta\}\). Since the finite-variation part of \(Q^{j_k}\) has zero quadratic variation, \eqref{Q-component} implies \(\langle Q^{j_k}\rangle
=\langle M^{j_k}\rangle\). The occupation-time formula for the scalar continuous
semimartingale \(Q^{j_k}\) therefore gives
\begin{equation}\label{qv-zero}
\int_0^T\mathbf 1_{\{Q_s^{j_k}=0\}}\,\mathrm d\langle M^{j_k}\rangle_s=\int_{\mathbb R}\mathbf 1_{\{a=0\}}L_T^a(Q^{j_k})\,\mathrm da=0,
\end{equation}
because the singleton \(\{0\}\) has zero Lebesgue measure. Here \(L_T^a(Q^{j_k})\) denotes the local time of \(Q^{j_k}\) at \(a\). Moreover, the zero-set identity for continuous semimartingales yields
\begin{equation}\label{zero-set}
\int_0^T\mathbf 1_{\{Q_s^{j_k}=0\}}\,\mathrm dQ_s^{j_k}=0;
\end{equation}
Indeed, subtracting the two Tanaka formulas for the positive and negative parts of \(Q^{j_k}\) cancels their local-time terms and gives
\[Q_t^{j_k}-Q_0^{j_k}=\int_0^t\mathbf 1_{\{Q_s^{j_k}>0\}}\,\mathrm dQ_s^{j_k}+\int_0^t\mathbf 1_{\{Q_s^{j_k}<0\}}\,\mathrm dQ_s^{j_k}.\]
Comparison with \(Q_t^{j_k}-Q_0^{j_k}=\int_0^t\mathrm dQ_s^{j_k}\) proves \eqref{zero-set}; see \cite[Chapter~VI]{DR}. The process \((Q,W)\) is continuous and adapted to the complete right-continuous filtration \((\mathcal F_t)\) introduced in Lemma \ref{semimartingale}, hence predictable. Thus the bounded process
\(\mathbf 1_{\{Q_s=\mathbf0,\,W_s\in O(\boldsymbol\xi_k)\}}\) is an admissible
integrand and is supported on the zero set of \(Q^{j_k}\). Since it equals its product with \(\mathbf 1_{\{Q_s^{j_k}=0\}}\), the associativity rule for stochastic integration and \eqref{zero-set} imply that its integral against \(Q^{j_k}\) is zero. By \eqref{qv-zero}, the continuous local martingale \(\int_0^t \mathbf 1_{\{Q_s=\mathbf 0,\;W_s\in O(\boldsymbol\xi_k)\}}\,\mathrm dM_s^{j_k}\) has quadratic variation
\[\int_0^t\mathbf 1_{\{Q_s=\mathbf 0,\;W_s\in O(\boldsymbol\xi_k)\}}\,\mathrm d\langle M^{j_k}\rangle_s
=0,\]
which implies that
\[\int_0^t \mathbf 1_{\{Q_s=\mathbf 0,\;W_s\in O(\boldsymbol\xi_k)\}}\,\mathrm dM_s^{j_k}=0,\qquad 0\le t\le T,\quad \mathbf Q\text{-a.s.}\]
Using this consequence of \eqref{zero-set} and \eqref{Q-component}, we obtain
\begin{align*}
0&=\int_0^T \mathbf 1_{\{Q_s=\mathbf 0,\;W_s\in O(\boldsymbol\xi_k)\}}\,\mathrm dQ_s^{j_k} \\
&=\int_0^T\mathbf 1_{\{Q_s=\mathbf 0,\;W_s\in O(\boldsymbol\xi_k)\}}b_{j_k}(W_s)\,\mathrm ds
+\int_0^T \mathbf 1_{\{Q_s=\mathbf 0,\;W_s\in O(\boldsymbol\xi_k)\}}\,\mathrm dM_s^{j_k} \\
&=\int_0^T\mathbf 1_{\{Q_s=\mathbf 0,\;W_s\in O(\boldsymbol\xi_k)\}}b_{j_k}(W_s)\,\mathrm ds.
\end{align*}
By \eqref{eq:local-drift-lower}, the integrand satisfies
\[\sigma_k b_{j_k}(W_s)
\ge \frac{b_\delta}{2\sqrt d}
\qquad\text{on }
\{Q_s=0,\;W_s\in O(\boldsymbol\xi_k)\}.\]
Consequently,
\[0=\sigma_k\int_0^T\mathbf 1_{\{Q_s=\mathbf 0,\;W_s\in O(\boldsymbol\xi_k)\}}b_{j_k}(W_s)\,\mathrm ds\ge\frac{b_\delta}{2\sqrt d}
\int_0^T\mathbf 1_{\{Q_s=\mathbf 0,\;W_s\in O(\boldsymbol\xi_k)\}}
\,\mathrm ds,\]
and hence
\begin{equation}\label{nonoccupation}
\int_0^T\mathbf 1_{\{Q_s=\mathbf 0,\;W_s\in O(\boldsymbol\xi_k)\}}
\,\mathrm ds=0,\qquad \mathbf Q\text{-a.s.}
\end{equation}
Finally, whenever \(Q_s=\mathbf 0\) and
\(W_s\in\mathcal N_{R,\delta}\), one has \(W_s\in \mathcal Z_{R,\delta}
\subseteq \bigcup_{k=1}^{m_\delta} O(\boldsymbol\xi_k)\).
Therefore,
\[\mathbf 1_{\{Q_s=\mathbf 0\}}\mathbf 1_{\{W_s\in\mathcal N_{R,\delta}\}}\le\sum_{k=1}^{m_\delta}\mathbf 1_{\{Q_s=\mathbf 0,\;W_s\in O(\boldsymbol\xi_k)\}}.\]
Integrating and using \eqref{nonoccupation} for every member of the finite
cover gives
\begin{equation}\label{eq:nonoccupation}
\int_0^T\mathbf 1_{\{Q_s=\mathbf 0\}}
\mathbf 1_{\{W_s\in\mathcal N_{R,\delta}\}}\,\mathrm ds=0,
\qquad \mathbf Q\text{-a.s.}
\end{equation}

To prove \eqref{nearzero}, it is enough
to consider an arbitrary subsequence, still denoted by \(\mathbf Q_n\), that converges weakly to \(\mathbf Q\). Indeed, if \eqref{nearzero} failed, one could choose a subsequence along which the left-hand side stayed above a positive constant; tightness would then produce a weakly convergent subsubsequence, contradicting the conclusion proved below.

Since \(\Xi_2\) is Polish, the Skorokhod representation
theorem yields a probability space
\((\Omega^*,\mathcal F^*,\mathbb P^*)\) and
\(\Xi_2\)-valued random variables
\(\widetilde W^{(n)}\) and \(\widetilde W\) with laws \(\mathbf Q_n\) and \(\mathbf Q\), respectively.
Moreover, there exists \(\Omega_0\in\mathcal F^*\) with \(\mathbb P^*(\Omega_0)=1\) such that, for every \(\omega\in\Omega_0\) and every \(T>0\),
\begin{equation}\label{Wnlocally}
\lim_{n\to\infty}\sup_{0\leq t\leq T}|\widetilde W_t^{(n)}(\omega)-\widetilde W_t(\omega)|=0.
\end{equation}
Define
\[\Gamma_{\infty}:\Xi_2
\longrightarrow C([0,\infty);\mathbb R^d),
\qquad\Gamma_\infty(\omega)(t):=q(\omega(t)),\]
where \(q(\cdot)\) is defined in \eqref{qb}.
Set \(\widetilde Q^{(n)}:=\Gamma_\infty(\widetilde W^{(n)})\) and \(\widetilde Q:=\Gamma_\infty(\widetilde W)\).
Since \(\Gamma_\infty\) is continuous with respect to the topology of uniform convergence on compact time intervals,
\begin{equation}\label{Qnlocally}
\lim_{n\to\infty}\sup_{0\leq t\leq T}|\widetilde Q_t^{(n)}-\widetilde Q_t|=0,\qquad \mathbb P^*\text{-almost surely for every }T>0.
\end{equation}
The nonoccupation property \eqref{eq:nonoccupation} and
\(\operatorname{Law}_{\mathbb P^*}(\widetilde W)=\mathbf Q\) imply that
\begin{equation}\label{skorokhod}
\int_0^T\mathbf 1_{\{\widetilde Q_s=\mathbf 0\}}
\mathbf 1_{\{\widetilde W_s\in\mathcal N_{R,\delta}\}}\,\mathrm ds=0,
\qquad \mathbb P^*\text{-a.s.}
\end{equation}
After intersecting \(\Omega_0\) with the full-probability event in
\eqref{skorokhod}, we may assume that, for every
\(\omega\in\Omega_0\), the preceding local uniform convergences
hold and, for Lebesgue-a.e. \(s\in[0,T]\), \(\widetilde Q_s(\omega)\neq\mathbf 0\) or \(\widetilde W_s(\omega)
\notin\mathcal N_{R,\delta}\).
Fix \(\omega\in\Omega_0\) and such a time \(s\). If
\(\widetilde Q_s(\omega)\neq\mathbf 0\), by \eqref{Qnlocally}, for all sufficiently large \(n\),
\[|\widetilde Q_s^{(n)}(\omega)|\ge\frac{|\widetilde Q_s(\omega)|}{2}>\frac{2}{n}.\]
If instead \(\widetilde W_s(\omega)
\notin\mathcal N_{R,\delta}\),
then the closedness of
\(\mathcal N_{R,\delta}\) gives
\(\operatorname{dist}(\widetilde W_s(\omega),
\mathcal N_{R,\delta})>0\). Hence
\eqref{Wnlocally} implies that \(\widetilde W_s^{(n)}(\omega)\notin\mathcal N_{R,\delta}\) for all sufficiently large \(n\). Consequently,
\[\lim_{n\to\infty}\mathbf 1_{\{|\widetilde Q_s^{(n)}|<\frac{2}{n}\}}\mathbf 1_{\{\widetilde W_s^{(n)}\in\mathcal N_{R,\delta}\}}=0\]
for \(\mathrm ds\otimes\mathbb P^*\)-almost every
\((s,\omega)\in[0,T]\times\Omega^*\).
Since the integrand is bounded by \(1\), the dominated convergence theorem yields
\begin{equation}\label{eq:nearzero-skorokhod}
\lim_{n\to\infty}\mathbb E_{\mathbb P^*}\int_0^T\mathbf 1_{\{|\widetilde Q_s^{(n)}|<\frac{2}{n}\}}\mathbf 1_{\{\widetilde W_s^{(n)}\in\mathcal N_{R,\delta}\}}\,\mathrm ds=0.
\end{equation}
For \(\omega\in\Xi_2\), define
\[\mathcal O_n(\omega):=\int_0^T\mathbf 1_{\{|q(\omega(s))|<\frac{2}{n}\}}
\mathbf 1_{\{\omega(s)\in\mathcal N_{R,\delta}\}}\,\mathrm ds.\]
Since
\(\operatorname{Law}_{\mathbb P^*}(\widetilde W^{(n)})
=\mathbf Q_n\), \eqref{eq:nearzero-skorokhod} is equivalent to
\begin{equation}\label{path-law}
\lim_{n\to\infty}\mathbb E_{\mathbb P^*}\bigl[\mathcal O_n(\widetilde W^{(n)})\bigr]=\lim_{n\to\infty}\int_{\Xi_2}\int_0^T\mathbf 1_{\{|q(\omega(s))|<\frac{2}{n}\}}\mathbf 1_{\{\omega(s)\in\mathcal N_{R,\delta}\}}\,\mathrm ds\,\mathbf Q_n(\mathrm d\omega)=0.
\end{equation}
By the definition of \(\mathcal B_{R,\delta}^{(n)}\),
\(\mathbf 1_{\mathcal B_{R,\delta}^{(n)}}(s)\leq
\mathbf 1_{\{W_s^{(n)}\in\mathcal N_{R,\delta}\}}\),
and hence \eqref{path-law} implies
\[\lim_{n\to\infty}\mathbb E_{\mathbf Q_n}\int_0^T\mathbf 1_{\{|Q_s^{(n)}|<\frac{2}{n}\}}\mathbf 1_{\mathcal B_{R,\delta}^{(n)}}(s)\,\mathrm ds=0.\]

\medskip
\noindent
\textbf{Step 2.}
We now derive the contraction estimate from
Lemmas \ref{Le1n}-\ref{lemrde}.
All expectations below are taken with respect to the probability
measure supporting the \(n\)-th regularized coupling. Since
\(\mathbf Q_n\) is its path-space law, the estimate established in
Step~1 applies to the corresponding path functional.

Combining Lemma \ref{Le1n} with Lemma \ref{lemrde}, we obtain
\begin{equation}\label{rho-global}
\mathrm d\rho_t^{(n)}\le-c_{n,\beta}\rho_t^{(n)}\,\mathrm dt
+\zeta_R(\delta)\,\mathrm dt+\eta_{7,R,\delta} n^{-\beta}\,\mathrm dt+\eta_{7,R,\delta}\mathbf 1_{\{|Q_t^{(n)}|<\frac{2}{n}\}}\mathbf 1_{\mathcal B_{R,\delta}^{(n)}}(t)\,\mathrm dt+\mathrm dM_t^{\rho,(n)},
\end{equation}
where \(M^{\rho,(n)}\) is the continuous local martingale appearing in Lemma \ref{Le1n}. For \(m\ge1\), define the stopping time
\[\tau_m:=\inf\left\{s\ge0:\rho_s^{(n)}+\langle M^{\rho,(n)}\rangle_s\ge m\right\}\wedge m.\]
Then \(\langle M^{\rho,(n)}\rangle_{t\wedge\tau_m}
\le m\) for all \(t\ge0\). Hence
\(M_{\cdot\wedge\tau_m}^{\rho,(n)}\) is a
square-integrable martingale.
Applying the integrating factor \(\mathrm e^{c_{n,\beta}t}\) to \eqref{rho-global} and integrating up to \(t\wedge\tau_m\), we obtain
\begin{equation}\label{stopped-exp}
\begin{aligned}
\mathbb E\left[\mathrm e^{c_{n,\beta}(t\wedge\tau_m)}\rho_{t\wedge\tau_m}^{(n)}\right]
&\leq\rho_0^{(n)}+\mathbb E\int_0^{t\wedge\tau_m}\mathrm e^{c_{n,\beta}s}
\Bigl[\zeta_R(\delta)+\eta_{7,R,\delta}n^{-\beta}\\
&\hspace{2.3cm}+\eta_{7,R,\delta}\mathbf 1_{\{|Q_s^{(n)}|<\frac{2}{n}\}}
\mathbf 1_{\mathcal B_{R,\delta}^{(n)}}(s)\Bigr]\,\mathrm ds.
\end{aligned}
\end{equation}
We have \(\lim_{m\to\infty}\tau_m=\infty\) almost surely. Fatou's lemma on
the left-hand side of \eqref{stopped-exp} and dominated convergence on its
right-hand side yield
\begin{align*}
\mathbb E\rho_t^{(n)}
\le \mathrm e^{-c_{n,\beta}t}\rho_0^{(n)}
+\int_0^t\mathrm e^{-c_{n,\beta}(t-s)}\Bigl[\zeta_R(\delta)
+\eta_{7,R,\delta} n^{-\beta}+\eta_{7,R,\delta}\mathbb E\left(
\mathbf 1_{\{|Q_s^{(n)}|<\frac{2}{n}\}}
\mathbf 1_{\mathcal B_{R,\delta}^{(n)}}(s)\right)\Bigr]\mathrm ds.
\end{align*}
Since the exponential kernel is bounded by \(1\), Fubini's theorem gives the estimate
\begin{equation}\label{expectation}
\mathbb E\rho_t^{(n)}\le
\mathrm e^{-c_{n,\beta}t}\rho_0^{(n)}
+t\zeta_R(\delta)+t\eta_{7,R,\delta} n^{-\beta}+\eta_{7,R,\delta}
\mathbb E\int_0^t\mathbf 1_{\{|Q_s^{(n)}|<\frac{2}{n}\}}
\mathbf 1_{\mathcal B_{R,\delta}^{(n)}}(s)\mathrm ds.
\end{equation}
By Step \(1\), the final summand in \eqref{expectation} satisfies
\[\lim_{n\to\infty}\eta_{7,R,\delta}\mathbb E\int_0^t\mathbf 1_{\{|Q_s^{(n)}|<\frac{2}{n}\}}\mathbf 1_{\mathcal B_{R,\delta}^{(n)}}(s)\,\mathrm ds=0.\]
Moreover, \(\lim_{n\to\infty}\alpha_{n,\beta}=1\), and therefore \(\lim_{n\to\infty}c_{n,\beta}=c:=1\wedge\eta_{5,R}\).
Since the initial conditions do not depend on \(n\), \(\rho_0^{(n)}=\rho_R\bigl((x,v),(x',v')\bigr)\).
Consequently, taking the upper limit in \(n\) in \eqref{expectation}, we obtain
\begin{equation}\label{limsup-rho}
\limsup_{n\to\infty}\mathbb E\rho_t^{(n)}\le\mathrm e^{-ct}\rho_R\bigl((x,v),(x',v')\bigr)+t\zeta_R(\delta).
\end{equation}
For every \(n\), the time-\(t\) law of the regularized coupling is a coupling
of \(\delta_{(x,v)}P_t\) and \(\delta_{(x',v')}P_t\), by Theorem
\ref{thwcc}. Hence
\[\mathcal W_{\rho_R}\bigl(\delta_{(x,v)}P_t,
\delta_{(x',v')}P_t\bigr)\le\inf_{n\ge2}\mathbb E\rho_t^{(n)}
\le\limsup_{n\to\infty}\mathbb E\rho_t^{(n)}.\]
Combining this with \eqref{limsup-rho} gives
\[\mathcal W_{\rho_R}\bigl(\delta_{(x,v)}P_t,\delta_{(x',v')}P_t
\bigr)\le\mathrm e^{-ct}\rho_R\bigl((x,v),(x',v')\bigr)+t\zeta_R(\delta).\]
Finally, letting \(\delta\downarrow0\) and using \(\lim_{\delta\downarrow0}\zeta_R(\delta)=0\), we obtain
\begin{equation}\label{dirac}
\mathcal W_{\rho_R}\bigl(\delta_{(x,v)}P_t,\delta_{(x',v')}P_t\bigr)\le\mathrm e^{-ct}\rho_R\bigl((x,v),(x',v')\bigr).
\end{equation}

Let \(\pi\in\Pi(\mu,\nu)\). Proposition \ref{pr4.1} gives \((P_tF)(\xi)\leq F(\xi)+C_5t\). Since \(0\leq f_R\leq f_R(\eta_{4,R})<\infty\), the independent coupling of \(\delta_\xi P_t\) and \(\delta_{\xi'}P_t\) has finite \(\rho_R\)-cost:
\[\int_{E\times E}\rho_R(\bar\xi,\bar\xi')\,(\delta_\xi P_t)(\d\bar\xi)(\delta_{\xi'}P_t)(\d\bar\xi')\leq f_R(\eta_{4,R})\left[1+\varepsilon_R(F(\xi)+F(\xi')+2C_5t)\right]<\infty.\]
Moreover, \(E\) is Polish and \(\rho_R\) is continuous. Local Lipschitz continuity of the drift and global nonexplosion imply, by localization, continuous dependence in probability on the initial condition. Hence \(\xi\mapsto\delta_\xi P_t\) is a Borel probability kernel. The measurable optimal-coupling selection theorem \cite[Corollary~5.22]{VillaniOT2009} therefore gives a Borel measurable
family \(\{\Gamma_{\xi,\xi'}\}_{(\xi,\xi')\in E\times E}\) such that
\(\Gamma_{\xi,\xi'}\) is an optimal coupling of \(\delta_\xi P_t\) and
\(\delta_{\xi'}P_t\). The probability measure
\[\widetilde\pi(A):=\int_{E\times E}\Gamma_{\xi,\xi'}(A)\,\pi(\mathrm d\xi,\mathrm d\xi'),
\qquad A\in\mathcal B(E\times E),\]
is therefore a coupling of \(\mu P_t\) and \(\nu P_t\). By \eqref{dirac},
\begin{align*}
\mathcal W_{\rho_R}(\mu P_t,\nu P_t)\le\int_{E\times E}
\mathcal W_{\rho_R}(\delta_\xi P_t,\delta_{\xi'}P_t)\pi(\mathrm d\xi,\mathrm d\xi')\le
\mathrm e^{-ct}\int_{E\times E}\rho_R(\xi,\xi')\pi(\mathrm d\xi,\mathrm d\xi').
\end{align*}
Taking the infimum over \(\pi\in\Pi(\mu,\nu)\) yields
\[\mathcal W_{\rho_R}(\mu P_t,\nu P_t)\le\mathrm e^{-ct}\mathcal W_{\rho_R}(\mu,\nu).\]
If \(\mathcal W_{\rho_R}(\mu,\nu)=\infty\), the assertion is immediate. \hfill$\square$

{\bf Proof of Corollary \ref{Co1}.} 
The measure \(\mu_*\) is invariant, so \(\mu_*P_t=\mu_*\). The first assertion
follows from Theorem \ref{Th} with \(\nu=\mu_*\). If \(\nu\) is another
invariant probability measure with \(\mathcal W_{\rho_R}(\nu,\mu_*)<\infty\), then
\[\mathcal W_{\rho_R}(\nu,\mu_*)
\leq\mathrm e^{-ct}\mathcal W_{\rho_R}(\nu,\mu_*),
\qquad t>0.\]
Hence \(\mathcal W_{\rho_R}(\nu,\mu_*)=0\). Because \(E\) is Polish
and \(\rho_R\) is continuous and nonnegative, the infimum in the
definition of \(\mathcal W_{\rho_R}\) is attained whenever it is finite.
Let \(\Gamma_*\) be an optimal coupling. Then
\(\int\rho_R\,\mathrm d\Gamma_*=0\), so
\(\rho_R=0\) \(\Gamma_*\)-almost surely. Since \(\rho_R\) vanishes
only on the diagonal, \(\Gamma_*\) is supported on the diagonal. Its two
marginals are equal, and therefore \(\nu=\mu_*\). \hfill$\square$

\section{Bistable Duffing Oscillator}\label{Sec6}
This section verifies the critical-growth assumptions for a nonconvex physical model and then illustrates three consequences of its relaxation toward equilibrium.

The bistable Duffing oscillator is a standard nonlinear model with two stable configurations separated by an energy barrier. Its noisy form is a canonical setting for stochastic resonance \cite{Gammaitoni1998}; closely related bistable oscillators are also used to model broadband piezoelectric vibration-energy harvesters \cite{Cottone2009}. We consider the unforced thermal dynamics
\begin{equation}\label{duffing-sde}
\d X_t=V_t\,\d t,\qquad \d V_t=-V_t\,\d t+(2X_t-X_t^3)\,\d t+\sqrt2\,\d B_t,
\end{equation}
which is \eqref{eq4.1} with \(d=1\), \(\gamma=1\), and
\begin{equation}\label{duffing-potential}U(x)=\frac{1}{4}(x^2-2)^2,\qquad U'(x)=x^3-2x.\end{equation}
The potential is nonconvex because \(U''(0)=-2\); its minima are at \(x=\pm\sqrt2\), and the barrier height is \(U(0)-U(\pm\sqrt2)=1\). Thus \eqref{duffing-sde} describes thermally activated motion between two stable configurations.

The potential \eqref{duffing-potential} satisfies Assumptions \ref{as4.0}--\ref{as4.4} with
\[\ell=2,\quad L_1=2,\quad L_2=1,\quad K_1=\sqrt3,\qquad L_3=4,\quad K_2=4,\quad L_4=\frac18,\quad K_3=1.\]
Indeed, \(U\geq0\), \(\lim_{|x|\to\infty}U(x)=\infty\), and \(\int_{\mathbb R}\mathrm e^{-U(x)}\,\d x<\infty\), so Assumption \ref{as4.0} holds. The identity
\[|U'(x)-U'(y)|=|x-y|\,|x^2+xy+y^2-2|\leq2(1+x^2+y^2)|x-y|\]
verifies Assumption \ref{as4.1} with \(\ell=2\) and \(L_1=2\). Since \(xU'(x)=x^4-2x^2\geq x^2\) whenever \(|x|\geq\sqrt3\), Assumption \ref{as4.2} holds with \(L_2=1>\gamma^2/4\) and \(K_1=\sqrt3\). Moreover, \(xU'(x)=4U(x)+2x^2-4\geq4U(x)-4\), which gives Assumption \ref{as4.3} with \(L_3=4\) and \(K_2=4\). Finally,
\[U(x)-\left(\frac18|x|^4-1\right)=\frac18(x^2-4)^2\geq0.\]
Thus Assumption \ref{as4.4} and the \(\ell=2\) branch of \eqref{growth-regime} hold, so Theorem \ref{Th} and Corollary \ref{Co1} apply to \eqref{duffing-sde} with \(L_1=2\).

We next illustrate the dynamics numerically. All simulations were performed in MATLAB with random seed \(20260831\). We used the BAOAB splitting method \cite{LeimkuhlerMatthews2013JCP,LeimkuhlerMatthews2013AMRX}. In the unit-friction, unit-temperature normalization of \eqref{duffing-sde}, for a time step \(h\), Duffing force \(\mathfrak f_{\rm D}(x):=2x-x^3\), and independent \(\varsigma_n\sim N(0,1)\), one step is
\begin{equation*}
\begin{aligned}
v^{B}&=v_n+\frac h2\mathfrak f_{\rm D}(x_n),&x^{A}&=x_n+\frac h2v^{B},&v^{O}&=\mathrm e^{-h}v^{B}+\sqrt{1-\mathrm e^{-2h}}\,\varsigma_n,\\
x_{n+1}&=x^{A}+\frac h2v^{O},&v_{n+1}&=v^{O}+\frac h2\mathfrak f_{\rm D}(x_{n+1}).&&
\end{aligned}
\end{equation*}
Define the Duffing partition functions
\[\mathfrak Z_{\rm D}:=\int_{\mathbb R^2}\exp\left\{-U(x)-\frac{1}{2}v^2\right\}\,\d x\,\d v,\qquad \mathfrak Z_{\rm pos}:=\int_{\mathbb R}\mathrm e^{-U(x)}\,\d x.\]
The exact invariant density of \eqref{duffing-sde} is
\begin{equation}\label{duffing-invariant}p_{\rm D}(x,v):=\mathfrak Z_{\rm D}^{-1}\exp\left\{-\frac{1}{4}(x^2-2)^2-\frac{1}{2}v^2\right\}.\end{equation}

\begin{figure}[H]
\centering
\includegraphics[width=\textwidth]{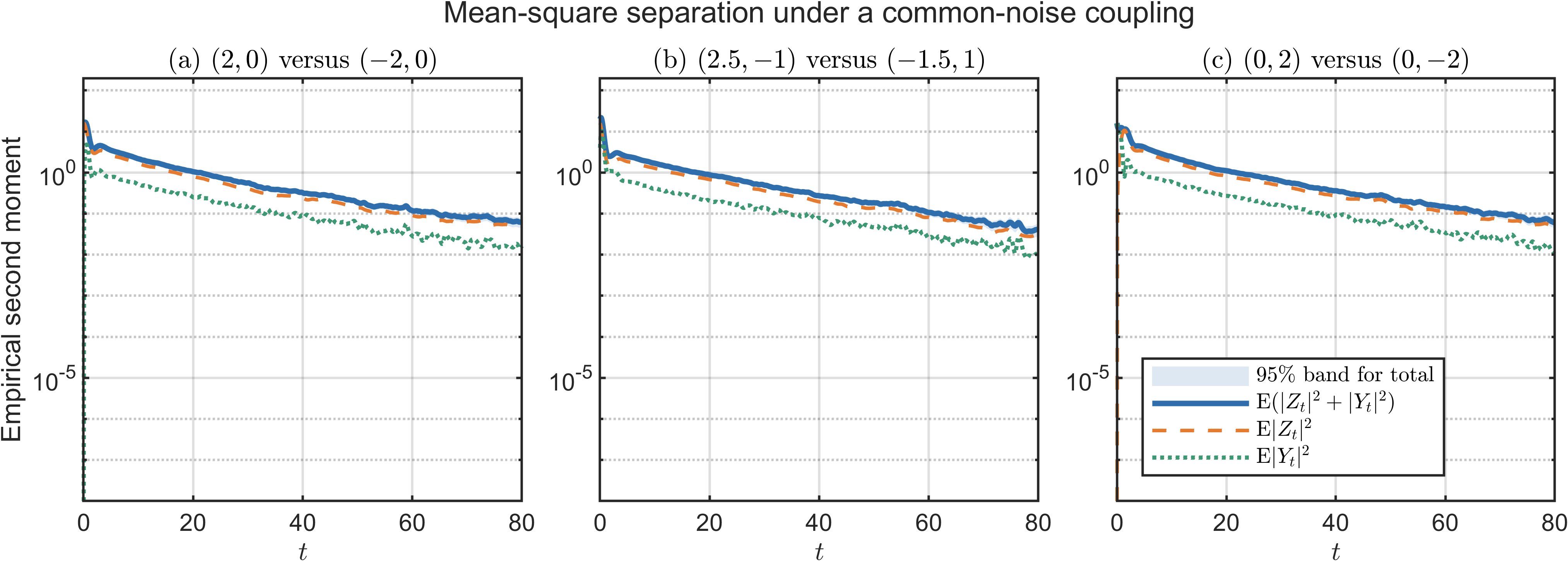}
\caption{Empirical second moments of the phase-space separation under a common-noise BAOAB coupling. The three initial pairs are \((2,0),(-2,0)\), \((2.5,-1),(-1.5,1)\), and \((0,2),(0,-2)\). We used \(h=5\times10^{-3}\), \(T=80\), and \(5000\) paired paths, with the same Gaussian increment in the two copies at every step. Solid, dashed, and dotted curves show \(\mathbb E(|Z_t|^2+|Y_t|^2)\), \(\mathbb E|Z_t|^2\), and \(\mathbb E|Y_t|^2\), respectively; the shaded region is the pointwise \(95\%\) Monte Carlo confidence band for the total second moment.}
\label{fig:duffing-coupling}
\end{figure}
Figure~\ref{fig:duffing-coupling} reports the empirical second moments of the total phase-space separation and its position and velocity components for three initial pairs under common-noise BAOAB coupling. The solid, dashed, and dotted curves represent \(\mathbb E(|Z_t|^2+|Y_t|^2)\), \(\mathbb E|Z_t|^2\), and \(\mathbb E|Y_t|^2\), respectively. The shaded region gives the pointwise \(95\%\) Monte Carlo confidence band for the total second moment. For every initial pair, all three moments decrease substantially, including for trajectories initialized in opposite potential wells. The slower intermediate decay corresponds to the inter-well transition. This common decay reveals a progressive loss of dependence on the initial state. It is consistent with the convergence mechanism in Theorem~\ref{Th}, but it does not estimate the theoretical contraction rate. The theorem controls the optimized weighted cost \(\mathcal W_{\rho_R}\), whereas the figure uses one prescribed coupling and the Euclidean squared distance.
\begin{figure}[H]
\centering
\includegraphics[width=\textwidth]{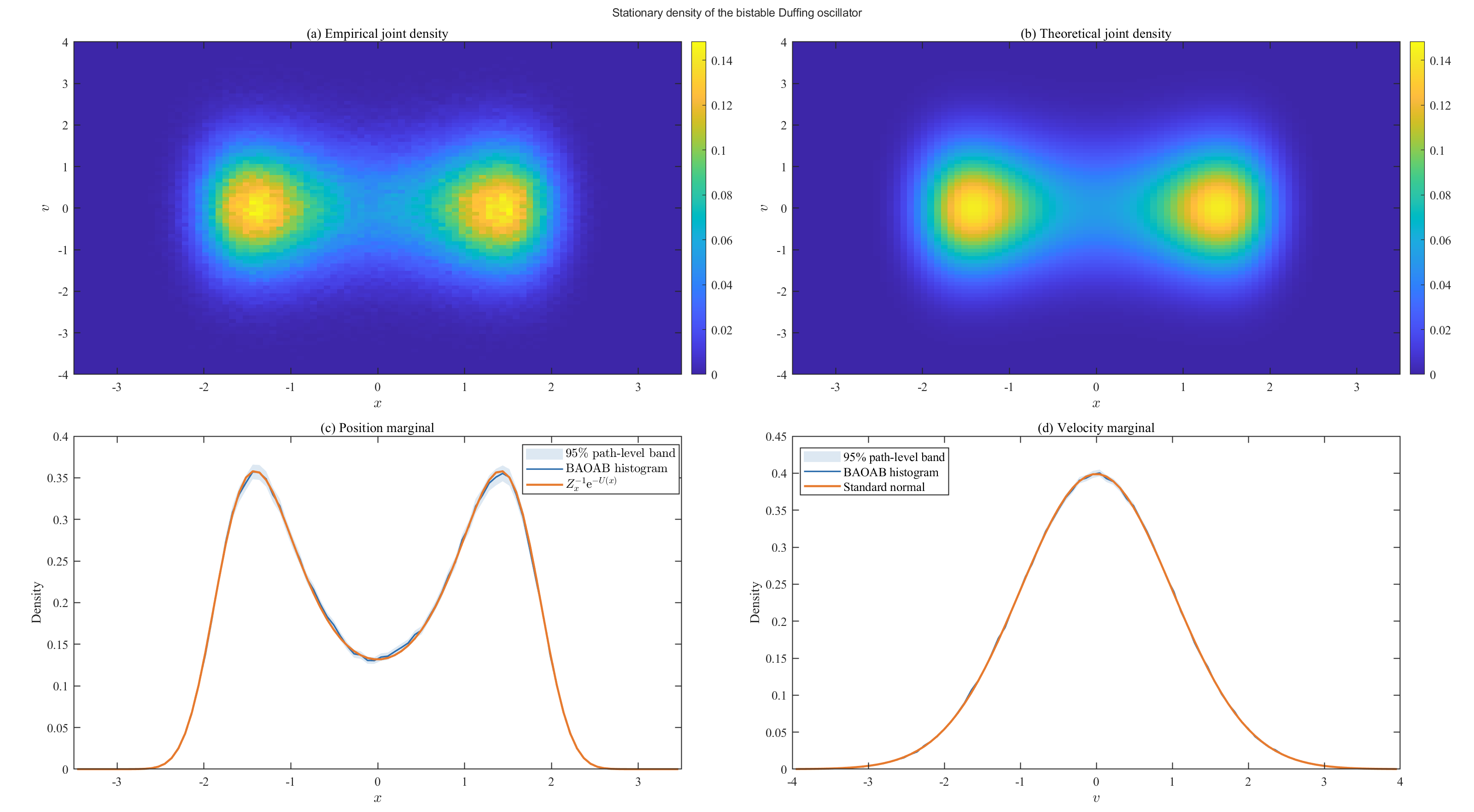}
\caption{Empirical and theoretical stationary densities. We ran \(200\) independent BAOAB paths from \((X_0,V_0)=(2.5,0)\) with \(h=5\times10^{-3}\) and \(T=600\), discarded \([0,100]\), and retained one value every \(0.1\) time units, giving \(10^6\) correlated samples. Panels (a) and (b), displayed with the same color scale, compare the empirical and theoretical joint densities. Panels (c) and (d) compare the marginal histograms with their theoretical densities; the shaded regions are path-level \(95\%\) Monte Carlo confidence bands.}
\label{fig:duffing-density}
\end{figure}
Figure~\ref{fig:duffing-density} compares the empirical BAOAB stationary joint density and marginal histograms with the exact Gibbs density in \eqref{duffing-invariant}. Panels (a) and (b) use the same color scale, while panels (c) and (d) include path-level \(95\%\) Monte Carlo confidence bands. The empirical joint density has two modes near \((\pm\sqrt2,0)\). The position marginal is bimodal with reduced density near the barrier at \(x=0\), whereas the velocity marginal is approximately standard normal. The empirical marginal curves closely follow their theoretical counterparts throughout the displayed ranges. The effective sample sizes were \(1.24\times10^4\) for position and \(1.21\times10^5\) for velocity. The three-step \(L^1\)-error study remained small but nonmonotone, supporting qualitative stability rather than an asymptotic discretization rate. Figure~\ref{fig:duffing-density} therefore directly illustrates the Gibbs equilibrium identified in Corollary~\ref{Co1}; invariance and finite-cost uniqueness remain analytic consequences of that corollary.

\begin{figure}[H]
\centering
\includegraphics[width=\textwidth]{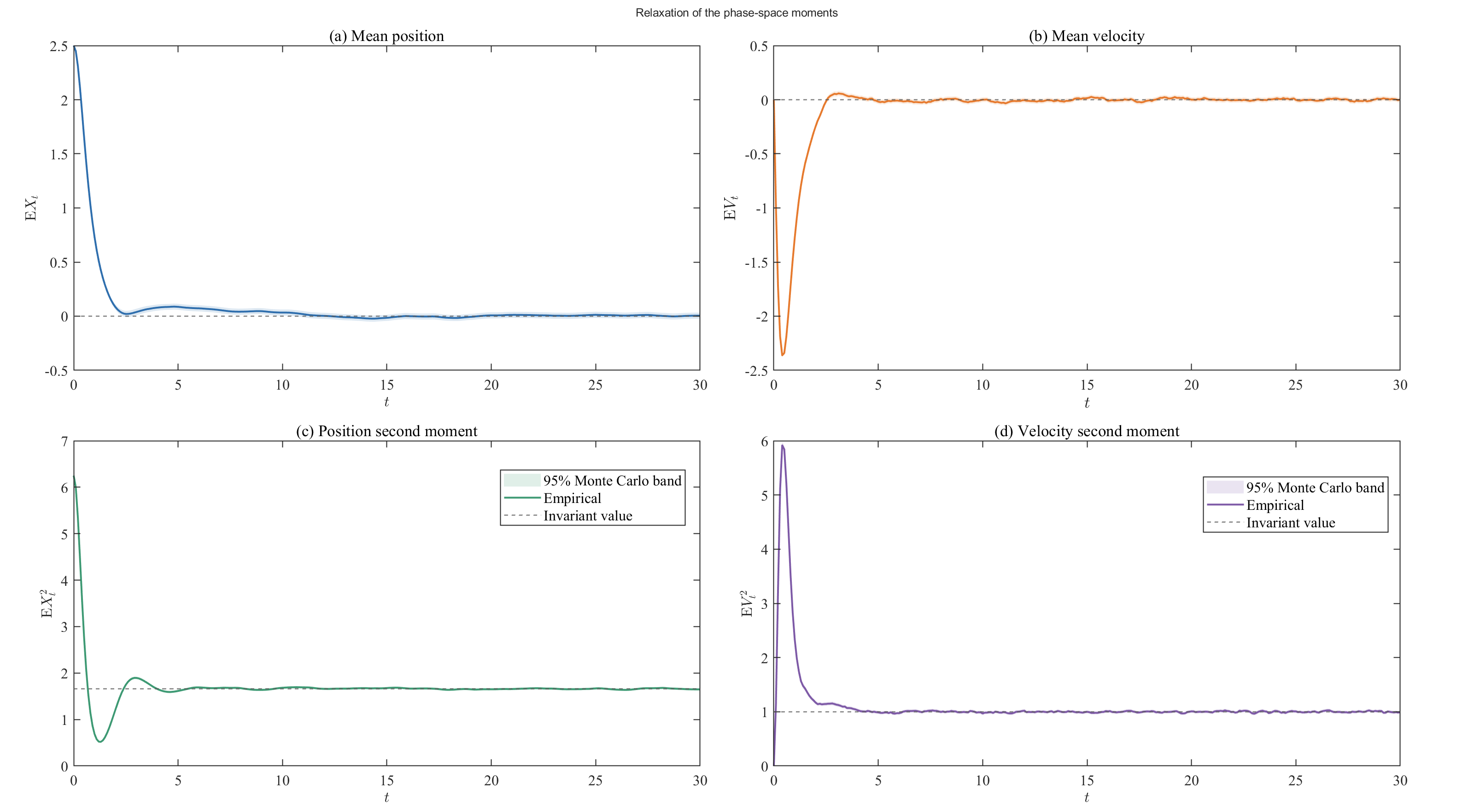}
\caption{Relaxation of ensemble moments from \((X_0,V_0)=(2.5,0)\). We used \(8000\) independent paths, \(h=5\times10^{-3}\), \(T=30\), and recorded one value every \(0.1\) time units. The four panels show \(\mathbb E X_t\), \(\mathbb E V_t\), \(\mathbb E X_t^2\), and \(\mathbb E V_t^2\); dashed lines are the invariant values \(0,0,\mathfrak Z_{\rm pos}^{-1}\int_{\mathbb R}x^2\mathrm e^{-U(x)}\,\d x\), and \(1\), respectively, and the shaded regions are pointwise \(95\%\) Monte Carlo confidence bands.}
\label{fig:duffing-moments}
\end{figure}
Figure~\ref{fig:duffing-moments} shows the ensemble means and second moments of position and velocity for \(8000\) paths initialized at \((X_0,V_0)=(2.5,0)\). The shaded regions are pointwise \(95\%\) Monte Carlo confidence bands, and the horizontal dashed lines mark the corresponding invariant values. After the initial transient, both means approach zero, while \(\mathbb E X_t^2\) and \(\mathbb E V_t^2\) approach \(1.66549\) and \(1\), respectively. These four observables therefore relax simultaneously toward their Gibbs expectations. This behavior is consistent with Corollary~\ref{Co1} and the Lyapunov moment estimates used in its proof; the selected observables do not determine the theoretical contraction rate.

\end{document}